\newtheorem{theorem}{Theorem}[subsection]
\theoremstyle{plain} \numberwithin{equation}{subsection}
\newtheorem{corollary}[theorem]{Corollary}
\newtheorem{definition}[theorem]{Definition}
\newtheorem{lemma}[theorem]{Lemma}
\newtheorem{problem}[]{Problem}
\newtheorem{proposition}[theorem]{Proposition}
\newtheorem{remark}[theorem]{Remark}
\begin{document}
\title[Ramified transportation in metric spaces]{Ramified optimal
transportation in geodesic metric spaces}
\author{Qinglan Xia}
\address{University of California at Davis\\
Department of Mathematics\\
Davis,CA,95616}
\email{qlxia@math.ucdavis.edu}
\urladdr{http://math.ucdavis.edu/\symbol{126}qlxia}
\subjclass[2000]{Primary 49Q20, 51Kxx; Secondary 28E05, 90B06}
\keywords{optimal transport path, branching structure, dimension of
measures, doubling space, curvature}
\thanks{This work is supported by an NSF grant DMS-0710714.}
\maketitle

\begin{abstract}
An optimal transport path may be viewed as a geodesic in the space of
probability measures under a suitable family of metrics. This geodesic may
exhibit a tree-shaped branching structure in many applications such as
trees, blood vessels, draining and irrigation systems. Here, we extend the
study of ramified optimal transportation between probability measures from
Euclidean spaces to a geodesic metric space. We investigate the existence as
well as the behavior of optimal transport paths under various properties of
the metric such as completeness, doubling, or curvature upper boundedness.
We also introduce the transport dimension of a probability measure on a
complete geodesic metric space, and show that the transport dimension of a
probability measure is bounded above by the Minkowski dimension and below by
the Hausdorff dimension of the measure. Moreover, we introduce a metric,
called ``the dimensional distance", on the space of probability measures.
This metric gives a geometric meaning to the transport dimension: with
respect to this metric, the transport dimension of a probability measure
equals to the distance from it to any finite atomic probability measure.

\end{abstract}

The optimal transportation problem aims at finding an optimal way to
transport a given measure into another with the same mass. In contrast to
the well-known Monge-Kantorovich problem (e.g. \cite{Ambrosio}, \cite%
{Brenier}, \cite{caffarelli}, \cite{evan2}, \cite{mccann}, \cite{kantorovich}%
, \cite{monge}, \cite{villani}), the ramified optimal transportation problem
aims at modeling a branching transport network by an optimal transport path
between two given probability measures. An essential feature of such a
transport path is to favor transportation in groups via a nonlinear
(typically concave) cost function on mass. Transport networks with branching
structures are observable not only in nature as in trees, blood vessels,
river channel networks, lightning, etc. but also in efficiently designed
transport systems such as used in railway configurations and postage
delivery networks. Several different approaches have been done on the
ramified optimal transportation problem in Euclidean spaces, see for
instance \cite{gilbert}, \cite{xia1}, \cite{msm}, \cite{xia2}, \cite{xia3}, %
\cite{BCM}, \cite{buttazzo}, \cite{xia4}, \cite{Solimini}, \cite{book}, \cite%
{xia5}, and \cite{xia6}. Related works on flat chains may be found in \cite%
{white}, \cite{DH}, \cite{xia2} and \cite{paolini}.

This article aims at extending the study of ramified optimal transportation
from Euclidean spaces to metric spaces. Such generalization is not only
mathematically nature but also may be useful for considering specific
examples of metric spaces later. By exploring various properties of the
metric, we show that many results about ramified optimal transportation is
not limited to Euclidean spaces, but can be extended to metric spaces with
suitable properties on the metric. Some results that we prove in this
article are summarized here:

When $X$ is a geodesic metric space, we define a family of metrics $%
d_{\alpha }$ on the space $\mathcal{A}\left( X\right) $ of atomic
probability measures on $X$ for a (possibly negative) parameter
$\alpha <1$. The space $\left( \mathcal{A}\left( X\right) ,d_{\alpha
}\right) $ is still a geodesic metric space when $0\leq \alpha <1$.
A geodesic, also called an optimal transport path, in this space is
a weighted directed graph whose edges are geodesic segments.

Moreover, when $X$ is a geodesic metric space of curvature bounded above, we
find in \S 2, a universal lower bound depending only on the parameter $%
\alpha $ for each comparison angle between edges of any optimal transport
path. If in addition $X$ is a doubling metric space, we show that the degree
of any vertex of an optimal transport path in $X$ is bounded above by a
constant depending only on $\alpha $ and the doubling constant of $X$. On
the other hand, we also provide a lower bound of the curvature of $X$ by a
quantity related to the degree of vertices.

Furthermore, when $X$ is a complete geodesic metric space, we consider
optimal transportation between any two probability measures on $X$ by
considering the completion of the metric space $\left( \mathcal{A}\left(
X\right) ,d_{\alpha }\right) $. A geodesic, if it exists, in the completed
metric space is viewed as an $\alpha -$optimal transport path between
measures. The existence of an optimal transport path is closely related to
the dimensional information of the measures. As a result, we consider the
dimension of measures on $X$ by introducing a new concept called the \textit{%
transport dimension} of measures, which is analogous to the irrigational
dimension of measures in Euclidean spaces studied by \cite{Solimini}. We
show in \S 4.2.3 and 4.3.4 that the transport dimension of a measure is
bounded below by its Hausdorff dimension and above by its Minkowski
dimension. Furthermore, we show that the transport dimension has an
interesting geometric meaning: under a metric (called the \textit{%
dimensional distance}), the transport dimension of a probability measure
equals to the distance from it to any atomic probability measure.

In \S 5, when $X$ is a compact geodesic doubling metric space with Assouad
dimension $m$ and the parameter $\alpha >\max \left\{ 1-\frac{1}{m}%
,0\right\} $, then we show that the space $\mathcal{P}\left( X\right) $ of
probability measures on $X$ with respect to $d_{\alpha } $ is a geodesic
metric space. In other words, there exists an $\alpha $-optimal transport
path between any two probability measures on $X$.

\section{The $d_{\protect\alpha }$ metrics on atomic probability measures on
a metric space}

\subsection{Transport paths between atomic measures}

We first extend some basic concepts about transport paths between measures
of equal mass as studied in \cite{xia1}, with some necessary modifications,
from Euclidean spaces to a metric space.

Let $\left( X,d\right) $ be a geodesic metric space. Recall that a (finite,
positive) atomic measure on $X$ is in the form of
\begin{equation}
\mathbf{a=}\sum_{i=1}^{k}m_{i}\delta _{x_{i}}  \label{prob_meas_a}
\end{equation}%
with distinct points $x_{i}\in X$ and positive numbers $m_{i},$ where $%
\delta _{x}$ denotes the Dirac mass located at the point $x$. The measure $%
\mathbf{a}$ is a probability measure if the mass $\sum_{i=1}^{k}m_{i}=1$.
Let $\mathcal{A}(X)$ be the space of all atomic probability measures on $X$.

\begin{definition}
Given two atomic measures
\begin{equation}
\mathbf{a}=\sum_{i=1}^{k}m_{i}\delta _{x_{i}}\text{ and }\mathbf{b}%
=\sum_{j=1}^{\ell}n_{j}\delta _{y_{j}}  \label{prob_meas}
\end{equation}%
on $X$ of the same mass, a \textbf{transport path} from $\mathbf{a}$ to $%
\mathbf{b}$ is a weighted directed acyclic graph $G$ consisting of a vertex
set $V(G)$, a directed edge set $E(G)$ and a weight function $%
w:E(G)\rightarrow (0,+\infty )$ such that $\{x_{1},x_{2},...,x_{k}\}\cup
\{y_{1},y_{2},...,y_{n}\}\subset V(G)$ and for any vertex $v\in V(G)$, there
is a balance equation
\begin{equation}
\sum_{e\in E(G),e^{-}=v}w(e)=\sum_{e\in E(G),e^{+}=v}w(e)+\left\{
\begin{array}{c}
m_{i},\text{\ if }v=x_{i}\text{\ for some }i=1,...,k \\
-n_{j},\text{\ if }v=y_{j}\text{\ for some }j=1,...,n \\
0,\text{\ otherwise }%
\end{array}%
\right.  \label{path}
\end{equation}%
where each edge $e\in E\left( G\right) $ is a geodesic segment in $X$ from
the starting endpoint $e^{-}$ to the ending endpoint $e^{+}$.
\end{definition}

Note that the balance equation (\ref{path}) simply means the conservation of
mass at each vertex. In terms of polyhedral chains, we simply have $\partial
G=b-a$.

Here, a directed graph $G$ is called \textit{acyclic} if it contains no
directed cycles in the sense that for any vertex $v\in V\left( G\right) $,
there does not exist a list of vertices $\left\{ v_{1},v_{2,}\cdots
,v_{k}\right\} $ such that $v_{1}=v_{k}=v$ and $\left[ v_{i},v_{i+1}\right] $
is a directed edge in $E\left( G\right) $ for each $i=1,2,\cdots ,k-1$.
Reasons for introducing this constraint were given in \cite[Remark 2.1.5]%
{xia6}.

For any two atomic measures $\mathbf{a}$ and $\mathbf{b}$ on $X$ of equal
mass, let \textit{Path}$(\mathbf{a},\mathbf{b})$ be the space of all
transport paths from $\mathbf{a}$ to $\mathbf{b}$. Now, we define the
transport cost for each transport path as follows.

\begin{definition}
For any real number $\alpha \in (-\infty,1]$ and any transport path $G\in \text{Path}(%
\mathbf{a},\mathbf{b})$, we define
\begin{equation*}
\mathbf{M}_{\alpha }(G):=\sum_{e\in E(G)}w(e)^{\alpha }\text{length}(e).
\end{equation*}
\end{definition}

We now consider the following optimal transport problem:

\begin{problem}
Given two atomic measures $\mathbf{a}$ and $\mathbf{b}$ of equal mass on a
geodesic metric space $X$, find a minimizer of
\begin{equation*}
\mathbf{M}_{\alpha }(G)
\end{equation*}%
among all transport paths $G\in Path\left( \mathbf{a},\mathbf{b}\right)$.
\end{problem}

An $\mathbf{M}_{\alpha }$ minimizer in $Path(\mathbf{a},\mathbf{b})$ is
called an $\alpha -$optimal transport path from $\mathbf{a}$ to $\mathbf{b}$.

\subsection{The $d_{\protect\alpha }$ metrics}

\begin{definition}
\label{d_alpha}For any $\alpha \in (-\infty,1]$, we define
\begin{equation*}
d_{\alpha }\left( \mathbf{a},\mathbf{b}\right) =\inf \left\{ \mathbf{M}%
_{\alpha }\left( G\right) :G\in Path\left( \mathbf{a},\mathbf{b}\right)
\right\}
\end{equation*}%
for any $\mathbf{a},\mathbf{b\in }\mathcal{A}(X)$.
\end{definition}

\begin{remark}
Let $\mathbf{\bar{a}}$ and $\mathbf{\bar{b}}$ be two atomic measures
of equal mass $\Lambda>0 $, and let $\mathbf{a}=\frac{1}{\Lambda
}\mathbf{\bar{a}} $ and $\mathbf{b}=\frac{1}{\Lambda
}\mathbf{\bar{b}}$ be the normalization
of $\mathbf{\bar{a}}$ and $\mathbf{\bar{b}}$. Then, for any transport path $%
\bar{G}\in Path\left( \mathbf{\bar{a}},\mathbf{\bar{b}}\right) $, we have $%
G=\left\{ V\left( \bar{G}\right) ,E\left( \bar{G}\right) ,\frac{1}{\Lambda }%
w\right\} $ is a transport path from $\mathbf{a}$ to $\mathbf{b}$ with $%
\mathbf{M}_{\alpha }(\bar{G})=\Lambda ^{\alpha }\mathbf{M}_{\alpha }(G)$.
Thus, we also set $d_{\alpha }\left( \mathbf{\bar{a}},\mathbf{\bar{b}}%
\right) =\Lambda ^{\alpha }d_{\alpha }\left( \mathbf{a},\mathbf{b}\right) .$
\end{remark}

It is easy to see that $d_{\alpha }$ is a metric on $\mathcal{A}(X)$ when $%
0\leq \alpha \leq 1$. But to show that $d_{\alpha }$ is still a metric when $%
\alpha <0$, we need some estimates on the lower bound of $d_{\alpha }\left(
\mathbf{a},\mathbf{b}\right) $ when $\mathbf{a}\neq \mathbf{b}$.

We denote $S\left( p,r\right) $ (and $\bar{B}\left( p,r\right)$,
respectively) to be the sphere (and the closed ball, respectively) centered
at $p\in X$ of radius $r>0$. Note that for any transport path $G$, the
restriction of $G$ on any closed ball $\bar{B}\left( p,r_{0}\right) $ gives
a transport path $G|_{\bar{B}\left( p,r_{0}\right) }$ between the
restriction of measures.

\begin{lemma}
\label{estimate}Suppose $\mathbf{a}$ and $\mathbf{b}$ are two atomic
measures on a geodesic metric space $X$ of equal total mass, and $G$ is a
transport path from $\mathbf{a}$ to $\mathbf{b}$. For each $p\in X$, if the
intersection of $G\cap S\left( p,r\right) $ as sets is nonempty for almost
all $r\in \left[ 0,r_{0}\right] $ for some $r_{0}>0$, then
\begin{equation}
\mathbf{M}_{\alpha }(G|_{\bar{B}\left( p,r_{0}\right) })\geq
\int_{0}^{r_{0}}\sum_{e\in \mathbf{E}_{r}}\left[ w(e)\right] ^{\alpha }dr,
\label{key_estimate}
\end{equation}%
where for each $r$, the set%
\begin{equation*}
\mathbf{E}_{r}:=\left\{ e\in E\left( G\right) :e\cap S\left( p,r\right) \neq
\emptyset \right\}
\end{equation*}%
is the family of all edges of $G$ that intersects with the sphere $S\left(
p,r\right) $.
\end{lemma}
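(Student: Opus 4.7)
The plan is to reduce the inequality to a one-dimensional coarea-type estimate applied edge by edge. Fix $p\in X$ and let $e$ be any edge of $G$ meeting $\bar{B}(p,r_{0})$. Since $e$ is a geodesic segment, I parametrize it by arc length as an isometric embedding $\gamma_{e}\colon [0,L_{e}]\to X$ with $L_{e}=\mathrm{length}(e)$, and set $f_{e}(t):=d(p,\gamma_{e}(t))$. The triangle inequality immediately shows that $f_{e}$ is $1$-Lipschitz, hence absolutely continuous with $|f_{e}'|\leq 1$ a.e.

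Next I would identify the relevant quantities intrinsically in terms of $f_{e}$. Writing $E_{e}:=\{t\in [0,L_{e}]:f_{e}(t)\leq r_{0}\}$, one has the Lebesgue-measure identity $\mathrm{length}(e\cap \bar{B}(p,r_{0}))=|E_{e}|$, together with
$$\{r\in [0,r_{0}]:e\in \mathbf{E}_{r}\}=f_{e}(E_{e})=f_{e}([0,L_{e}])\cap [0,r_{0}].$$
The heart of the proof is then the elementary one-dimensional inequality $|f_{e}(E_{e})|\leq |E_{e}|$, valid for any $1$-Lipschitz function (for instance via the area formula $\int_{\mathbb{R}}\#(f_{e}^{-1}(r)\cap E_{e})\,dr=\int_{E_{e}}|f_{e}'(t)|\,dt$ combined with $|f_{e}'|\leq 1$ a.e.). Multiplying both sides by the positive number $w(e)^{\alpha}$ yields
$$w(e)^{\alpha}\,\mathrm{length}(e\cap \bar{B}(p,r_{0}))\;\geq\; \int_{0}^{r_{0}}w(e)^{\alpha}\chi_{\{e\in \mathbf{E}_{r}\}}(r)\,dr.$$

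Finally I would sum this inequality over the (finitely many) edges of $G$ that meet $\bar{B}(p,r_{0})$. The left-hand side is precisely $\mathbf{M}_{\alpha}(G|_{\bar{B}(p,r_{0})})$, while Fubini's theorem---trivial here, since the sum is finite---converts the right-hand side into $\int_{0}^{r_{0}}\sum_{e\in \mathbf{E}_{r}}w(e)^{\alpha}\,dr$, which is exactly (\ref{key_estimate}). I do not anticipate any real obstacle: the hypothesis that $G\cap S(p,r)\neq \emptyset$ for a.e. $r\in [0,r_{0}]$ is not actually used to establish the inequality itself but merely guarantees that the right-hand side is nontrivial (otherwise $\mathbf{E}_{r}$ would be empty for a.a.\ $r$ and the bound would reduce to $0$), and the $1$-Lipschitz coarea bound is a standard elementary fact about absolutely continuous real-valued functions on a compact interval.
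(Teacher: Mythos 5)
Your proof is correct and follows essentially the same route as the paper's: both reduce the estimate to the edge-by-edge observation that the distance function $r\mapsto d(p,\cdot)$ restricted to a geodesic edge is $1$-Lipschitz, so the set of radii $r$ with $e\in\mathbf{E}_{r}$ has Lebesgue measure at most the (relevant portion of the) length of $e$, after which one multiplies by $w(e)^{\alpha}$ and sums. The only cosmetic difference is that the paper notes this set of radii is the interval $\left[\min_{x\in e}d(p,x),\max_{x\in e}d(p,x)\right]$ and bounds its length by the triangle inequality, whereas you invoke the general fact that $1$-Lipschitz maps do not increase measure; your remark that the hypothesis on $G\cap S(p,r)$ is not needed for the inequality itself is also accurate.
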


\begin{proof}
For every edge $e$ of $G$, let $p^{\ast }$ and $p_{\ast }$ be the points on $%
e$ such that
\begin{equation*}
d\left( p,p^{\ast }\right) =\max \left\{ d\left( p,x\right) :x\in e\right\}
\text{ and }d\left( p,p_{\ast }\right) =\min \left\{ d\left( p,x\right)
:x\in e\right\} .
\end{equation*}%
Then, since $e$ is a geodesic segment in $X$,
\begin{equation*}
length\left( e\right) \geq d\left( p^{\ast },p^{\ast }\right) \geq \left|
d\left( p,p^{\ast }\right) -d\left( p,p_{\ast }\right) \right|
=\int_{0}^{\infty }\chi _{I_{e}}\left( r\right) dr
\end{equation*}%
where $\chi _{I_{e}}\left( r\right) $ is the characteristic function of the
interval $I_{e}:=\left[ d\left( p,p_{\ast }\right) ,d\left( p,p^{\ast
}\right) \right] $. By assumption, $\mathbf{E}_{r}$ is nonempty for almost
all $r\in \left[ 0,r_{0}\right] $. Also, observe that $e\in \mathbf{E}_{r}$
if and only if $\chi _{I_{e}}\left( r\right) =1$. Therefore,
\begin{eqnarray*}
\mathbf{M}_{\alpha }(G|_{\bar{B}\left( p,r_{0}\right) }) &=&\sum_{e\in
E\left( G\mid B\left( p,r_{0}\right) \right) }\left[ w(e)\right] ^{\alpha }%
\text{length}(e) \\
&\geq &\sum_{e\in E\left( G\mid B\left( p,r_{0}\right) \right) }\left[ w(e)%
\right] ^{\alpha }\int_{0}^{\infty }\chi _{I_{e}}\left( r\right) dr \\
&=&\int_{0}^{r_{0}}\sum_{e\in \mathbf{E}_{r}}\left[ w(e)\right] ^{\alpha }dr.
\end{eqnarray*}
\end{proof}

The following corollary implies a positive lower bound on $d_{\alpha }\left(
\mathbf{a,b}\right) $ when $\mathbf{a\neq b}$.

\begin{corollary}
\label{negative_estimate}Let the assumptions be as in Lemma \ref{estimate}
and $\alpha \leq 0$. Then
\begin{equation*}
\mathbf{M}_{\alpha }\left( G|_{\bar{B}\left( p,r_{0}\right) }\right) \geq
\Lambda ^{\alpha }r_{0},
\end{equation*}%
where $\Lambda $ is an upper bound of the weight $w\left( e\right) $ for
every edge $e$ in $G|_{\bar{B}\left( p,r_{0}\right) }$. In particular, for
any atomic measure
\begin{equation*}
\mathbf{a=}\sum_{i=1}^{k}m_{i}\delta _{x_{i}}
\end{equation*}%
on $X$ with mass $\left| \left| \mathbf{a}\right| \right|
:=\sum_{i=1}^{k}m_{i}>0$, we have the following estimate
\begin{equation}
d_{\alpha }\left( \mathbf{a,\left| \left| \mathbf{a}\right| \right| \delta }%
_{p}\right) \geq \left| \left| \mathbf{a}\right| \right| ^{\alpha
}\max_{1\leq i\leq k}\left\{ d\left( p,x_{i}\right) \right\} \text{ .}
\label{lower_bound}
\end{equation}
\end{corollary}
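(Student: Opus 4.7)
The plan is to derive the first inequality directly from Lemma~\ref{estimate} using the sign of $\alpha$, and then to reduce the ``in particular'' statement to the first inequality by exhibiting, for an arbitrary transport path $G\in Path(\mathbf{a},\|\mathbf{a}\|\delta_p)$, a suitable radius $r_0$ along which the hypothesis of Lemma~\ref{estimate} is satisfied.

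For the first assertion, I would start from the conclusion of Lemma~\ref{estimate}. Since $\alpha\leq 0$ and $0<w(e)\leq \Lambda$ for every $e$ in $G|_{\bar{B}(p,r_0)}$, the elementary inequality $w(e)^{\alpha}\geq \Lambda^{\alpha}$ holds termwise. The hypothesis of the lemma guarantees that $\mathbf{E}_r$ is nonempty for a.e.\ $r\in[0,r_0]$, so
\begin{equation*}
\sum_{e\in \mathbf{E}_{r}}[w(e)]^{\alpha}\;\geq\;\Lambda^{\alpha}\qquad\text{for a.e.\ }r\in[0,r_0].
\end{equation*}
Integrating over $[0,r_0]$ and combining with \eqref{key_estimate} yields $\mathbf{M}_{\alpha}(G|_{\bar{B}(p,r_0)})\geq \Lambda^{\alpha}r_0$.

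For the estimate on $d_{\alpha}(\mathbf{a},\|\mathbf{a}\|\delta_p)$, fix any $G\in Path(\mathbf{a},\|\mathbf{a}\|\delta_p)$ and choose an index $i_0$ attaining $R:=\max_{1\leq i\leq k}d(p,x_i)$. The balance equation~\eqref{path} at $x_{i_0}$ forces a positive outflow, and iterating mass conservation along directed edges produces a directed walk in $G$ from $x_{i_0}$ to the terminal atom $p$ (the walk must eventually reach $p$ since $p$ is the only sink and $G$ is acyclic). Because each edge is a geodesic segment in $X$, this walk yields a continuous curve from $x_{i_0}$ to $p$; by the intermediate value property applied to the distance function $d(p,\cdot)$, the curve meets $S(p,r)$ for every $r\in[0,R]$, so $G\cap S(p,r)\neq\emptyset$ for all such $r$. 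Since every weight in a transport path from $\mathbf{a}$ to $\|\mathbf{a}\|\delta_p$ is bounded above by the total mass $\|\mathbf{a}\|$ (each $w(e)$ equals the sum of source masses routed through $e$), I may take $\Lambda=\|\mathbf{a}\|$ and $r_0=R$ in the first assertion. This gives
\begin{equation*}
\mathbf{M}_{\alpha}(G)\;\geq\;\mathbf{M}_{\alpha}(G|_{\bar{B}(p,R)})\;\geq\;\|\mathbf{a}\|^{\alpha}R,
\end{equation*}
where the first inequality uses that $w(e)^{\alpha}\,\mathrm{length}(e)\geq 0$ and restriction only decreases lengths. Taking the infimum over $G$ produces \eqref{lower_bound}.

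The step that requires the most care is the existence of the continuous curve from $x_{i_0}$ to $p$ inside $G$: one must use mass balance together with the acyclicity and finiteness of $G$ to argue that following directed edges of positive weight out of $x_{i_0}$ cannot revisit a vertex and cannot terminate at any source vertex, so it exhausts into $p$. Everything else is either a direct application of Lemma~\ref{estimate} or the monotonicity $w(e)\leq\|\mathbf{a}\|$, which is a standard consequence of the balance equation in an acyclic transport path.
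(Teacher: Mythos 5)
Your proof is correct and follows essentially the same route as the paper: termwise $w(e)^{\alpha}\geq\Lambda^{\alpha}$ combined with \eqref{key_estimate} for the first inequality, then $\Lambda=\left|\left|\mathbf{a}\right|\right|$ and $r_{0}=\max_{i}d\left(p,x_{i}\right)$ for \eqref{lower_bound}. The only difference is that you explicitly verify the hypothesis of Lemma \ref{estimate} (that $G\cap S\left(p,r\right)\neq\emptyset$ for all $r\leq r_{0}$, via a directed walk from the farthest atom to $p$), a step the paper leaves implicit; that verification is sound.
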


\begin{proof}
When $\alpha \leq 0$, we have \ for each $r$ with $\mathbf{E}_{r}$ nonempty,%
\begin{equation*}
\sum_{e\in \mathbf{E}_{r}}\left[ w(e)\right] ^{\alpha }\geq \max_{e\in
\mathbf{E}_{r}}\left[ w(e)\right] ^{\alpha }\geq \Lambda ^{\alpha }
\end{equation*}%
where $\Lambda $ is any upper bound of the weights of edges in $G|_{\bar{B}%
\left( p,r_{0}\right) }$. Therefore, by (\ref{key_estimate}),
\begin{equation*}
\mathbf{M}_{\alpha }\left( G|_{\bar{B}\left( p,r_{0}\right) }\right) \geq
\Lambda ^{\alpha }r_{0}\text{.}
\end{equation*}%
Now, let $G\in Path\left( \mathbf{a,\left| \left| \mathbf{a}\right| \right|
\delta }_{p}\right) $. When $\alpha \leq 0$, by the acyclic property of
transport paths, we have
\begin{equation*}
w\left( e\right) \leq \left| \left| \mathbf{a}\right| \right|
\end{equation*}%
for every edge $e\in E\left( G\right) $. Then, (\ref{lower_bound}) follows
by setting $r_{0}=\max_{1\leq i\leq k}\left\{ d\left( p,x_{i}\right)
\right\} $ and $\Lambda =$ $\left| \left| \mathbf{a}\right| \right| $.
\end{proof}

Lemma \ref{estimate} also gives a lower bound estimate for positive $\alpha $%
, which will be used in proposition \ref{general_positive_estimate}.

\begin{corollary}
\label{positive_estimate}Suppose $0\leq \alpha <1$. For any $\mathbf{a\in }%
\mathcal{A}\left( X\right) $ in the form of (\ref{prob_meas_a}), $p\in X$
and $r_{0}>0$, we have
\begin{equation}
\left[ \sum_{d\left( p,x_{i}\right) >r_{0}}m_{i}\right] ^{\alpha }\leq \frac{%
d_{\alpha }(\mathbf{a},\delta _{p})}{r_{0}}.  \label{positive_estimate_eqn}
\end{equation}
\end{corollary}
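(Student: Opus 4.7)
\smallskip
\noindent\textbf{Proof plan.} Let $M:=\sum_{d(p,x_{i})>r_{0}}m_{i}$. If $M=0$ the inequality is trivial, so assume $M>0$. I will fix an arbitrary $G\in Path(\mathbf{a},\delta_{p})$ and bound $\mathbf{M}_{\alpha}(G|_{\bar{B}(p,r_{0})})$ from below by $M^{\alpha}r_{0}$; taking infimum over $G$ then yields $(\ref{positive_estimate_eqn})$. The strategy is to apply Lemma \ref{estimate} to the ball $\bar{B}(p,r_{0})$, and to control the integrand $\sum_{e\in\mathbf{E}_{r}}w(e)^{\alpha}$ from below by $M^{\alpha}$ using a flow argument together with subadditivity of $t\mapsto t^{\alpha}$.

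The first step is a flow estimate: for every $r\in(0,r_{0})$, I claim that $\sum_{e\in\mathbf{E}_{r}}w(e)\geq M$, and in particular $\mathbf{E}_{r}\neq\emptyset$. To see this, let $A_{r}:=\{x\in X:d(p,x)>r\}$ and sum the balance equation (\ref{path}) over all vertices $v\in V(G)\cap A_{r}$. On one hand, the telescoping of the directed edges inside $A_{r}$ cancels, leaving the net out-flow from $A_{r}$, i.e. the total weight of edges exiting $A_{r}$ minus the total weight of edges entering $A_{r}$. On the other hand, the source/sink terms contribute precisely $(\sum_{d(p,x_{i})>r}m_{i})-0$, since the sink $p$ lies in $\bar{B}(p,r)\subset X\setminus A_{r}$. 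Thus the outgoing flux is at least $\sum_{d(p,x_{i})>r}m_{i}\geq M$. Since each edge is a geodesic segment, any edge with one endpoint in $A_{r}$ and the other in $\bar{B}(p,r)$ intersects $S(p,r)$, so such edges lie in $\mathbf{E}_{r}$, giving $\sum_{e\in\mathbf{E}_{r}}w(e)\geq M$.

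The second step applies the elementary subadditivity $\sum_{i}w_{i}^{\alpha}\geq(\sum_{i}w_{i})^{\alpha}$, valid for nonnegative $w_{i}$ and $0\leq\alpha\leq 1$, to conclude
\begin{equation*}
\sum_{e\in\mathbf{E}_{r}}w(e)^{\alpha}\;\geq\;\Bigl(\sum_{e\in\mathbf{E}_{r}}w(e)\Bigr)^{\alpha}\;\geq\;M^{\alpha}
\end{equation*}
for a.e. $r\in(0,r_{0})$. Since the hypothesis of Lemma \ref{estimate} is now verified on $[0,r_{0}]$, inequality (\ref{key_estimate}) gives
\begin{equation*}
\mathbf{M}_{\alpha}(G)\;\geq\;\mathbf{M}_{\alpha}(G|_{\bar{B}(p,r_{0})})\;\geq\;\int_{0}^{r_{0}}M^{\alpha}\,dr\;=\;M^{\alpha}r_{0},
\end{equation*}
and taking infimum over $G\in Path(\mathbf{a},\delta_{p})$ yields (\ref{positive_estimate_eqn}).

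The main technical point is the flow estimate $\sum_{e\in\mathbf{E}_{r}}w(e)\geq M$; everything else is Lemma \ref{estimate} plus the standard concavity of $t^{\alpha}$. I expect the balance-equation bookkeeping in Step 1 to be the only place requiring care, since one must verify that $\delta_{p}$ contributes no source to $A_{r}$ and that geometric crossings of $S(p,r)$ by geodesic edges are correctly captured by the set $\mathbf{E}_{r}$.
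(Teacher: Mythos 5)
Your proof is correct and follows essentially the same route as the paper: the flow estimate $\sum_{e\in\mathbf{E}_{r}}w(e)\geq M$, the subadditivity of $t\mapsto t^{\alpha}$, and Lemma \ref{estimate} integrated over $[0,r_{0}]$. The only difference is that you spell out the cut/balance-equation argument for the flow estimate, which the paper simply asserts.
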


\begin{proof}
Let $\lambda =\sum_{d\left( p,x_{i}\right) >r_{0}}m_{i}$. Let $G$ be any
transport path from $\mathbf{a}$ to $\delta _{p}$. Then, for any $0<r\leq
r_{0}$, we have
\begin{equation*}
\sum_{e\in \mathbf{E}_{r}}w(e)\geq \lambda .
\end{equation*}%
By lemma \ref{estimate}, since the function $f\left( x\right) =x^{\alpha }$
is concave on $\left[ 0,1\right] $ when $0\leq \alpha <1$, we have
\begin{eqnarray*}
\mathbf{M}_{\alpha }(G|_{\bar{B}\left( p,r_{0}\right) }) &\geq
&\int_{0}^{r_{0}}\sum_{e\in \mathbf{E}_{r}}\left[ w(e)\right] ^{\alpha }dr \\
&\geq &\int_{0}^{r_{0}}\left[ \sum_{e\in \mathbf{E}_{r}}w(e)\right] ^{\alpha
}dr\geq \int_{0}^{r_{0}}\lambda ^{\alpha }dr=\lambda ^{\alpha }r_{0}\text{.}
\end{eqnarray*}%
Therefore, we have (\ref{positive_estimate_eqn}).
\end{proof}

By means of corollary \ref{negative_estimate}, the proof of %
\cite[Proposition 2.2.3]{xia6} shows the following proposition.

\begin{proposition}
Suppose $\alpha <1$, and $X$ is a geodesic metric space. Then $d_{\alpha }$
defined in definition \ref{d_alpha} is a metric on the space $\mathcal{A}(X)$
of atomic probability measures on $X$.
\end{proposition}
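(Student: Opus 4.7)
The plan is to verify the four metric axioms for $d_\alpha$ on $\mathcal{A}(X)$. Non-negativity is immediate since $\mathbf{M}_\alpha(G)$ is a sum of terms $w(e)^\alpha\,\mathrm{length}(e)\ge 0$. Symmetry is obtained by reversing the direction of every edge in $G\in\mathit{Path}(\mathbf{a},\mathbf{b})$: the reversed graph remains acyclic, satisfies the balance equations of $\mathit{Path}(\mathbf{b},\mathbf{a})$, and has the same edge weights and lengths, hence the same $\mathbf{M}_\alpha$.

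For the triangle inequality, fix $\varepsilon>0$ and choose near-optimal $G_1\in\mathit{Path}(\mathbf{a},\mathbf{b})$ and $G_2\in\mathit{Path}(\mathbf{b},\mathbf{c})$. Form the disjoint-union superposition $G_1\sqcup G_2$; its boundary is $\mathbf{c}-\mathbf{a}$ and its cost is exactly $\mathbf{M}_\alpha(G_1)+\mathbf{M}_\alpha(G_2)$. Reduce it to an acyclic element of $\mathit{Path}(\mathbf{a},\mathbf{c})$ using the subadditivity $(w_1+w_2)^\alpha\le w_1^\alpha+w_2^\alpha$ (valid for all $\alpha\le 1$) to merge codirected parallel edges, and the explicit reduction of opposing-edge cycles built in the proof of \cite[Proposition 2.2.3]{xia6}. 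Sending $\varepsilon\downarrow 0$ completes this step.

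The decisive axiom, and the one for which Corollary \ref{negative_estimate} is invoked, is positivity: if $\mathbf{a}\neq\mathbf{b}$ then $d_\alpha(\mathbf{a},\mathbf{b})>0$. Given $\mathbf{a}\neq\mathbf{b}$, one locates $p\in X$ and $r_0>0$ such that $\mathbf{a}|_{\bar B(p,r_0)}\neq\mathbf{b}|_{\bar B(p,r_0)}$, forcing every $G\in\mathit{Path}(\mathbf{a},\mathbf{b})$ to satisfy $G\cap S(p,r)\neq\emptyset$ for almost every $r\in[0,r_0]$. Since $\mathbf{a}$ and $\mathbf{b}$ are probability measures and $G$ is acyclic, the weights of all edges of $G$ are bounded above by the total mass $1$, so Corollary \ref{negative_estimate} gives $\mathbf{M}_\alpha(G)\ge 1^\alpha\cdot r_0=r_0>0$, uniform in $G$. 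Taking the infimum yields $d_\alpha(\mathbf{a},\mathbf{b})\ge r_0>0$.

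The main obstacle is the triangle inequality for $\alpha<0$: the naive device of cancelling a pair of opposite-directed edges with weights $w_1>w_2$ into one edge of weight $w_1-w_2$ can strictly increase $\mathbf{M}_\alpha$, since $t\mapsto t^\alpha$ blows up as $t\to 0^+$. This is precisely the point where the argument has to lean on the reduction procedure in \cite[Proposition 2.2.3]{xia6}, which realizes the simplification of $G_1\sqcup G_2$ as a sequence of codirected mergings, where subadditivity does apply, rather than signed cancellations.
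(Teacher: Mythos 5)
Your proof follows essentially the same route as the paper, whose own argument is simply to combine Corollary \ref{negative_estimate} with the proof of \cite[Proposition 2.2.3]{xia6}: the genuinely new ingredient in the metric-space setting, needed precisely for $\alpha<0$, is the positive lower bound on $d_{\alpha }\left( \mathbf{a},\mathbf{b}\right) $, and you invoke it exactly where the paper does, while deferring the cycle-reduction step of the triangle inequality to \cite{xia6} just as the paper does. Two small precision points. First, the condition you state for positivity, $\mathbf{a}|_{\bar{B}\left( p,r_{0}\right) }\neq \mathbf{b}|_{\bar{B}\left( p,r_{0}\right) }$, does not by itself force $G\cap S\left( p,r\right) \neq \emptyset $ for almost every $r\in \left[ 0,r_{0}\right] $ (take $\mathbf{a}=\delta _{x}$, $\mathbf{b}=\delta _{y}$ with both atoms interior to the ball: a geodesic from $x$ to $y$ need not meet small spheres about $p$); what is actually needed is $\mathbf{a}\left( \bar{B}\left( p,r\right) \right) \neq \mathbf{b}\left( \bar{B}\left( p,r\right) \right) $ for all $r\in \left( 0,r_{0}\right) $, which one arranges by centering $p$ at an atom where the two measures assign different masses and taking $r_{0}$ smaller than the distance from $p$ to every other atom. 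Second, the uniform bound $\mathbf{M}_{\alpha }\left( G\right) \geq 1^{\alpha }r_{0}=r_{0}$ is only what Corollary \ref{negative_estimate} gives under its hypothesis $\alpha \leq 0$; for $0\leq \alpha <1$ a single crossing edge of weight $w<1$ contributes only $w^{\alpha }<1$, so positivity should instead be drawn from Corollary \ref{positive_estimate} (yielding the lower bound $\left| \mathbf{a}\left( \bar{B}\right) -\mathbf{b}\left( \bar{B}\right) \right| ^{\alpha }r_{0}$) or simply from $\mathbf{M}_{\alpha }\left( G\right) \geq \mathbf{M}_{1}\left( G\right) \geq d_{1}\left( \mathbf{a},\mathbf{b}\right) >0$, using that acyclicity bounds every edge weight by $1$. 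Both fixes are one line, and your identification of the opposing-edge cancellation as the obstruction to the triangle inequality when $\alpha <0$ is the right point to be careful about.
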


\subsection{The $d_{\protect\alpha }$ metric viewed as a metric induced by a
quasimetric}

When $0\leq \alpha <1$, another approach of the metric $d_{\alpha }$ was
introduced in \cite{xia5}, which says that the metric $d_{\alpha }$ is the
intrinsic metric on $\mathcal{A}\left( X\right) $ induced by a quasimetric
\footnote{%
A function $q:X\times X\rightarrow \lbrack 0,+\infty )$ is a \textit{%
quasimetric} on $X$ if $q$ satisfies all the conditions of a metric except
that $q$ satisfies a relaxed triangle inequality $q\left( x,y\right) \leq
C\left( q\left( x,z\right) +q\left( z,y\right) \right) $ for some $C\geq 1$,
rather than the usual triangle inequality.} $J_{\alpha }$. Let us briefly
recall the definition of the quasimetric $J_{\alpha }$ here.

Let $\mathbf{a}$ and $\mathbf{b}$ be two fixed atomic probability measures
in the form of (\ref{prob_meas}) on a metric space $X$, a \textit{transport
plan} from $\mathbf{a}$ to $\mathbf{b}$ is an atomic probability measure
\begin{equation}
\gamma =\sum_{i=1}^{m}\sum_{j=1}^{\ell }\gamma _{ij}\delta _{\left(
x_{i},y_{j}\right) }  \label{transport_plan}
\end{equation}%
in the product space $X\times X$ such that
\begin{equation}
\sum_{i=1}^{m}\gamma _{ij}=n_{j}\text{ and }\sum_{j=1}^{\ell }\gamma
_{ij}=m_{i}  \label{margins}
\end{equation}%
for each $i$ and $j$. Let $Plan\left( \mathbf{a},\mathbf{b}\right) $ be the
space of all transport plans from $\mathbf{a}$ to $\mathbf{b}$.

For any atomic probability measure $\gamma $ in $X\times X$ of the form (\ref%
{transport_plan}) and any $0\leq \alpha <1$, we define
\begin{equation*}
H_{\alpha }\left( \gamma \right) :=\sum_{i=1}^{m}\sum_{j=1}^{\ell }\left(
\gamma _{ij}\right) ^{\alpha }d\left( x_{i},y_{j}\right) ,
\end{equation*}%
where $d$ is the given metric on $X$.

Using $H_{\alpha }$, we define
\begin{equation*}
J_{\alpha }\left( \mathbf{a},\mathbf{b}\right) :=\min \left\{ H_{\alpha
}\left( \gamma \right) :\gamma \in Plan\left( \mathbf{a},\mathbf{b}\right)
\right\} .
\end{equation*}%
For any given natural number $N\in \mathbb{N}$ , let $\mathcal{A}_{N}(X)$ be
the space of all atomic probability measures
\begin{equation*}
\sum_{i=1}^{m}a_{i}\delta _{x_{i}}
\end{equation*}%
on $X$ with $m\leq N$, and then $\mathcal{A}\left( X\right) =\bigcup_{N}%
\mathcal{A}_{N}\left( X\right) $ is the space of all atomic probability
measures on $X$.

In \cite[Proposition 4.2]{xia5}, we showed that $J_{\alpha }$ defines a
quasimetric on $\mathcal{A}_{N}\left( X\right) $. Moreover, $J_{\alpha }$ is
a complete quasimetric on $\mathcal{A}_{N}\left( X\right) $ if $\left(
X,d\right) $ is a complete metric space. The quasimetric $J_{\alpha }$ has a
very nice property in the sense that this quasimetric is able to induce an
intrinsic metric on $\mathcal{A}_{N}\left( X\right) $.

\begin{proposition}
\cite[Theorem 4.17, Corollary 4.18]{xia5}Suppose $0\leq \alpha <1$, and $X$
is a geodesic metric space. Then, the metric $d_{\alpha }$ defined in
definition (\ref{d_alpha}) is the intrinsic metric on $\mathcal{A}_{N}\left(
X\right) $ induced by the quasimetric $J_{\alpha }$.
\end{proposition}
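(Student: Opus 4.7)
The plan is to establish the two inequalities $d_{\alpha}\le\tilde J_{\alpha}$ and $\tilde J_{\alpha}\le d_{\alpha}$ separately, where
\[
\tilde J_{\alpha}(\mathbf{a},\mathbf{b}):=\inf\sum_{i=0}^{k-1}J_{\alpha}(\mathbf{z}_{i},\mathbf{z}_{i+1})
\]
denotes the intrinsic metric induced by $J_{\alpha}$, with the infimum taken over finite chains $\mathbf{a}=\mathbf{z}_{0},\dots,\mathbf{z}_{k}=\mathbf{b}$ in $\mathcal{A}_{N}(X)$.

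\medskip

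For the inequality $d_{\alpha}\le\tilde J_{\alpha}$ (Step 1), I would first prove the pointwise bound $d_{\alpha}\le J_{\alpha}$ by manufacturing a transport path directly out of any transport plan. Given $\gamma=\sum_{i,j}\gamma_{ij}\delta_{(x_{i},y_{j})}\in Plan(\mathbf{a},\mathbf{b})$, let $G_{\gamma}$ be the weighted directed graph whose edges are the geodesic segments $[x_{i},y_{j}]$ carrying weights $\gamma_{ij}$ (omitting pairs where $\gamma_{ij}=0$ or $x_{i}=y_{j}$). The marginal identities (\ref{margins}) coincide with the balance equations (\ref{path}), and $G_{\gamma}$ is acyclic because it is bipartite from the $\{x_{i}\}$-side to the $\{y_{j}\}$-side; by construction $\mathbf{M}_{\alpha}(G_{\gamma})=H_{\alpha}(\gamma)$, so minimizing over $\gamma$ gives $d_{\alpha}\le J_{\alpha}$. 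Applying the triangle inequality for $d_{\alpha}$ across any chain and taking the infimum then yields $d_{\alpha}\le\tilde J_{\alpha}$.

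\medskip

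For the reverse inequality $\tilde J_{\alpha}\le d_{\alpha}$ (Step 2), I would decompose an arbitrary transport path into single-edge transports. Given $G\in Path(\mathbf{a},\mathbf{b})$, the acyclicity of $G$ permits a topological ordering of the edges $e_{1},\dots,e_{K}$ with the property that for every vertex $v$ every edge with head $v$ appears before every edge with tail $v$. Setting $\mathbf{z}_{0}:=\mathbf{a}$ and $\mathbf{z}_{k}:=\mathbf{z}_{k-1}-w(e_{k})\delta_{e_{k}^{-}}+w(e_{k})\delta_{e_{k}^{+}}$ produces atomic probability measures with $\mathbf{z}_{K}=\mathbf{b}$, both claims being direct consequences of iterating (\ref{path}). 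The transport plan that fixes every atom of $\mathbf{z}_{k-1}$ in place except for transferring mass $w(e_{k})$ from $e_{k}^{-}$ to $e_{k}^{+}$ lies in $Plan(\mathbf{z}_{k-1},\mathbf{z}_{k})$ and witnesses $J_{\alpha}(\mathbf{z}_{k-1},\mathbf{z}_{k})\le w(e_{k})^{\alpha}\,\text{length}(e_{k})$. Summing over $k$ yields $\tilde J_{\alpha}(\mathbf{a},\mathbf{b})\le\mathbf{M}_{\alpha}(G)$, and minimizing over $G$ completes this step.

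\medskip

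The hard part is confirming in Step 2 that the topological ordering really keeps every intermediate measure nonnegative: when an edge $e=[u,v]$ is processed we need $\mathbf{z}_{\mathrm{prev}}(u)\ge w(e)$, and this is exactly what (\ref{path}) gives once all edges ending at $u$ have been processed. Indeed, at that moment the mass at $u$ equals $\mathbf{a}(u)+\mathrm{inflow}(u)$, which by the balance equation at $u$ coincides with $\mathrm{outflow}(u)+\mathbf{b}(u)$ and therefore dominates any subsequent outflow from $u$. A secondary technical point is that the intermediate $\mathbf{z}_{k}$ may carry more than $N$ atoms even when $\mathbf{a},\mathbf{b}\in\mathcal{A}_{N}(X)$; this is resolved either by reading the intrinsic metric over all of $\mathcal{A}(X)=\bigcup_{N}\mathcal{A}_{N}(X)$, or by first restricting attention to transport paths whose vertex count is bounded in terms of $N$, neither of which affects the estimates above.
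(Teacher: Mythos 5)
The paper offers no proof of this proposition --- it is quoted from the cited reference --- so your argument can only be judged on its own terms. Your two-inequality strategy is the natural one, and the delicate parts you do carry out are sound: a topological ordering of the edges of an acyclic graph in which all in-edges of a vertex precede its out-edges does exist, and your balance-equation argument for the nonnegativity of the intermediate measures is exactly the needed verification. Two smaller repairs: bipartiteness does not by itself give acyclicity when the supports of $\mathbf{a}$ and $\mathbf{b}$ overlap (a plan can send mass from $x_{1}$ to $y_{1}$ and from $x_{2}=y_{1}$ to $y_{2}=x_{1}$, producing the directed $2$-cycle $x_{1}\rightarrow y_{1}\rightarrow x_{1}$), so in Step 1 you must additionally cancel flow around directed cycles, which only decreases $\mathbf{M}_{\alpha}$; and the cardinality issue you flag is real, since the intermediate measures live in $\mathcal{A}_{\left| V(G)\right| }(X)$ rather than $\mathcal{A}_{N}(X)$.

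The genuine gap is in what ``intrinsic metric induced by the quasimetric'' means. You take it to be the chain infimum $\inf \sum_{i}J_{\alpha }(\mathbf{z}_{i},\mathbf{z}_{i+1})$, but in the cited reference --- and as the paper's immediately following remark $\mathbf{M}_{\alpha }(G)=\int_{0}^{1}\left| \dot{g}(t)\right| _{J_{\alpha }}dt$ indicates --- it is the infimum of $J_{\alpha }$-lengths of continuous curves, where the length of a curve is a \emph{supremum} over partitions of $\sum J_{\alpha }$. The chain infimum is a priori no larger than this curve-length infimum, so your Step 1 still yields $d_{\alpha }\leq $ (curve-length infimum) once you insert the triangle inequality for $d_{\alpha }$ along any partition of a curve; but your Step 2 only bounds the chain infimum from above, and hence does not bound the quantity actually asserted to equal $d_{\alpha }$. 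The fix is to make your elementary moves continuous: replace the jump from $\mathbf{z}_{k-1}$ to $\mathbf{z}_{k}$ by the curve $t\mapsto \mathbf{z}_{k-1}-w(e_{k})\delta _{e_{k}^{-}}+w(e_{k})\delta _{\gamma _{k}(t)}$, where $\gamma _{k}$ parametrizes the geodesic edge $e_{k}$. For any partition of this curve, the single-pair plan bounds each increment of $J_{\alpha }$ by $w(e_{k})^{\alpha }$ times the corresponding arclength, so the supremum over partitions is still $w(e_{k})^{\alpha }\,\mathrm{length}(e_{k})$; concatenating these pieces produces exactly the piecewise Lipschitz curve $g$ with $\int_{0}^{1}\left| \dot{g}\right| _{J_{\alpha }}dt=\mathbf{M}_{\alpha }(G)$ on which the cited proof rests.
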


Moreover, in \cite[remark 4.16]{xia5} we have a simple formula for the $%
\mathbf{M}_{\alpha }$ cost. Suppose $G\in Path\left( \mathbf{a,b}\right) $
for some $\mathbf{a,b\in }\mathcal{A}_{N}\left( X\right) $. If each edge of $%
G$ is a geodesic curve between its endpoints in the geodesic metric space $X$%
, then there exists an associated piecewise metric Lipschitz curve $g:\left[
0,1\right] \rightarrow $ $\mathcal{A}_{N}\left( X\right) $ such that
\begin{equation*}
\mathbf{M}_{\alpha }\left( G\right) =\int_{0}^{1}\left| \dot{g}\left(
t\right) \right| _{J_{\alpha }}dt.
\end{equation*}%
where the quasimetric derivative
\begin{equation*}
\left| \dot{g}\left( t\right) \right| _{J_{\alpha }}:=\lim_{s\rightarrow t}%
\frac{J_{\alpha }\left( g\left( t\right) ,g\left( s\right) \right) }{\left|
t-s\right| }
\end{equation*}%
exists almost everywhere.

\begin{corollary}
\label{length_space_proof} Suppose $\left( X,d\right) $ is a complete
geodesic metric space. Then, $(\mathcal{A}_{N}\left( X\right) ,d_{\alpha })$
is a complete geodesic metric space for each $0\leq \alpha <1$.
\end{corollary}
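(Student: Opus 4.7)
The plan is to leverage the preceding proposition's identification of $d_{\alpha}$ as the intrinsic metric induced by the complete quasimetric $J_{\alpha}$, combined with the integral formula $\mathbf{M}_{\alpha}(G) = \int_{0}^{1}|\dot{g}(t)|_{J_{\alpha}}\,dt$ recalled just above.

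For completeness, I would first observe the elementary inequality $J_{\alpha}\le d_{\alpha}$ on $\mathcal{A}_{N}(X)$: given any $G\in\text{Path}(\mathbf{a},\mathbf{b})$, contracting each edge $e$ to the pair $(e^{-},e^{+})$ with weight $w(e)$ produces a transport plan $\gamma\in\text{Plan}(\mathbf{a},\mathbf{b})$ with $H_{\alpha}(\gamma)\le\mathbf{M}_{\alpha}(G)$. Consequently any $d_{\alpha}$-Cauchy sequence is also $J_{\alpha}$-Cauchy and, by completeness of $J_{\alpha}$, converges in $J_{\alpha}$ to some $\mathbf{a}_{\infty}\in\mathcal{A}_{N}(X)$. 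To upgrade to $d_{\alpha}$-convergence I would extract a fast subsequence $\{\mathbf{a}_{n_{k}}\}$ with $d_{\alpha}(\mathbf{a}_{n_{k}},\mathbf{a}_{n_{k+1}})<2^{-k}$, select near-optimal paths $G_{k}$ with $\mathbf{M}_{\alpha}(G_{k})<2^{-k+1}$, and concatenate the associated curves via the integral formula to produce a single rectifiable curve in $\mathcal{A}_{N}(X)$ whose tail past the parameter $s_{k}$ has $J_{\alpha}$-length $O(2^{-k})$; this tail length also bounds $d_{\alpha}(\mathbf{a}_{n_{k}},\mathbf{a}_{\infty})$, forcing $d_{\alpha}$-convergence.

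For the geodesic property, given $\mathbf{a},\mathbf{b}\in\mathcal{A}_{N}(X)$, I would take a minimizing sequence $G_{n}\in\text{Path}(\mathbf{a},\mathbf{b})$ with $\mathbf{M}_{\alpha}(G_{n})\to d_{\alpha}(\mathbf{a},\mathbf{b})$ and reparametrize the associated curves $g_{n}:[0,1]\to\mathcal{A}_{N}(X)$ by normalized $J_{\alpha}$-arc length, so that the family is uniformly $J_{\alpha}$-Lipschitz with constants converging to $d_{\alpha}(\mathbf{a},\mathbf{b})$. A diagonal extraction of a pointwise $J_{\alpha}$-limit $g$ along a countable dense subset of $[0,1]$, extended by uniform continuity, yields a $J_{\alpha}$-Lipschitz curve of Lipschitz constant $d_{\alpha}(\mathbf{a},\mathbf{b})$. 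Lower semicontinuity of the $J_{\alpha}$-length functional then gives $\int_{0}^{1}|\dot{g}|_{J_{\alpha}}\,dt=d_{\alpha}(\mathbf{a},\mathbf{b})$, so $g$ is the desired geodesic.

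The hard part is the pointwise $J_{\alpha}$-compactness required for the extraction in the geodesic step, since $X$ is not assumed locally compact and boundedness in $J_{\alpha}$ does not directly yield pre-compactness. The resolution is structural: without loss of generality one may assume each $G_{n}$ has been reduced to an $\mathbf{M}_{\alpha}$-efficient form by removing any Steiner-style redundancies (which only lowers the cost), whence acyclicity together with $|\text{supp}(\mathbf{a})|,|\text{supp}(\mathbf{b})|\le N$ gives a uniform bound $|V(G_{n})|,|E(G_{n})|\le N_{0}(N)$, and the vertices of $G_{n}$ are confined to a bounded $d$-ball around $\text{supp}(\mathbf{a})\cup\text{supp}(\mathbf{b})$ whose radius is controlled by $\mathbf{M}_{\alpha}(G_{n})$. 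Hence each $g_{n}(t)$ is a sum of at most $N_{0}$ Dirac masses supported in a common bounded subset of $X$, so the $J_{\alpha}$-compactness reduces to compactness in a bounded subset of $X^{N_{0}}$ coupled with the compact simplex of weight coordinates, where completeness of $X$ provides the limit.
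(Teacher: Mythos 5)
Your plan is genuinely different from the paper's: the paper offers no self-contained argument for this corollary, deriving it directly from the results of \cite{xia5} quoted immediately above it ($J_{\alpha}$ is a complete quasimetric on $\mathcal{A}_{N}(X)$, $d_{\alpha}$ is the intrinsic metric it induces, and the integral formula for $\mathbf{M}_{\alpha}$), whereas you try to reprove completeness and the geodesic property from scratch. Two of your steps do not survive scrutiny. The lesser problem is your opening inequality $J_{\alpha}\le d_{\alpha}$: contracting a transport path $G$ to a plan $\gamma$ does \emph{not} give $H_{\alpha}(\gamma)\le \mathbf{M}_{\alpha}(G)$, because $t\mapsto t^{\alpha}$ is subadditive, so distributing the weight $w(e)$ of an edge among the source--sink pairs routed through it yields $\sum_{ij}\gamma_{ij}^{\alpha}\ge w(e)^{\alpha}$ --- the inequality goes the wrong way. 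Concretely, for $\mathbf{a}=\delta_{x}$ and $\mathbf{b}=\tfrac12\delta_{y_{1}}+\tfrac12\delta_{y_{2}}$ with $y_{1},y_{2}$ close together and far from $x$, one has $J_{\alpha}(\mathbf{a},\mathbf{b})\approx 2^{1-\alpha}d(x,y_{1})$ while $d_{\alpha}(\mathbf{a},\mathbf{b})\approx d(x,y_{1})$, so $J_{\alpha}>d_{\alpha}$. What is true, and still suffices to transfer Cauchyness, is $J_{\alpha}\le N^{2(1-\alpha)}d_{\alpha}$, obtained from the same contraction together with the bound $\sum_{i=1}^{K}t_{i}^{\alpha}\le K^{1-\alpha}\bigl(\sum_{i=1}^{K}t_{i}\bigr)^{\alpha}$ applied with $K\le N^{2}$ on each edge; so this part is repairable.

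The second problem is a genuine gap that your method cannot close. In the geodesic step you reduce the needed compactness to ``a bounded subset of $X^{N_{0}}$ coupled with the compact simplex of weight coordinates, where completeness of $X$ provides the limit.'' But a closed bounded subset of a complete geodesic metric space need not be compact --- take $X$ to be an infinite-dimensional Hilbert space, or a complete $\mathbb{R}$-tree with infinitely many branches issuing from a single point --- so the branch points of your minimizing sequence $G_{n}$ may admit no convergent subsequence, and the Arzel\`a--Ascoli extraction has nothing to converge to. (In Euclidean space, as in \cite{xia1}, this is exactly where local compactness is used; the whole point of the present setting is that it is absent.) This is why the paper routes the argument through \cite{xia5}: there the geodesic property of the induced intrinsic metric is obtained without any compactness, by constructing exact midpoints --- take a $J_{\alpha}$-optimal plan and push each atom halfway along a geodesic of $X$, with quantitative control of $J_{\alpha}$ under this operation --- iterating the midpoint construction over the dyadic rationals, and invoking completeness of the quasimetric $J_{\alpha}$ to fill in the curve. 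To make your proof work you would have to replace the direct-method extraction by such a midpoint scheme.
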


Since $\mathcal{A}_{1}\left( X\right) \subset \mathcal{A}_{2}\left( X\right)
\subset \cdots \subset \mathcal{A}_{N}\left( X\right) \subset \cdots $, and $%
(\mathcal{A}_{N}\left( X\right) ,d_{\alpha })$ is a geodesic space for each $%
N$, we have the following existence result of optimal transport path:

\begin{proposition}
\label{all_atomic_measures} \cite[proposition 4.02]{xia5} Suppose $\left(
X,d\right) $ is a complete geodesic metric space. Then, $\left( \mathcal{A}%
\left( X\right) ,d_{\alpha }\right) $ is a geodesic metric space for each $%
0\leq \alpha <1$. Moreover, for any $\mathbf{a},\mathbf{b\in }\mathcal{A}%
\left( X\right) $, every $\alpha -$optimal transport path from $\mathbf{a}$
to $\mathbf{b}$ is a geodesic from $\mathbf{a}$ to $\mathbf{b}$ in the
geodesic space $\left( \mathcal{A}\left( X\right) ,d_{\alpha }\right) $.
Vice versa, every geodesic from $\mathbf{a}$ to $\mathbf{b}$ in $\left(
\mathcal{A}\left( X\right) ,d_{\alpha }\right) $ is an $\alpha -$optimal
transport path from $\mathbf{a}$ to $\mathbf{b}$.
\end{proposition}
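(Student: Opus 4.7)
The plan is to reduce everything to the already-established case of $\mathcal{A}_N(X)$. Given $\mathbf{a},\mathbf{b}\in\mathcal{A}(X)$, both measures are supported on finitely many atoms, so there exists $N$ with $\mathbf{a},\mathbf{b}\in\mathcal{A}_N(X)$. The first key observation is that $d_\alpha$, as defined in Definition \ref{d_alpha}, depends only on $\text{Path}(\mathbf{a},\mathbf{b})$, which is an intrinsic attribute of the pair $(\mathbf{a},\mathbf{b})$: it does not change whether we view them as elements of $\mathcal{A}_N(X)$ or of $\mathcal{A}(X)$. Thus the metric $d_\alpha$ on $\mathcal{A}(X)$ restricts to exactly the metric $d_\alpha$ on $\mathcal{A}_N(X)$.

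By Corollary \ref{length_space_proof}, $(\mathcal{A}_N(X),d_\alpha)$ is a complete geodesic metric space, so there exists a geodesic $\sigma:[0,L]\to\mathcal{A}_N(X)$ from $\mathbf{a}$ to $\mathbf{b}$ with $L=d_\alpha(\mathbf{a},\mathbf{b})$. Because the metric agrees under the inclusion $\mathcal{A}_N(X)\subset\mathcal{A}(X)$, $\sigma$ is also a curve in $\mathcal{A}(X)$ whose $d_\alpha$-length equals $d_\alpha(\mathbf{a},\mathbf{b})$, and hence is a geodesic in $(\mathcal{A}(X),d_\alpha)$. This proves that $(\mathcal{A}(X),d_\alpha)$ is a geodesic metric space.

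For the equivalence between $\alpha$-optimal transport paths and geodesics, the plan is to invoke the length formula recalled from \cite[Remark 4.16]{xia5}: for every transport path $G\in\text{Path}(\mathbf{a},\mathbf{b})$ whose edges are geodesic segments, the associated piecewise Lipschitz curve $g:[0,1]\to\mathcal{A}_N(X)$ satisfies
\[
\mathbf{M}_\alpha(G)=\int_0^1 |\dot{g}(t)|_{J_\alpha}\,dt.
\]
Since $d_\alpha$ is the intrinsic metric induced by the quasimetric $J_\alpha$, this integral equals the $d_\alpha$-length of $g$. Hence $G$ is $\alpha$-optimal iff $\mathbf{M}_\alpha(G)=d_\alpha(\mathbf{a},\mathbf{b})$, iff $g$ has $d_\alpha$-length equal to $d_\alpha(\mathbf{a},\mathbf{b})$, iff $g$ is a geodesic. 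Conversely, any $d_\alpha$-geodesic from $\mathbf{a}$ to $\mathbf{b}$ stays in some $\mathcal{A}_N(X)$ by the first part, and can be realized by an associated transport path of the same $\mathbf{M}_\alpha$ cost, which therefore must be optimal.

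Since the heavy machinery — the geodesic property of $(\mathcal{A}_N(X),d_\alpha)$, the intrinsic-metric characterization, and the length formula — has already been developed in \cite{xia5} and recorded in the preceding corollary, the main obstacle is purely bookkeeping: verifying that the reduction to $\mathcal{A}_N$ does not alter the metric, and that the transport-path $\leftrightarrow$ curve correspondence preserves length in the ambient space $\mathcal{A}(X)$. Both are immediate from the definitions, so this proposition falls out as an assembly of results already in hand.
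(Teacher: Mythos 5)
Your overall strategy---reduce to $\mathcal{A}_N(X)$ via the nested union $\mathcal{A}(X)=\bigcup_N \mathcal{A}_N(X)$, observe that $d_\alpha$ as given in Definition \ref{d_alpha} is insensitive to $N$, and then import Corollary \ref{length_space_proof} and the machinery of \cite{xia5}---is exactly the route the paper takes; indeed the paper offers no independent argument beyond this observation and the citation of \cite[Proposition 4.02]{xia5}. The existence half and the direction ``optimal transport path $\Rightarrow$ geodesic'' are fine as you argue them: the length formula $\mathbf{M}_\alpha(G)=\int_0^1\left|\dot g(t)\right|_{J_\alpha}dt$, together with the identification of $d_\alpha$ as the intrinsic metric induced by $J_\alpha$, shows that an optimal $G$ yields a curve of $d_\alpha$-length equal to $d_\alpha(\mathbf{a},\mathbf{b})$, hence a geodesic.

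The converse direction, however, contains a genuine gap. You assert that an arbitrary geodesic $\sigma$ from $\mathbf{a}$ to $\mathbf{b}$ in $(\mathcal{A}(X),d_\alpha)$ ``stays in some $\mathcal{A}_N(X)$ by the first part.'' The first part only produces \emph{one} geodesic lying in $\mathcal{A}_N(X)$; it gives no control on other geodesics, whose values $\sigma(t)$ could a priori have an unbounded number of atoms as $t$ varies. Likewise, the claim that such a curve ``can be realized by an associated transport path of the same $\mathbf{M}_\alpha$ cost'' is not bookkeeping: the correspondence you quote from \cite[Remark 4.16]{xia5} goes from graphs to curves, and the reverse implication---that a finite-length Lipschitz curve in $(\mathcal{A}_N(X),J_\alpha)$ is realized by a transport path of cost at most its length---is precisely the substantive content of \cite[Theorem 4.17]{xia5} and of the cited Proposition 4.02. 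Either invoke those results explicitly for this direction, as the paper does, or supply the missing construction; as written, the converse does not follow from the ingredients you have assembled.
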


\section{Transportation in metric spaces with curvature bounded above}

In this section, we will show that when $X$ is a geodesic metric space with
curvature bounded above, then there exists a universal upper bound for the
degree of every vertex of every optimal transport path on $X$.

We now recall the definition of a space of bounded curvature \cite%
{metricgeometry}. For a real number $k$, the model space $M_{k}^{2}$ is the
simply connected surface with constant curvature $k$. That is, if $k=0$,
then $M_{k}^{2}$ is the Euclidean plane. If $k>0$, then $M_{k}^{2}$ is
obtained from the sphere $\mathbb{S}^{2}$ by multiplying the distance
function by the constant $\frac{1}{\sqrt{k}}$. If $k<0$, then $M_{k}^{2}$ is
obtained from the hyperbolic space $\mathbb{H}^{2}$ by multiplying the
distance function by the constant $\frac{1}{\sqrt{-k}}$. The diameter of $%
M_{k}^{2}$ is denoted by $D_{k}:=\pi /\sqrt{k}$ for $k>0$ and $D_{k}:=\infty
$ for $k\leq 0$.

Let $\left( X,d\right) $ be a geodesic metric space, and let $\Delta ABC$ be
a geodesic triangle in $X$ with geodesic segments as its sides. A \textit{%
comparison} triangle $\Delta \bar{A}\bar{B}\bar{C}$ is a triangle in the
model space $M_{k}^{2}$ such that $d\left( A,B\right) =\left| \bar{A}-\bar{B}%
\right| _{k},$ $d\left( B,C\right) =\left| \bar{B}-\bar{C}\right| _{k}$ and $%
d\left( A,C\right) =\left| \bar{A}-\bar{C}\right| _{k}$, where $\left| \cdot
\right| _{k}$ denotes the distance function in the model space $M_{k}^{2}$.
Such a triangle is unique up to isometry. Also, the interior angle of $%
\Delta \bar{A}\bar{B}\bar{C}$ at $\bar{B}$ is called the \textit{comparison
angle} between $A$ and $C$ at $B$.

A geodesic metric space $\left( X,d\right) $ is \textit{a space of curvature
bounded above by a real number }$k$ if for every geodesic triangle $\Delta
ABC$ in $X$ and every point $h$ in the geodesic segment $\gamma _{AC}$, one
has
\begin{equation*}
d\left( h,B\right) \leq \left| \bar{h}-\bar{B}\right| _{k}
\end{equation*}%
where $\bar{h}$ is the point on the side $\gamma _{\bar{A}\bar{C}}$ of a
comparison triangle $\Delta \bar{A}\bar{B}\bar{C}$ in $M_{k}^{2}$ such that $%
\left| \bar{h}-\bar{C}\right| _{k}=d\left( h,C\right) $.

Now, let $X$ be a geodesic metric space with curvature bounded above by a
real number $k$. Suppose $\alpha <1$ and $G$ is an $\alpha -$optimal
transport path between two atomic probability measures $\mathbf{a,b\in }%
\mathcal{A}\left( X\right) $. We will show that the comparison angle of any
two edges from a common vertex of $G$ is bounded below by a universal
constant depending only on $\alpha $. Moveover, when $X$ is in addition a
doubling space, then the degree of any vertex $v$ of $G$ is bounded above by
a constant depending only on $\alpha $ and the doubling constant of $X$.

\begin{minipage}[h]{0.5\linewidth}
More precisely, let $O$ be any vertex of $G$ and $e_{i}$ be any two distinct
directed edges with $e_{i}^{+}=O$ (or $e_{i}^{-}=O$ simultaneously) and
weight $m_{i}>0$ for $i=1,2$. Also, for $i=1,2$, let $A_{i}$ be the point on
the edge $e_{i}$ with $d\left( O,A_{i}\right) =r$ for some $r$ satisfying $%
0<r\leq \frac{1}{2}D_{k}$ and $r\leq length\left( e_{i}\right)$.
\end{minipage}
\begin{minipage}[h]{0.5\linewidth}
\centering \includegraphics{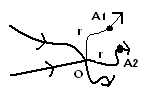}
\label{y_shape}
\end{minipage}
%
%

Now, we want to estimate the distance $d\left( A_{1},A_{2}\right) $. To do
it, we first denote
\begin{equation}
R=\sqrt{\frac{\left( m_{1}^{\alpha }+m_{2}^{\alpha }\right) ^{2}-\left(
m_{1}+m_{2}\right) ^{2\alpha }}{m_{1}^{\alpha }m_{2}^{\alpha }}}
\label{R_def}
\end{equation}%
and have the following estimates for $R$:

\begin{lemma}
For each $\alpha <1$, the infimum of $R$ is given by%
\begin{equation}
R_{\alpha }:=\left\{
\begin{array}{cc}
\sqrt{2}, & \text{if }0<\alpha <\frac{1}{2} \\
\sqrt{4-4^{\alpha }}, & \text{if }\frac{1}{2}\leq \alpha <1\text{ or }\alpha
\leq 0.%
\end{array}%
\right.  \label{R_a}
\end{equation}%
For each $0\leq \alpha <1$, the supremum of $R$ is given by%
\begin{equation*}
\bar{R}_{\alpha }:=\left\{
\begin{array}{cc}
\sqrt{2}, & \text{if }\frac{1}{2}\leq \alpha <1\text{ } \\
\sqrt{4-4^{\alpha }}, & \text{if }0\leq \alpha <\frac{1}{2}.%
\end{array}%
\right.
\end{equation*}%
Also, when $\alpha =0$, then $R\equiv \sqrt{3}$. When $\alpha =\frac{1}{2}$,
then $R\equiv \sqrt{2}$. When $\alpha =1$, then $R\equiv 0$.
\end{lemma}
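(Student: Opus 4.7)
The plan is to reduce $R^{2}$ to a one-variable function and analyze its extrema there. Since the numerator and denominator defining $R$ are each homogeneous of degree $2\alpha$ in $(m_{1},m_{2})$, $R$ depends only on the ratio $t:=m_{1}/m_{2}\in(0,\infty)$. Setting $m_{2}=1,\ m_{1}=t$, write
$$f(t):=R^{2}=\frac{(1+t^{\alpha})^{2}-(1+t)^{2\alpha}}{t^{\alpha}};$$
swapping $m_{1}\leftrightarrow m_{2}$ shows $f(t)=f(1/t)$, so it suffices to study $f$ on $[1,\infty)$.

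I would first dispose of the degenerate values by direct substitution: $\alpha=0$ gives $f\equiv3$, $\alpha=1/2$ gives $f(t)=\frac{(\sqrt{t}+1)^{2}-(t+1)}{\sqrt{t}}\equiv2$, and $\alpha=1$ gives $f\equiv0$, confirming the three special cases at the end of the lemma. For the remaining $\alpha$ I would compute the central value $f(1)=4-4^{\alpha}$ and the behavior as $t\to\infty$. Writing $(1+t)^{2\alpha}=t^{2\alpha}(1+t^{-1})^{2\alpha}$ and expanding asymptotically gives $f(t)\to2$ when $0<\alpha<1$, while for $\alpha<0$ the term $t^{-\alpha}$ dominates and forces $f(t)\to+\infty$.

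The heart of the argument is the monotonicity of $f$ on $(1,\infty)$. Differentiating and simplifying yields
$$f'(t)=\alpha\,t^{-\alpha-1}(t-1)\,h(t),\qquad h(t):=\frac{t^{2\alpha}-1}{t-1}-(1+t)^{2\alpha-1},$$
which already exhibits $t=1$ as a critical point. The claim to establish is that $h$ has constant sign on $(1,\infty)$: the value $\lim_{t\to1^{+}}h(t)=2\alpha-2^{2\alpha-1}$ has the sign of $\alpha-1/2$ for $\alpha>0$ and is negative for $\alpha<0$, while an asymptotic expansion as $t\to\infty$ shows $h(t)$ decays to $0$ from the same side. Together with the signs of $\alpha$ and $t-1$ in the prefactor, this pins down the monotonicity of $f$ on $[1,\infty)$: decreasing for $0<\alpha<1/2$, increasing for $1/2<\alpha<1$, and increasing for $\alpha<0$. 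For $\alpha\le0$ a cleaner shortcut is available: the identity
$$f(t)-(4-4^{\alpha})=t^{-\alpha}\bigl[(t^{\alpha}-1)^{2}+(4t)^{\alpha}-(1+t)^{2\alpha}\bigr]$$
combined with $(1+t)^{2}\ge4t$ and $\alpha\le0$ gives $(4t)^{\alpha}-(1+t)^{2\alpha}\ge0$, hence $f(t)\ge4-4^{\alpha}$ with equality iff $t=1$.

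With monotonicity in hand, the extrema follow: for $0<\alpha<1/2$, $f$ runs from $f(1)=4-4^{\alpha}$ down to the asymptote $2$, so $\bar{R}_{\alpha}=\sqrt{4-4^{\alpha}}$ and $R_{\alpha}=\sqrt{2}$; for $1/2<\alpha<1$, $f$ runs from $4-4^{\alpha}$ up to $2$, yielding the reverse; for $\alpha<0$, $f$ increases from $4-4^{\alpha}$ to $+\infty$, giving $R_{\alpha}=\sqrt{4-4^{\alpha}}$ with no finite supremum. The main technical hurdle is verifying that $h$ has no interior zero on $(1,\infty)$ for $0<\alpha<1$; this can be attacked either by examining $h'$ directly, or by rewriting $\frac{t^{2\alpha}-1}{t-1}=\int_{0}^{1}2\alpha\bigl[1+\lambda(t-1)\bigr]^{2\alpha-1}d\lambda$ and comparing the integrand to $(1+t)^{2\alpha-1}$ via a convexity/concavity argument in the power $2\alpha-1$.
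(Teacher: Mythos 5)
Your proposal is correct and, at the top level, follows the same route as the paper: reduce $R^{2}$ to a one-variable function by homogeneity and determine its extrema by calculus. The paper normalizes to $k_{1}+k_{2}=1$ and studies $f_{\alpha }(x)=\frac{(x^{\alpha }+(1-x)^{\alpha })^{2}-1}{x^{\alpha }(1-x)^{\alpha }}$ on $(0,1)$, asserting (``Using Calculus, one may check'') its convexity or concavity according to the position of $\alpha $ relative to $0$ and $\frac{1}{2}$, and then reading off $f_{\alpha }(\frac{1}{2})=4-4^{\alpha }$ and the endpoint limit $2$; your parametrization $t=m_{1}/m_{2}$ with the symmetry $f(t)=f(1/t)$ is equivalent (via $k_{1}=t/(1+t)$), and your special values, central value and limits all agree with the paper's. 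Where you genuinely differ is in execution: you replace the convexity claim by the explicit factorization $f^{\prime }(t)=\alpha t^{-\alpha -1}(t-1)h(t)$, which correctly isolates the entire difficulty in the sign of $h$ on $(1,\infty )$; and your identity $f(t)-(4-4^{\alpha })=t^{-\alpha }\bigl[(t^{\alpha }-1)^{2}+(4t)^{\alpha }-(1+t)^{2\alpha }\bigr]$ combined with $(1+t)^{2}\geq 4t$ settles $\alpha \leq 0$ completely and more cleanly than the paper does (for $\alpha <0$ one can alternatively note $h<0$ trivially, since $t^{2\alpha }<1$ for $t>1$).

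The one step you leave open --- that $h$ keeps a constant sign on $(1,\infty )$ when $0<\alpha <1$ --- is precisely the step the paper also glosses over, and it is true; the convexity route you gesture at does close it. Concretely, substituting $t=\frac{1+s}{1-s}$ with $s\in (0,1)$ turns the comparison $\frac{t^{2\alpha }-1}{t-1}\lessgtr (1+t)^{2\alpha -1}$ into $(1+s)^{2\alpha }-(1-s)^{2\alpha }\lessgtr 2^{2\alpha }s$, i.e.\ whether $g(s)=(1+s)^{2\alpha }-(1-s)^{2\alpha }$ lies below or above its chord from $(0,0)$ to $(1,2^{2\alpha })$. Since $g^{\prime \prime }(s)=2\alpha (2\alpha -1)\bigl[(1+s)^{2\alpha -2}-(1-s)^{2\alpha -2}\bigr]$ has the sign of $2\alpha (1-2\alpha )$, the function $g$ is convex (hence below the chord, so $h<0$) for $0<\alpha <\frac{1}{2}$ and concave (hence above the chord, so $h>0$) for $\frac{1}{2}<\alpha <1$, which gives exactly the monotonicity you claim and hence the stated infima and suprema. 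With that supplied, your argument is complete.
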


When $\alpha <0$, we will show $R\leq 2$ later in lemma \ref{R_less_than_2}.

\begin{proof}
We first denote
\begin{equation}
k_{1}=\frac{m_{1}}{m_{1}+m_{2}}\text{, }k_{2}=\frac{m_{2}}{m_{1}+m_{2}}
\label{k_i}
\end{equation}%
as in \cite[Example 2.1]{xia1}. Note that $k_{1}+k_{2}=1$ and
\begin{equation*}
R=\sqrt{\frac{\left( k_{1}^{\alpha }+k_{2}^{\alpha }\right) ^{2}-1}{%
k_{1}^{\alpha }k_{2}^{\alpha }}}.
\end{equation*}%
By considering the function

\begin{equation*}
f_{\alpha }\left( x\right) =\frac{\left( x^{\alpha }+\left( 1-x\right)
^{\alpha }\right) ^{2}-1}{x^{\alpha }\left( 1-x\right) ^{\alpha }}
\end{equation*}%
for $x\in \left( 0,1\right) $ and $\alpha \leq 1$, we have $R=\sqrt{%
f_{\alpha }\left( k_{1}\right) }$. Using Calculus, one may check that for
each $x\in \left( 0,1\right) $,

\begin{enumerate}
\item when $\alpha \in \left( 0,\frac{1}{2}\right) $, the function $%
f_{\alpha }$ is strictly concave up and
\begin{equation*}
4-4^{\alpha }=f_{\alpha }\left( \frac{1}{2}\right) \leq f_{\alpha }\left(
x\right) <\lim_{y\rightarrow 0+}f_{\alpha }\left( y\right) =2;
\end{equation*}

\item when $\alpha \in \left( \frac{1}{2},1\right) $, the function $%
f_{\alpha }$ is strictly concave down and
\begin{equation*}
4-4^{\alpha }=f_{\alpha }\left( \frac{1}{2}\right) \geq f_{\alpha }\left(
x\right) >\lim_{y\rightarrow 0+}f_{\alpha }\left( y\right) =2;
\end{equation*}

\item when $\alpha \in \left( -\infty ,0\right) $, the function $f_{\alpha }$
is strictly concave down and
\begin{equation*}
f_{\alpha }\left( x\right) \geq f_{\alpha }\left( \frac{1}{2}\right)
=4-4^{\alpha };
\end{equation*}

\item $f_{\alpha }$ has constant values when $\alpha \in \left\{ 0,\frac{1}{2%
},1\right\} $.
\end{enumerate}

Using these facts, we get the estimates for $R=\sqrt{f_{\alpha }\left(
k_{1}\right) }$ for each $\alpha $.
\end{proof}

Now, we have the following key estimates for the distance $d\left(
A_{1},A_{2}\right) $:

\begin{lemma}
\label{key_lemma}Assume that $d\left( O,A_{1}\right) =d\left( O,A_{2}\right)
=r$ and $0<r\leq \frac{1}{2}D_{k}$. Then, we have the following estimates
for $a:=d\left( A_{1},A_{2}\right) $:

\begin{enumerate}
\item If $k>0$, then
\begin{equation*}
\cos \left( a\sqrt{k}\right) \leq 1-\frac{R^{2}}{2}\sin ^{2}\left( r\sqrt{k}%
\right) \text{ \ \ i.e. }\sin \frac{a\sqrt{k}}{2}\geq \frac{R}{2}\sin \left(
r\sqrt{k}\right).
\end{equation*}

\item If $k=0$, then
\begin{equation*}
a\geq Rr.
\end{equation*}

\item If $k<0$, then
\begin{equation*}
\cosh \left( a\sqrt{-k}\right) \geq 1+\frac{R^{2}}{2}\sinh ^{2}\left( r\sqrt{%
-k}\right) \text{ i.e. }\sinh \frac{a\sqrt{-k}}{2}\geq \frac{R}{2}\sinh
\left( r\sqrt{-k}\right) .
\end{equation*}
\end{enumerate}
\end{lemma}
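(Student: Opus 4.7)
The plan is to unify the three cases by the model-space law of cosines and then reduce them to a single angle estimate. Let $\bar\theta$ denote the interior angle at $\bar O$ of the comparison triangle $\bar\Delta \bar A_1 \bar O \bar A_2 \subset M_k^2$ for the isosceles data $|\bar O \bar A_i|_k=r$, $|\bar A_1 \bar A_2|_k=a$. The $M_k^2$ law of cosines yields $\sin(a\sqrt{k}/2)=\sin(r\sqrt{k})\sin(\bar\theta/2)$ when $k>0$, $a=2r\sin(\bar\theta/2)$ when $k=0$, and $\sinh(a\sqrt{-k}/2)=\sinh(r\sqrt{-k})\sin(\bar\theta/2)$ when $k<0$, so each of the three stated estimates is equivalent to $\sin(\bar\theta/2)\geq R/2$. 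Since $X$ is CAT$(k)$ and $r\leq D_k/2$, the Alexandrov angle $\angle_O$ between the initial directions $\vec u_1,\vec u_2$ of $e_1,e_2$ satisfies $\angle_O\leq\bar\theta$, so it will suffice to prove the infinitesimal bound $\sin(\angle_O/2)\geq R/2$, or equivalently
\[
\cos\angle_O \;\leq\; \frac{(m_1+m_2)^{2\alpha}-m_1^{2\alpha}-m_2^{2\alpha}}{2m_1^\alpha m_2^\alpha}.
\]

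To derive this angle estimate I will use $\mathbf{M}_\alpha$-minimality of $G$ through a path perturbation. For any $M\in X$, detach the portions of $e_1,e_2$ between $O$ and $A_1,A_2$ and reattach through a new branching vertex $M$, inserting edges $A_1M$, $A_2M$ of weights $m_1,m_2$ and an edge $MO$ of weight $m_1+m_2$. Balance at $O$ is preserved and the competitor $G'$ lies in $\mathrm{Path}(\mathbf a,\mathbf b)$, so $\mathbf{M}_\alpha(G)\leq\mathbf{M}_\alpha(G')$ yields the pointwise inequality
\[
(m_1^\alpha+m_2^\alpha)\,r \;\leq\; m_1^\alpha d(A_1,M)+m_2^\alpha d(A_2,M)+(m_1+m_2)^\alpha d(M,O), \qquad \forall\, M\in X. \quad (\ast)
\]
I then apply $(\ast)$ infinitesimally. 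The space of directions $\Sigma_O$ at $O$ inherits a CAT$(1)$ structure from $X$, and for a given $\vec v\in\Sigma_O$ I take $M_s$ at distance $s$ along a geodesic with initial direction $\vec v$. Substituting the first-variation formula $d(A_i,M_s)=r-s\cos\angle_O(\vec v,\vec u_i)+o(s)$ into $(\ast)$, dividing by $s$, and letting $s\to 0^+$ produces
\[
m_1^\alpha\cos\angle_O(\vec v,\vec u_1)+m_2^\alpha\cos\angle_O(\vec v,\vec u_2)\;\leq\;(m_1+m_2)^\alpha.
\]

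The final step is to optimize over $\vec v$. Parameterizing along the $\Sigma_O$-geodesic from $\vec u_1$ to $\vec u_2$ (which exists since $\angle_O\leq\pi$) by arclength $\phi\in[0,\angle_O]$, the two angles in the previous inequality become $\phi$ and $\angle_O-\phi$. A direct calculus argument shows the supremum of $h(\phi):=m_1^\alpha\cos\phi+m_2^\alpha\cos(\angle_O-\phi)$ over $[0,\angle_O]$ equals $\sqrt{m_1^{2\alpha}+2m_1^\alpha m_2^\alpha\cos\angle_O+m_2^{2\alpha}}$, attained at the unique interior critical point solving $m_1^\alpha\sin\phi=m_2^\alpha\sin(\angle_O-\phi)$. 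Squaring the resulting constraint $h_{\max}\leq(m_1+m_2)^\alpha$ and applying the half-angle identity $2\sin^2(\angle_O/2)=1-\cos\angle_O$ collapses the bound to exactly $\sin(\angle_O/2)\geq R/2$, closing the loop. The hard part will be justifying the infinitesimal step: I need enough directions $\vec v$ in $\Sigma_O$ to be realized by honest $X$-geodesics along the $\Sigma_O$-geodesic between $\vec u_1$ and $\vec u_2$ for the first-variation formula to apply pointwise; but by definition such directions are dense in $\Sigma_O$, so the inequality above extends to the full $\Sigma_O$-geodesic by continuity, which is all the sup argument requires.
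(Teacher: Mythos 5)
Your proof is correct, and it takes a genuinely different technical route from the paper's, even though both rest on the same variational inequality $(\ast)$ obtained by re-routing the two edges through a new branch point (this is exactly the paper's step $H(t)\geq H(0)$). The paper never linearizes at $O$ inside $X$: it places the competitor point $Q(t)$ on the geodesic from $O$ to a point $P\in \gamma _{A_{1}A_{2}}$, transfers everything to the model space via the comparison triangles $\Delta \bar{A}_{i}\bar{P}\bar{O}$ (the one-sided bound $\sigma _{i}(t)\geq d(A_{i},Q(t))$ being the raw curvature hypothesis), differentiates the model-space law of cosines at $t=0$, and optimizes the position of $P$ through the complex-number identity that fixes $\lambda _{1}=\Theta _{2}/(a\sqrt{k})$ --- a three-case trigonometric computation done separately for $k>0$, $k=0$, $k<0$. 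You instead differentiate in $X$ itself, which obliges you to invoke the first variation formula in CAT$(k)$ spaces --- note that it is precisely its nontrivial half, $d(A_{i},M_{s})\leq r-s\cos \angle +o(s)$, that you need, since $(\ast )$ bounds its right-hand side from below, and this half genuinely uses the curvature bound --- together with the Berestovskii--Nikolaev theorem that the completed space of directions is CAT$(1)$ and hence $\pi $-geodesic. In exchange, the three cases collapse into the single angle inequality $\cos \angle _{O}\leq 1-\frac{R^{2}}{2}$, and the distance estimates are recovered at the end from the standard comparison $\angle _{O}\leq \bar{\theta}$; your interior critical point $m_{1}^{\alpha }\sin \phi =m_{2}^{\alpha }\sin (\angle _{O}-\phi )$ is exactly the paper's choice of $\lambda _{1}$, so the two optimizations coincide. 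Two small points to tidy: the ``density'' step should be phrased as extending the inequality from $\Sigma _{O}$ to its completion (where the geodesic actually lives) using the continuity of $\vec{v}\mapsto m_{1}^{\alpha }\cos d(\vec{v},\vec{u}_{1})+m_{2}^{\alpha }\cos d(\vec{v},\vec{u}_{2})$; and when $\angle _{O}=\pi $ a geodesic between $\vec{u}_{1}$ and $\vec{u}_{2}$ need not exist in a CAT$(1)$ space, but there $h_{\max }=\left| m_{1}^{\alpha }-m_{2}^{\alpha }\right| $ is attained at an endpoint, so testing $\vec{v}=\vec{u}_{1}$ and $\vec{v}=\vec{u}_{2}$ already suffices.
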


\begin{proof}
Let $P$ be the point on the geodesic $\gamma _{A_{1}A_{2}}$ from $A_{1}$ to $%
A_{2}$ with
\begin{equation*}
d\left( A_{i},P\right) =\lambda _{i}d\left( A_{1},B\right)
\end{equation*}%
for $i=1,2$ and some $\lambda _{i}\in \left( 0,1\right) $ to be chosen later
in (\ref{lambda}) with $\lambda _{1}+\lambda _{2}=1$. For any $t\in \left[
0,1\right] $, let $Q\left( t\right) $ be the point on the geodesic $\gamma
_{OP}$ from $O$ to $P$ such that
\begin{equation*}
d\left( O,Q\left( t\right) \right) =tb\text{ and }d\left( P,Q\left( t\right)
\right) =\left( 1-t\right) b
\end{equation*}%
where $b=d\left( O,P\right) $. For $i=1,2$, let $\Delta \bar{A}_{i}\bar{P}%
\bar{O}$ be a comparison triangle of $\Delta A_{i}PO$ in the model space $%
M_{k}^{2}$.
\begin{figure}[h]
\centering \subfloat[A triangle $\Delta A_{i}PO$ in
$X$]{\label{old_triangle}\includegraphics[height=1.75in]{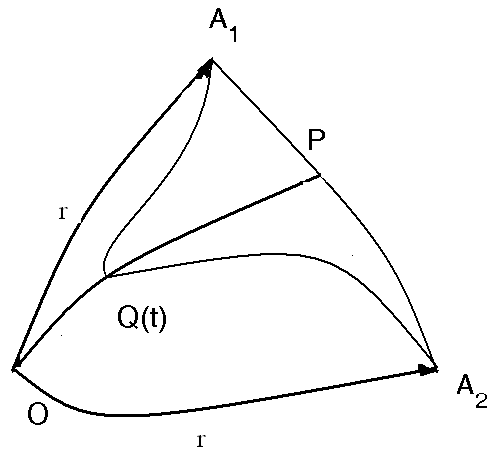}}
\subfloat[A comparison triangle $\Delta \bar{A}_{i}\bar{P}\bar{O}$
in
$M_k^2$]{\label{triangle}\includegraphics[height=1.75in]{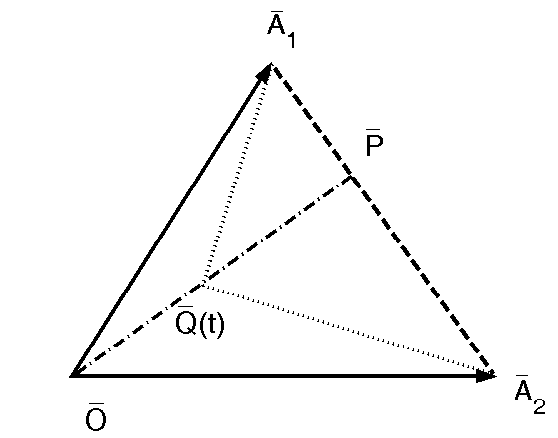}} %
\label{comparison_triangles}
\caption{Comparison triangles}
\end{figure}

Thus,
\begin{equation*}
\left| \bar{A}_{i}-\bar{P}\right| _{k}=\lambda _{i}a,\left| \bar{A}_{i}-%
\bar{O}\right| _{k}=r\text{ and }\left| \bar{O}-\bar{P}\right| _{k}=b.
\end{equation*}%
Let $\bar{Q}_{i}\left( t\right) $ be the point on the side $\gamma _{\bar{O}%
\bar{P}}$ of $\Delta \bar{A}_{i}\bar{P}\bar{O}$ such that $\left| \bar{O}-%
\bar{Q}_{i}\left( t\right) \right| _{k}=d\left( O,Q\left( t\right) \right)
=tb$ and let $\sigma _{i}\left( t\right) =\left| \bar{A}_{i}-\bar{Q}%
_{i}\left( t\right) \right| _{k}$. Since $X$ has curvature bounded above by $%
k$, we have $\sigma _{i}\left( t\right) \geq d\left( A_{i},Q\left( t\right)
\right)$. Let
\begin{eqnarray*}
H\left( t\right) &:&=\frac{1}{\left( m_{1}+m_{2}\right) ^{\alpha }}\left[
m_{1}^{\alpha }\sigma _{1}\left( t\right) +m_{2}^{\alpha }\sigma _{2}\left(
t\right) +\left( m_{1}+m_{2}\right) ^{\alpha }tb\right] \\
&=&k_{1}^{\alpha }\sigma _{1}\left( t\right) +k_{2}^{\alpha }\sigma
_{2}\left( t\right) +tb \\
&\geq &\frac{1}{\left( m_{1}+m_{2}\right) ^{\alpha }}\left[ m_{1}^{\alpha
}d\left( A_{1},Q\left( t\right) \right) +m_{2}^{\alpha }d\left(
A_{2},Q\left( t\right) \right) +\left( m_{1}+m_{2}\right) ^{\alpha }d\left(
O,Q\left( t\right) \right) \right] \\
&\geq &\frac{m_{1}^{\alpha }d\left( O,A_{1}\right) +m_{2}^{\alpha }d\left(
O,A_{2}\right) }{\left( m_{1}+m_{2}\right) ^{\alpha }}=H\left( 0\right) ,
\end{eqnarray*}%
since $O$ is a vertex of an $\alpha -$optimal transport path $G$. This
implies that\thinspace $H^{\prime }\left( 0\right) \geq 0$ if $H^{\prime
}\left( 0\right) $ exists. Now, we may calculate the derivative $H^{\prime
}\left( 0\right) =k_{1}^{\alpha }\sigma _{1}^{\prime }\left( 0\right)
+k_{2}^{\alpha }\sigma _{2}^{\prime }\left( 0\right) +b$ as follows.

When $k>0$, by applying the spherical law of cosines to triangles $\Delta
\bar{A}_{i}\bar{P}\bar{O}$ and $\Delta \bar{A}_{i}\bar{Q}\left( t\right)
\bar{O},$ we have
\begin{eqnarray*}
\cos \left( \lambda _{i}a\sqrt{k}\right) &=&\cos \left( r\sqrt{k}\right)
\cos \left( b\sqrt{k}\right) +\sin \left( r\sqrt{k}\right) \sin \left( b%
\sqrt{k}\right) \cos \theta _{i} \\
\cos \left( \sigma _{i}\left( t\right) \sqrt{k}\right) &=&\cos \left( r\sqrt{%
k}\right) \cos \left( tb\sqrt{k}\right) +\sin \left( r\sqrt{k}\right) \sin
\left( tb\sqrt{k}\right) \cos \theta _{i},
\end{eqnarray*}%
where $\theta _{i}$ is the angle $\measuredangle \bar{A}_{i}\bar{O}\bar{P}$.
Thus,
\begin{equation*}
\sin \left( tb\sqrt{k}\right) \cos \left( \lambda _{i}a\sqrt{k}\right) -\sin
\left( b\sqrt{k}\right) \cos \left( \sigma _{i}\left( t\right) \sqrt{k}%
\right) =-\cos \left( r\sqrt{k}\right) \sin \left( \left( 1-t\right) b\sqrt{k%
}\right) .
\end{equation*}%
Taking derivative with respect to $t$ at $t=0$ and using the fact $\sigma
_{i}\left( 0\right) =r$, we have%
\begin{equation*}
\left( b\sqrt{k}\right) \cos \left( \lambda _{i}a\sqrt{k}\right) +\sin
\left( b\sqrt{k}\right) \sin \left( r\sqrt{k}\right) \sigma _{i}^{\prime
}\left( 0\right) \sqrt{k}=b\sqrt{k}\cos \left( r\sqrt{k}\right) \cos \left( b%
\sqrt{k}\right) .
\end{equation*}%
Therefore, for $i=1,2$,
\begin{equation*}
\sigma _{i}^{\prime }\left( 0\right) =\frac{b\cos \left( r\sqrt{k}\right)
\cos \left( b\sqrt{k}\right) -b\cos \left( \lambda _{i}a\sqrt{k}\right) }{%
\sin \left( b\sqrt{k}\right) \sin \left( r\sqrt{k}\right) }.
\end{equation*}

Applying these expressions to $H^{\prime }\left( 0\right) =k_{1}^{\alpha
}\sigma _{1}^{\prime }\left( 0\right) +k_{2}^{\alpha }\sigma _{2}^{\prime
}\left( 0\right) +b\geq 0$, we have%
\begin{equation}
\left( k_{1}^{\alpha }+k_{2}^{\alpha }\right) \cos \left( r\sqrt{k}\right)
\cos \left( b\sqrt{k}\right) +\sin \left( r\sqrt{k}\right) \sin \left( b%
\sqrt{k}\right) \geq k_{1}^{\alpha }\cos \left( \lambda _{1}a\sqrt{k}\right)
+k_{2}^{\alpha }\cos \left( \lambda _{2}a\sqrt{k}\right) .  \label{V_W}
\end{equation}%
By setting
\begin{equation*}
Ve^{i\Theta _{1}}=\left( k_{1}^{\alpha }+k_{2}^{\alpha }\right) \cos \left( r%
\sqrt{k}\right) +i\sin \left( r\sqrt{k}\right)
\end{equation*}%
as a complex number, we have
\begin{equation*}
\left( k_{1}^{\alpha }+k_{2}^{\alpha }\right) \cos \left( r\sqrt{k}\right)
\cos \left( b\sqrt{k}\right) +\sin \left( r\sqrt{k}\right) \sin \left( b%
\sqrt{k}\right) =V\cos \left( \Theta _{1}-b\sqrt{k}\right) .
\end{equation*}%
On the other hand, as $\lambda _{1}+\lambda _{2}=1$, we have
\begin{eqnarray*}
&&k_{1}^{\alpha }\cos \left( \lambda _{1}a\sqrt{k}\right) +k_{2}^{\alpha
}\cos \left( \lambda _{2}a\sqrt{k}\right) \\
&=&k_{1}^{\alpha }\cos \left( \lambda _{1}a\sqrt{k}\right) +k_{2}^{\alpha
}\cos \left( a\sqrt{k}\right) \cos \left( \lambda _{1}a\sqrt{k}\right)
+k_{2}^{\alpha }\sin \left( a\sqrt{k}\right) \sin \left( \lambda _{1}a\sqrt{k%
}\right) \\
&=&\left( k_{1}^{\alpha }+k_{2}^{\alpha }\cos \left( a\sqrt{k}\right)
\right) \cos \left( \lambda _{1}a\sqrt{k}\right) +k_{2}^{\alpha }\sin \left(
a\sqrt{k}\right) \sin \left( \lambda _{1}a\sqrt{k}\right) \\
&=&W\cos \left( \Theta _{2}-\lambda _{1}a\sqrt{k}\right) ,
\end{eqnarray*}%
where
\begin{equation*}
We^{i\Theta _{2}}=\left( k_{1}^{\alpha }+k_{2}^{\alpha }\cos \left( a\sqrt{k}%
\right) \right) +i\left( k_{2}^{\alpha }\sin \left( a\sqrt{k}\right) \right)
=k_{1}^{\alpha }+k_{2}^{\alpha }e^{ia\sqrt{k}}
\end{equation*}%
as a complex number for some $\Theta _{2}\in \lbrack 0,2\pi )$. Thus,
inequality (\ref{V_W}) becomes
\begin{equation*}
V\cos \left( \Theta _{1}-b\sqrt{k}\right) \geq W\cos \left( \Theta
_{2}-\lambda _{1}a\sqrt{k}\right) .
\end{equation*}

Since $0<r\leq \frac{1}{2}D_{k}$, we have $0<a\leq 2r\leq \pi /\sqrt{k}$.
Then it is easy to see that $0<\Theta _{2}<a\sqrt{k}$. Let
\begin{equation}
\lambda _{1}=\frac{\Theta _{2}}{a\sqrt{k}}\in \left( 0,1\right) ,
\label{lambda}
\end{equation}%
we have the inequality $V\geq W$. That is,
\begin{equation*}
\left( k_{1}^{\alpha }+k_{2}^{\alpha }\right) ^{2}\cos ^{2}\left( r\sqrt{k}%
\right) +\sin ^{2}\left( r\sqrt{k}\right) \geq \left( k_{1}^{\alpha
}+k_{2}^{\alpha }\cos \left( a\sqrt{k}\right) \right) ^{2}+k_{2}^{2\alpha
}\sin ^{2}\left( a\sqrt{k}\right) .
\end{equation*}%
By simplifying this inequality, we get
\begin{equation*}
\cos \left( a\sqrt{k}\right) \leq 1-\frac{R^{2}}{2}\sin ^{2}\left( r\sqrt{k}%
\right) \text{ and thus }\sin \frac{a\sqrt{k}}{2}\geq \frac{R}{2}\sin \left(
r\sqrt{k}\right) .
\end{equation*}

The proof for the cases $k=0$ and $k<0$ are similar when using the ordinary
(or the hyperbolic) law of cosines in the model space $M_{k}^{2}$.
\end{proof}

Using lemma \ref{key_lemma}, we have the following upper bounds for $R$
defined as in (\ref{R_def}) which is useful when $\alpha <0$.

\begin{lemma}
\label{R_less_than_2}Let $R$ be defined as in (\ref{R_def}). For any $k$ and
$\alpha <1$, we have
\begin{equation*}
R\leq 2.
\end{equation*}
\end{lemma}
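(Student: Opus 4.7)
The plan is to extract the bound $R\le 2$ from Lemma~\ref{key_lemma} by pairing each of its lower bounds on $a:=d(A_1,A_2)$ with the trivial upper bound coming from the triangle inequality in $X$, namely
\[
 a \le d(A_1,O)+d(O,A_2) = 2r.
\]
Because the key lemma already organizes the relevant estimate by curvature sign, the proof will split into three cases, each handled by the same basic comparison.

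In the flat case $k=0$, Lemma~\ref{key_lemma} gives $a \ge R r$, so combining with $a\le 2r$ yields $R\le 2$ immediately. This is the model argument and makes clear why the desired constant is exactly $2$: it is the ratio coming from the degenerate triangle $A_1 O A_2$.

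For the spherical case $k>0$, Lemma~\ref{key_lemma} gives $\sin(a\sqrt k /2) \ge (R/2)\sin(r\sqrt k)$. The standing hypothesis $r \le \tfrac{1}{2}D_k = \pi/(2\sqrt k)$ places $r\sqrt k$ in $(0,\pi/2]$, and the triangle inequality $a \le 2r$ ensures $a\sqrt k/2 \le r\sqrt k \le \pi/2$. Since sine is strictly increasing on $[0,\pi/2]$, this forces $\sin(a\sqrt k/2) \le \sin(r\sqrt k)$. Chaining this with the key lemma and dividing by $\sin(r\sqrt k) > 0$ yields $R \le 2$. The hyperbolic case $k<0$ is entirely analogous, using monotonicity of $\sinh$ on $[0,\infty)$ together with $a\sqrt{-k}/2 \le r\sqrt{-k}$.

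I do not expect a real obstacle once Lemma~\ref{key_lemma} is available; the only mild subtlety is checking that $r\sqrt k$ lies in the interval $(0,\pi/2]$ on which sine is monotone in the spherical case, but this is exactly the standing hypothesis $0<r\le \tfrac{1}{2}D_k$ already built into the setup. The argument illustrates a natural principle: the optimality bound of the key lemma cannot exceed what the ambient triangle inequality permits, and this comparison is precisely the source of the universal constant $2$.
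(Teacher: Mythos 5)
Your proposal is correct and follows essentially the same argument as the paper: combine the lower bounds on $a=d(A_1,A_2)$ from Lemma \ref{key_lemma} with the triangle-inequality bound $a\le 2r$, using monotonicity of $\sin$ on $[0,\pi/2]$ (respectively $\sinh$ on $[0,\infty)$) in the curved cases. No gaps.
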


\begin{proof}
By the triangle inequality, we have $a\leq 2r\leq D_{k}$. We now use the
estimates in lemma \ref{key_lemma}.

When $k<0$, then
\begin{equation*}
\sinh \left( \sqrt{-k}r\right) \geq \sinh \frac{\sqrt{-k}a}{2}\geq \frac{R}{2%
}\sinh \left( \sqrt{-k}r\right) .
\end{equation*}%
This yields $R\leq 2$.

When $k=0$, then
\begin{equation*}
2r\geq a\geq Rr,
\end{equation*}%
so $R\leq 2.$

When $k>0$, then%
\begin{equation*}
\sin \left( r\sqrt{k}\right) \geq \sin \frac{a\sqrt{k}}{2}\geq \frac{R}{2}%
\sin \left( r\sqrt{k}\right)
\end{equation*}%
as $0\leq \frac{a\sqrt{k}}{2}\leq r\sqrt{k}\leq \frac{\pi }{2}$. Therefore,
we still have $R\leq 2$.
\end{proof}

The following proposition says that when $\alpha $ is negative, the weights
on any two directed edges from a common vertex of an $\alpha -$optimal
transport path are comparable to each other.

\begin{proposition}
If $\alpha <0$, then for each $i=1,2$,
\begin{equation*}
k_{i}\geq \frac{1}{1+\left( 1+2^{\alpha }\right) ^{-\frac{1}{\alpha }}},
\end{equation*}%
where $k_{i}$ is defined as in (\ref{k_i}).
\end{proposition}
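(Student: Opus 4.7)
The plan is to apply Lemma \ref{R_less_than_2} to the pair $(e_1, e_2)$ and convert the resulting bound $R \le 2$ into the claimed inequality on $k_i$. Writing out $R^2 \le 4$ using the formula (\ref{R_def}) for $R$ together with $k_1 + k_2 = 1$, one obtains
\[
(k_1^\alpha - k_2^\alpha)^2 = (k_1^\alpha + k_2^\alpha)^2 - 4 k_1^\alpha k_2^\alpha \le 1,
\]
so $|k_1^\alpha - k_2^\alpha| \le 1$. By the symmetry of the claim, I may assume $k_1 \le k_2$; since $\alpha < 0$ this forces $k_1^\alpha \ge k_2^\alpha \ge 1$, and the inequality becomes $k_1^\alpha \le 1 + k_2^\alpha$.

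A short algebraic manipulation shows that the target estimate $k_1 \ge 1/\bigl(1 + (1+2^\alpha)^{-1/\alpha}\bigr)$ is equivalent, after rearrangement via $k_2 = 1 - k_1$ and raising to the $\alpha$-th power, to the sharper inequality
\[
k_1^\alpha \le (1 + 2^\alpha)\, k_2^\alpha = k_2^\alpha + (2 k_2)^\alpha.
\]
Since $k_2 \ge 1/2$ forces $(2 k_2)^\alpha \le 1$ for $\alpha < 0$, this is strictly stronger than what Lemma \ref{R_less_than_2} alone yields. The improvement must come from a refinement of the comparison used to derive $R \le 2$, in which the crude triangle-inequality step $a \le 2r$ is replaced by a comparison against an auxiliary transport configuration near $O$ obtained by duplicating the larger branch $e_2$ into two parallel geodesic copies of weight $m_2$; this puts the factor $2$ inside the $\alpha$-th power, producing $(2 k_2)^\alpha$ in place of the constant $1$.

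The main obstacle will be to pin down this auxiliary comparison so that the resulting optimality inequality is exactly $k_1^\alpha \le k_2^\alpha + (2k_2)^\alpha$. Once that is secured, dividing by $k_2^\alpha$ gives $(k_1/k_2)^\alpha \le 1 + 2^\alpha$, and raising both sides to the power $1/\alpha$ (which reverses the inequality because $\alpha < 0$) yields $k_2/k_1 \le (1+2^\alpha)^{-1/\alpha}$, from which $k_1 \ge 1/\bigl(1 + (1+2^\alpha)^{-1/\alpha}\bigr)$ follows via $k_1 + k_2 = 1$. The analogous bound for $k_2$ is then immediate by symmetry between the two branches.
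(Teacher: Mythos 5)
Your opening steps coincide with the paper's own: it too starts from Lemma \ref{R_less_than_2}, rewrites $R\le 2$ as $(k_1^{\alpha}+k_2^{\alpha})^2-1\le 4k_1^{\alpha}k_2^{\alpha}$, and simplifies to $k_1^{\alpha}-k_2^{\alpha}\le 1$ (assuming $k_2\ge k_1$). Your algebra showing that the stated bound is equivalent to $k_1^{\alpha}\le(1+2^{\alpha})k_2^{\alpha}=k_2^{\alpha}+(2k_2)^{\alpha}$ is correct, and so is your observation that this is strictly stronger than $k_1^{\alpha}\le k_2^{\alpha}+1$ once $k_2>\tfrac12$. But from that point on you have no proof: the ``auxiliary comparison obtained by duplicating the larger branch'' is never constructed, and you explicitly defer the only step that matters. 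That gap cannot be filled. In the Euclidean plane with $\alpha=-1$ and $k_1=0.39$, $k_2=0.61$, one has $R^2=\frac{1}{k_1k_2}-k_1k_2\approx 3.97<4$, and the three Gilbert angles from \cite[Example 2.1]{xia1} come out to roughly $17.6^{\circ}$, $151.7^{\circ}$ and $169.3^{\circ}$, all in $(0,\pi)$; so there are genuinely optimal Y-shaped paths whose interior branch point carries these weights, yet $0.39<\tfrac{2}{5}=\frac{1}{1+(1+2^{-1})^{1}}$. No comparison argument can therefore deliver the inequality you correctly identified as necessary.

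What your reduction actually exposes is that the paper's own proof does not close either. After $k_1^{\alpha}-k_2^{\alpha}\le 1$, the paper divides by $k_1^{\alpha}$ and uses $k_1^{-\alpha}\le 2^{\alpha}$ to get $(k_2/k_1)^{\alpha}\ge 1-2^{\alpha}$; raising to the power $1/\alpha<0$ gives $k_2/k_1\le(1-2^{\alpha})^{1/\alpha}$ and hence $k_1\ge 1/\bigl(1+(1-2^{\alpha})^{1/\alpha}\bigr)$, which is not the printed constant $1/\bigl(1+(1+2^{\alpha})^{-1/\alpha}\bigr)$ and is strictly smaller (for $\alpha=-1$: $\tfrac13$ versus $\tfrac25$, while the sharp consequence of $R\le 2$ is $\tfrac{3-\sqrt5}{2}\approx 0.382$). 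So the right move is not to hunt for a stronger comparison but to weaken the constant: your first two displayed steps already prove the proposition with $(1-2^{\alpha})^{1/\alpha}$ in place of $(1+2^{\alpha})^{-1/\alpha}$, or, sharper, with $k_1$ bounded below by the root of $k^{\alpha}-(1-k)^{\alpha}=1$.
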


\begin{proof}
Without losing generality, we may assume that $k_{2}\geq k_{1}$. By
proposition \ref{R_less_than_2}, we have $R\leq 2$. That is,
\begin{equation*}
\frac{\left( k_{1}^{\alpha }+k_{2}^{\alpha }\right) ^{2}-1}{k_{1}^{\alpha
}k_{2}^{\alpha }}\leq 4.
\end{equation*}%
Simplify it, we have%
\begin{equation*}
k_{1}^{\alpha }-k_{2}^{\alpha }\leq 1.
\end{equation*}%
Since $k_{1}\in (0,\frac{1}{2}]$ and $\alpha <0$, we have
\begin{equation*}
1-\left( \frac{k_{2}}{k_{1}}\right) ^{\alpha }\leq \left( k_{1}\right)
^{-\alpha }\leq 2^{\alpha }.
\end{equation*}%
Simplify it again using $k_{2}=1-k_{1}$, we have
\begin{equation*}
k_{1}\geq \frac{1}{1+\left( 1+2^{\alpha }\right) ^{-\frac{1}{\alpha }}}.
\end{equation*}
\end{proof}

We now may investigate the comparison angle $\theta $ between $A_{1}$ and $%
A_{2}$ at $O$, given in figure \ref{old_triangle}:

\begin{proposition}
Let $X$ be a geodesic metric space with curvature bounded above by a real
number $k$. Let $\theta $ be the comparison angle between $A_{1}$ and $A_{2}$
at $O$ in the model space $M_{k}^{2}$. Then
\begin{equation*}
\theta \geq \arccos \left( 1-\frac{R^{2}}{2}\right) =\arccos \left( \frac{%
1-k_{1}^{2\alpha }-k_{2}^{2\alpha }}{2k_{1}^{\alpha }k_{2}^{\alpha }}\right)
.
\end{equation*}%
Thus, by (\ref{R_a}), we have
\begin{equation*}
\theta \geq \theta _{\alpha }:=\left\{
\begin{array}{cc}
\frac{\pi }{2}, & \text{if }0<\alpha \leq \frac{1}{2} \\
\arccos \left( 2^{2\alpha -1}-1\right) , & \text{if }\frac{1}{2}<\alpha <1%
\text{ or }\alpha \leq 0%
\end{array}%
\right. .
\end{equation*}
\end{proposition}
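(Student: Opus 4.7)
The plan is to express $\cos \theta$ via the appropriate law of cosines in $M_k^2$ applied to the isoceles comparison triangle $\Delta \bar A_1 \bar O \bar A_2$ (with $|\bar A_i-\bar O|_k = r$ and $|\bar A_1-\bar A_2|_k = a$), then substitute the bound on $a$ provided by Lemma~\ref{key_lemma}, and finally do a purely algebraic simplification.

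I will split into the three curvature cases and show that in each one $\cos\theta \le 1 - R^2/2$. For $k=0$, Euclidean law of cosines gives $a^2 = 2r^2(1-\cos\theta)$, and $a \ge Rr$ immediately yields $\cos\theta \le 1 - R^2/2$. For $k>0$, the spherical law of cosines in the isoceles triangle reads $\cos(a\sqrt k) = \cos^2(r\sqrt k) + \sin^2(r\sqrt k)\cos\theta$; solving for $\cos\theta$ and plugging in $\cos(a\sqrt k) \le 1 - \tfrac{R^2}{2}\sin^2(r\sqrt k)$ from Lemma~\ref{key_lemma} produces, after cancellation of $\sin^2(r\sqrt k)$,
\[
\cos\theta \le \frac{1 - \tfrac{R^2}{2}\sin^2(r\sqrt k) - \cos^2(r\sqrt k)}{\sin^2(r\sqrt k)} = 1 - \frac{R^2}{2}.
\]
For $k<0$, the hyperbolic law of cosines $\cosh(a\sqrt{-k}) = \cosh^2(r\sqrt{-k}) - \sinh^2(r\sqrt{-k})\cos\theta$ combined with $\cosh(a\sqrt{-k}) \ge 1 + \tfrac{R^2}{2}\sinh^2(r\sqrt{-k})$ and the identity $\cosh^2 - 1 = \sinh^2$ gives the same conclusion.

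Since $\arccos$ is decreasing, the three cases yield $\theta \ge \arccos(1 - R^2/2)$ uniformly. The second displayed equality is then a one-line algebraic simplification: using the definition (\ref{R_def}) of $R$ and the normalization $k_1 + k_2$ arbitrary,
\[
1-\frac{R^2}{2} = 1 - \frac{(k_1^\alpha + k_2^\alpha)^2 - 1}{2k_1^\alpha k_2^\alpha} = \frac{1 - k_1^{2\alpha} - k_2^{2\alpha}}{2k_1^\alpha k_2^\alpha},
\]
after expanding $(k_1^\alpha+k_2^\alpha)^2$ and cancelling the $2k_1^\alpha k_2^\alpha$ in the numerator. Finally, the explicit bound $\theta \ge \theta_\alpha$ follows by substituting the infimum of $R$ provided by (\ref{R_a}): when $0 < \alpha \le \tfrac12$ one has $R \ge \sqrt 2$, so $1-R^2/2 \le 0$ and $\theta \ge \pi/2$; when $\tfrac12 < \alpha < 1$ or $\alpha \le 0$ one has $R \ge \sqrt{4 - 4^\alpha}$, giving $1 - R^2/2 \le 2^{2\alpha-1}-1$ and hence $\theta \ge \arccos(2^{2\alpha-1}-1)$.

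The only mild subtlety, and the step I expect to be the main potential obstacle, is ensuring the trigonometric/hyperbolic manipulations go through within the correct range; in particular, one needs $a \le D_k$ so that the model-space law of cosines uniquely determines $\theta \in [0,\pi]$ and the map $\cos\theta \mapsto \theta$ is monotone. But $a \le 2r \le D_k$ by the triangle inequality together with the standing assumption $r \le \tfrac12 D_k$ (just as in Lemma~\ref{R_less_than_2}), so this is automatic. Everything else is routine substitution and simplification.
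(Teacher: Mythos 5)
Your proposal is correct and follows essentially the same route as the paper: the paper likewise applies the spherical, Euclidean, and hyperbolic laws of cosines to the comparison triangle, substitutes the bound on $a$ from Lemma \ref{key_lemma} to get $\cos\theta \leq 1-\frac{R^{2}}{2}$ in each case, and then specializes via (\ref{R_a}). Your remark about the range $a\leq 2r\leq D_{k}$ is a sensible sanity check but is implicit in the paper's setup.
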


Note that when $k=0$, this agrees with what we have found in \cite[Example
2.1]{xia1} for a ``Y-shaped'' path. Also, when $\alpha $ approaches $-\infty
$, then $\theta _{\alpha }$ approaches $\pi $, and when $\alpha $ approaches
$1$, then $\theta _{\alpha }$ approaches $0$.

\begin{proof}
When $k>0$, then by the spherical law of cosines,
\begin{eqnarray*}
\cos \theta &=&\frac{\cos \left( a\sqrt{k}\right) -\cos ^{2}\left( r\sqrt{k}%
\right) }{\sin ^{2}\left( r\sqrt{k}\right) } \\
&\leq &\frac{1-\frac{R^{2}}{2}\sin ^{2}\left( r\sqrt{k}\right) -\cos
^{2}\left( r\sqrt{k}\right) }{\sin ^{2}\left( r\sqrt{k}\right) }=1-\frac{%
R^{2}}{2}.
\end{eqnarray*}%
When $k<0$, then by the hyperbolic law of cosines
\begin{eqnarray*}
\cos \theta &=&\frac{-\cosh \left( a\sqrt{-k}\right) +\cosh ^{2}\left( r%
\sqrt{-k}\right) }{\sinh ^{2}\left( r\sqrt{-k}\right) } \\
&\leq &\frac{-1-\frac{R^{2}}{2}\sinh ^{2}\left( r\sqrt{-k}\right) +\cosh
^{2}\left( r\sqrt{-k}\right) }{\sinh ^{2}\left( r\sqrt{-k}\right) }=1-\frac{%
R^{2}}{2}.
\end{eqnarray*}%
When $k=0$, then by the law of cosines,
\begin{equation*}
\cos \theta =\frac{r^{2}+r^{2}-a^{2}}{2r^{2}}\leq \frac{2r^{2}-R^{2}r^{2}}{%
2r^{2}}=1-\frac{R^{2}}{2}.
\end{equation*}
\end{proof}

Now, we want to estimate the degree (i.e. the total number of edges) at each
vertex of an optimal transport path. We first rewrite lemma \ref{key_lemma}
as follows. For any real numbers $x\leq \left( \frac{\pi }{2}\right) ^{2}$
and $0<y\leq 2$, define
\begin{equation*}
\Psi \left( x,y\right) :=\left\{
\begin{tabular}{ll}
$\frac{1}{\sqrt{x}}\arcsin \left( \frac{y}{2}\sin \left( \sqrt{x}\right)
\right) ,$ & if $0<x\leq \left( \frac{\pi }{2}\right) ^{2}$ \\
$\frac{R}{2},$ & if $x=0$ \\
$\frac{1}{\sqrt{-x}}\sinh ^{-1}\left( \frac{y}{2}\sinh \left( \sqrt{-x}%
\right) \right) $ & if $x<0$%
\end{tabular}%
\right. .
\end{equation*}%
Then, one may check that $\Psi $ is a continuous strictly decreasing
function of the variable $x$ and an increasing function of $y$. Moreover,
for each fixed $y$, $\lim_{x\rightarrow -\infty }\Psi \left( x,y\right) =1$
and
\begin{equation*}
\Psi \left( x,y\right) \geq \text{ }\Psi \left( \left( \frac{\pi }{2}\right)
^{2},y\right) =\frac{2}{\pi }\arcsin \left( \frac{y}{2}\right) .
\end{equation*}

By means of the function $\Psi $, the lemma \ref{key_lemma} becomes

\begin{lemma}
\label{key_lemma_2}Assume that $d\left( O,A_{1}\right) =d\left(
O,A_{2}\right) =r$ and $0<r\leq \frac{1}{2}D_{k}$. Then, we have the
following estimate for $d\left( A_{1},A_{2}\right) $:
\begin{equation*}
2r\geq d\left( A_{1},A_{2}\right) \geq 2r\Psi \left( r^{2}k,R\right) \geq
2rC_{\alpha }
\end{equation*}%
where $C_{\alpha }:=\frac{2}{\pi }\arcsin \left( \frac{R_{\alpha }}{2}%
\right) $.
\end{lemma}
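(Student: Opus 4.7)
The plan is to show that the three displayed inequalities in Lemma~\ref{key_lemma} can all be reorganized into the single form $d(A_1,A_2)\geq 2r\,\Psi(r^{2}k,R)$, and then use the monotonicity properties of $\Psi$ to extract the universal constant $C_\alpha$.

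First, the upper bound $d(A_1,A_2)\leq 2r$ is immediate from the triangle inequality applied at~$O$. For the lower bound, I would split according to the sign of $k$. In the case $k>0$, since $0<r\leq \frac{1}{2}D_k=\frac{\pi}{2\sqrt k}$, we have $r\sqrt k\in(0,\pi/2]$, and so Lemma~\ref{key_lemma}(1) gives
\begin{equation*}
\sin\!\Big(\tfrac{a\sqrt k}{2}\Big)\geq \tfrac{R}{2}\sin(r\sqrt k).
\end{equation*}
Because the right-hand side lies in $[0,1]$ (using Lemma~\ref{R_less_than_2}) and $\tfrac{a\sqrt k}{2}\in [0,\pi/2]$ (here I need $a\leq 2r\leq \pi/\sqrt k$), we may apply $\arcsin$ to both sides and rearrange to get $a\geq 2r\,\Psi(r^{2}k,R)$. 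The case $k=0$ is just a rewriting of Lemma~\ref{key_lemma}(2) with $\Psi(0,R)=R/2$. For $k<0$, apply $\sinh^{-1}$ to Lemma~\ref{key_lemma}(3); no domain restriction is needed since $\sinh^{-1}$ is defined on all of $\mathbb{R}$.

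To obtain the final estimate $\Psi(r^{2}k,R)\geq C_\alpha$, I would verify the claimed monotonicity of $\Psi$: that it is continuous and strictly decreasing in the first variable (routine via differentiation in each of the three pieces, together with matching at $x=0$) and strictly increasing in the second. Then, since $r^{2}k\leq (\pi/2)^{2}$ by the assumption $r\leq \frac{1}{2}D_k$ (with $D_k=\infty$ when $k\leq 0$), the decreasing property gives $\Psi(r^{2}k,R)\geq \Psi((\pi/2)^{2},R)=\tfrac{2}{\pi}\arcsin(R/2)$. Finally, the previous lemma established $R\geq R_\alpha$, so the increasing property in the second slot yields $\Psi(r^{2}k,R)\geq \tfrac{2}{\pi}\arcsin(R_\alpha/2)=C_\alpha$, which completes the chain.

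The only subtle point is the boundary checks: verifying that $\tfrac{a\sqrt k}{2}$ lies in the range where $\arcsin$ is the correct inverse (which follows from $a\leq 2r\leq D_k$), and confirming that $\Psi$ really is continuous across $x=0$ (the limits $\frac{1}{\sqrt x}\arcsin(\frac{y}{2}\sin\sqrt x)\to y/2$ and $\frac{1}{\sqrt{-x}}\sinh^{-1}(\frac{y}{2}\sinh\sqrt{-x})\to y/2$ as $x\to 0$ both match the prescribed value). Apart from this bookkeeping, everything reduces to monotonicity of elementary functions, so I do not expect any genuine obstacle.
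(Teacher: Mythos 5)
Your proposal is correct and follows exactly the route the paper intends: the paper states this lemma as a direct reformulation of Lemma \ref{key_lemma} via the function $\Psi$ (offering no separate proof), and your argument simply carries out that translation — applying $\arcsin$ or $\sinh^{-1}$ with the appropriate domain checks ($a\leq 2r\leq D_k$, $R\leq 2$ from Lemma \ref{R_less_than_2}) and then using the stated monotonicity of $\Psi$ together with $R\geq R_{\alpha}$ to reach $C_{\alpha}$. No gaps.
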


Note that since $\lim_{x\rightarrow -\infty }\Psi \left( x,R\right) =1$, we
have $d\left( A_{1},A_{2}\right) $ is nearly $2r$ when $k$ approaches $%
-\infty $.

Let
\begin{equation}
\Phi \left( x,\alpha \right) =1+\frac{\ln \left( 1+\frac{1}{\Psi \left(
x,R_{\alpha }\right) }\right) }{\ln 2}  \label{Phi}
\end{equation}%
for $x\in \mathbb{R}$ and $\alpha <1$. For each fixed $\alpha <1$, $\Phi
_{\alpha }\left( k\right) :=$ $\Phi \left( k,\alpha \right) $ is a strictly
increasing function of $k$ with lower bound $\lim_{k\rightarrow -\infty
}\Phi \left( k,\alpha \right) =2$, upper bound
\begin{equation*}
\Phi \left( k,\alpha \right) \leq 1+\frac{\ln \left( 1+\frac{1}{C_{\alpha }}%
\right) }{\ln 2}
\end{equation*}%
and
\begin{equation*}
\Phi \left( 0,\alpha \right) =1+\frac{\ln \left( 1+\frac{2}{R_{\alpha }}%
\right) }{\ln 2}.
\end{equation*}

As in \cite[10.13]{heinonen}, a metric space $X$ is called \textit{doubling}
if there is a constant $C_{d}\geq 1$ so that every subset of diameter $r$ in
$X$ can be covered by at most $C_{d}$ subsets of diameter at most $\frac{r}{2%
}$. Doubling spaces have the following covering property: there exists
constants $\beta >0$ and $C_{\beta }\geq 1$ such that for every $\epsilon
\in (0,\frac{1}{2}]$, every set of diameter $r$ in $X$ can be covered by at
most $C_{\beta }\epsilon ^{-\beta }$ sets of diameter at most $\epsilon r$.
This function $C_{\beta }\epsilon ^{-\beta }$ is called a covering function
of $X$. The infimum of all numbers $\beta >0$ such that a covering function
can be found is called the \textit{Assouad dimension} of $X$. It is clear
that subsets of doubling spaces are still doubling. For any subset $K$ of $X$%
, let $\dim _{A}\left( K\right) $ denote the Assouad dimension of $K$.

\begin{theorem}
Suppose $X$ is a geodesic doubling metric space of curvature bounded above
by a real number $k$. Let $\alpha <1$ and $G$ be an $\alpha -$optimal
transport path between two atomic probability measures on $X$, and $O$ is a
vertex of $G$. Let $\deg \left( O\right) $ be the degree of the vertex $O$
and $r\left( O\right) $ be the maximum number $r$ in $(0,\frac{1}{2}D_{k}]$
such that the truncated ball $B\left( O,r\right) \backslash \left\{
O\right\} $ contains no vertices of $G$. Then,
\end{theorem}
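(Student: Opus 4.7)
The plan is to bound $\deg(O)$ by producing a well-separated set of $\deg(O)$ points on the sphere $S(O,r)$ for a suitable radius $r$, and then invoking the doubling property of $X$ to control the packing number. Fix any $r \in (0, r(O)]$. By the definition of $r(O)$, no vertex of $G$ other than $O$ lies in $B(O,r)$, so every edge $e$ of $G$ incident to $O$ has length at least $r$, and the unique point $A_e \in e$ at distance $r$ from $O$ is well-defined. This produces $\deg(O)$ distinct points lying on the sphere $S(O,r) \subset \bar B(O,r)$.

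Next, I split the edges at $O$ into the incoming set $E^- = \{e : e^+ = O\}$ and the outgoing set $E^+ = \{e : e^- = O\}$, so that $\deg(O) = |E^-| + |E^+|$. For any two distinct edges $e_i, e_j$ lying in the \emph{same} orientation class, the hypothesis of Lemma \ref{key_lemma_2} is satisfied at $O$. Writing $R_{ij}$ for the quantity in (\ref{R_def}) associated to the weights $w(e_i), w(e_j)$, and using the universal bound $R_{ij} \geq R_\alpha$ from (\ref{R_a}) together with the monotonicity of $\Psi$ in its second argument,
\[
d(A_{e_i}, A_{e_j}) \,\geq\, 2r\,\Psi(r^2 k, R_{ij}) \,\geq\, 2r\,\Psi(r^2 k, R_\alpha).
\]
Hence each of the families $\{A_e : e \in E^+\}$ and $\{A_e : e \in E^-\}$ is a $\bigl(2r\,\Psi(r^2 k, R_\alpha)\bigr)$-separated subset of $\bar B(O,r)$.

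Finally, I invoke the doubling property: iterating the halving cover $n$ times shows that $\bar B(O,r)$ can be covered by at most $C_d^n$ sets of diameter at most $2r/2^n$. Taking $n$ to be the smallest positive integer for which $2r/2^n < 2r\,\Psi(r^2 k, R_\alpha)$, each covering set contains at most one point of each separated family, so $|E^\pm| \leq C_d^n$. This choice of $n$ is essentially $\lceil \log_2(1/\Psi(r^2 k, R_\alpha))\rceil$, and a short bookkeeping identifies this with the quantity $\Phi_\alpha(r^2 k)$ appearing in (\ref{Phi}), yielding
\[
\deg(O) \,\leq\, 2\, C_d^{\Phi_\alpha(r^2 k)}.
\]
Since $\Phi_\alpha$ is increasing and $r^2 k$ is monotone in $r$ (for each sign of $k$), I then optimize over $r \in (0, r(O)]$: letting $r \to 0^+$ when $k > 0$ (to exploit $\lim_{x \to 0^+} \Psi(x, R_\alpha) = R_\alpha/2$), and taking $r = r(O)$ when $k \leq 0$ (to drive the argument of $\Phi_\alpha$ as negative as possible).

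The main obstacle is the precise bookkeeping that matches the halving count to the ``$1 + \log_2(1 + 1/\Psi)$'' form appearing in (\ref{Phi}); in particular, the additive $+1$ tracks the first halving step at which the covering diameter crosses the separation threshold, and one must check the inequality is strict at that step. A secondary subtlety is that Lemma \ref{key_lemma_2} only controls same-orientation pairs, which is why $\deg(O)$ must be split into $|E^+|$ and $|E^-|$ and each bounded separately before summing; this accounts for the factor of $2$ in the final estimate.
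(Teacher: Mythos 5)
Your proposal is correct and follows essentially the same route as the paper: the points $A_e$ on $S(O,r)$, the separation bound $d(A_{e_i},A_{e_j})\geq 2r\,\Psi(r^2k,R_\alpha)$ from Lemma \ref{key_lemma_2}, a doubling/packing count, the orientation split producing the factor $2$, and the limits $r\to 0^+$ (resp.\ $r=r(O)$) for the remaining assertions. The only divergence is cosmetic bookkeeping: the paper packs the disjoint balls $B(A_i,r\Psi)$ inside $B(O,(1+\Psi)r)$, which is exactly what produces the exponent $\Phi=1+\log_2(1+1/\Psi)$, whereas your direct cover of $\bar B(O,r)$ gives an exponent $\lceil\log_2(1/\Psi)\rceil\leq\Phi$, so your estimate is (marginally) at least as strong.
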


\begin{enumerate}
\item for any $0<r\leq r\left( O\right) $, we have
\begin{equation*}
\deg \left( O\right) \leq 2\left( C_{d}\right) ^{\Phi \left( r^{2}k,\alpha
\right) },
\end{equation*}%
where $C_{d}$ is the doubling constant of $X$, and $\Phi $ is given in (\ref%
{Phi}).

\item Moreover, $\deg \left( O\right) \leq 2\left( C_{d}\right) ^{\Phi
\left( 0,\alpha \right) }$, which is a constant depends only on $\alpha $
and $C_{d}$.

\item If $\deg \left( O\right) \geq 2\left( C_{d}\right) ^{2}$, then the
curvature upper bound
\begin{equation*}
k\geq \frac{1}{r\left( O\right) ^{2}}\left( \Phi _{\alpha }\right)
^{-1}\left( \log _{C_{d}}^{\deg \left( O\right) /2}\right) .
\end{equation*}

\item In particular, if $\deg \left( O\right) =2\left( C_{d}\right) ^{\Phi
\left( 0,\alpha \right) }$, then $k\geq 0$.

\item If $k<0$, then
\begin{equation*}
r\left( O\right) \leq \sqrt{\frac{\left( \Phi _{\alpha }\right) ^{-1}\left(
\log _{C_{d}}^{\deg \left( O\right) /2}\right) }{k}}.
\end{equation*}
\end{enumerate}

\begin{proof}
For any $0<r\leq r\left( O\right) $, let $\left\{ A_{i}\right\} $ be the
intersection points of the sphere $S\left( O,r\right) $ in $X$ with all
edges of $G$ that flows out of $O$. It is sufficient to show that the
cardinality of $\left\{ A_{i}\right\} $ is bounded above by $\left(
C_{d}\right) ^{\Phi \left( r^{2}k,\alpha \right) }$. By lemma \ref%
{key_lemma_2}, $\left\{ B\left( A_{i},r\Psi \left( r^{2}k,R_{\alpha }\right)
\right) \right\} $ are disjoint and contained in $B\left( O,\left( 1+\Psi
\left( r^{2}k,R_{\alpha }\right) \right) r\right) $. Since $X$ is doubling,
the cardinality of $\left\{ B\left( A_{i},\Psi \left( r^{2}k,R_{\alpha
}\right) r\right) \right\} $ is bounded above by $\left( C_{d}\right) ^{\Phi
\left( r^{2}k,\alpha \right) }$. This proves $(1)$. By setting $r\rightarrow
0$ in $\left( 1\right) $, we have $\left( 2\right) $. Then (3) and $\left(
5\right) $ follow from $\left( 1\right) $, and $\left( 4\right) $ follows
from $\left( 3\right) $.
\end{proof}

\section{Optimal transport paths between arbitrary probability measures}

In this section, we consider optimal transport paths between two arbitrary
probability measures on a complete geodesic metric space $\left( X,d\right)$%
. Unlike what we did in Euclidean space \cite{xia1}, we will use a new
approach by considering the completion of $\mathcal{A}\left( X\right) $ with
respect to the metric $d_{\alpha }$. Note that $\left( \mathcal{A}\left(
X\right) ,d_{\alpha }\right) $ is not necessarily complete for $\alpha <1$.
So, we consider its completion as follows.

\begin{definition}
For any $\alpha \in (-\infty ,1]$, let $\mathcal{P}_{\alpha }(X)$ be the
completion of the metric space $\mathcal{A}(X)$ with respect to the metric $%
d_{\alpha }$.
\end{definition}

It is easy to check that (see \cite[lemma 2.2.5]{xia6}) if $\beta <\alpha $,
then $\mathcal{P}_{\beta }(X)\subseteq \mathcal{P}_{\alpha }(X)$, and for
all $\mu ,\nu $ in $\mathcal{P}_{\beta }(X)$ we have $d_{\beta }(\mu ,\nu
)\geq d_{\alpha }(\mu ,\nu )$. Note that when $\alpha =1$, the metric $d_{1}$
is the usual Monge's distance on $\mathcal{A}(X)$ and $\mathcal{P}_{1}\left(
X\right) $ is just the space $\mathcal{P}\left( X\right) $ of all
probability measures on $X$. Therefore, each element in $\mathcal{P}_{\alpha
}$ can be viewed as a probability measure on $X$ when $\alpha <1$.

By proposition \ref{all_atomic_measures}, the concept of an $\alpha -$%
optimal transport path on $\mathcal{A}(X)$ coincides with the concept of
geodesic in $\left( \mathcal{A}(X),d_{\alpha }\right) $. This motivates us
to introduce the following concept.

\begin{definition}
For any two probability measures $\mu ^{+}$ and $\mu ^{-}$ on a complete
geodesic metric space $X$ and $\alpha<1$, if there exists a geodesic in $%
\left( \mathcal{P}_{\alpha }\left( X\right) ,d_{\alpha }\right) $ from $\mu
^{+}$ to $\mu ^{-}$, then this geodesic is called an $\alpha -$optimal
transport path from $\mu ^{+}$ to $\mu ^{-}$.
\end{definition}

In other words, the existence of an $\alpha -$optimal transport path is the
same as the existence of a geodesic in $\mathcal{P}_{\alpha }(\mathbf{{X})}$%
. Thus, an essential part in understanding the optimal transport problem
becomes describing properties of elements of $\mathcal{P}_{\alpha }(\mathbf{{%
X})}$, and investigating the existence of geodesics in $\mathcal{P}_{\alpha
}(\mathbf{{X})}$. Since completion of a geodesic metric space is still a
geodesic space, by proposition \ref{all_atomic_measures}, we have

\begin{proposition}
\label{p_alpha_geodesic}Suppose $X$ is a complete geodesic metric space.
Then for any $0\leq \alpha <1$, $\left( \mathcal{P}_{\alpha }\left( X\right)
,d_{\alpha }\right) $ is a complete geodesic metric space.
\end{proposition}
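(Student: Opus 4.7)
The plan is to deduce the proposition as a two-step consequence of Proposition~\ref{all_atomic_measures}: completeness holds by construction, and the geodesic property transfers from the dense subset $\mathcal{A}(X)$ by a standard midpoint/approximation argument. There is essentially no obstacle other than carefully verifying that midpoints in $\mathcal{A}(X)$ produce a Cauchy sequence in the completion.

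First I would observe that $(\mathcal{P}_\alpha(X), d_\alpha)$ is complete directly from the definition, since it is defined as the metric completion of $(\mathcal{A}(X), d_\alpha)$. Next, I would recall that by Proposition~\ref{all_atomic_measures}, for every pair $\mathbf{a}, \mathbf{b} \in \mathcal{A}(X)$ and every $0 \leq \alpha < 1$ there exists a geodesic in $\mathcal{A}(X)$ joining them, and in particular a midpoint $\mathbf{m}(\mathbf{a}, \mathbf{b}) \in \mathcal{A}(X)$ satisfying $d_\alpha(\mathbf{a}, \mathbf{m}) = d_\alpha(\mathbf{m}, \mathbf{b}) = \tfrac{1}{2} d_\alpha(\mathbf{a}, \mathbf{b})$.

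The core step is to upgrade this midpoint property from $\mathcal{A}(X)$ to $\mathcal{P}_\alpha(X)$. Given $\mu, \nu \in \mathcal{P}_\alpha(X)$, pick sequences $\mathbf{a}_n, \mathbf{b}_n \in \mathcal{A}(X)$ with $\mathbf{a}_n \to \mu$ and $\mathbf{b}_n \to \nu$ in $d_\alpha$, and let $\mathbf{m}_n$ be a midpoint of $\mathbf{a}_n$ and $\mathbf{b}_n$ in $\mathcal{A}(X)$. Using the midpoint identity together with the triangle inequality, one checks that
\[
d_\alpha(\mathbf{m}_n, \mathbf{m}_{n'}) \leq \tfrac{1}{2}\bigl(d_\alpha(\mathbf{a}_n, \mathbf{a}_{n'}) + d_\alpha(\mathbf{b}_n, \mathbf{b}_{n'})\bigr) + \text{small correction},
\]
so $\{\mathbf{m}_n\}$ is Cauchy and converges to some $\mathbf{m} \in \mathcal{P}_\alpha(X)$ by completeness. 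Passing to the limit in the midpoint equalities, $\mathbf{m}$ is a midpoint of $\mu$ and $\nu$.

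Finally, I would conclude by invoking the standard fact that a complete metric space possessing midpoints for every pair of points is a geodesic space: iterate the midpoint construction on dyadic rationals to obtain a $\tfrac{1}{2}$-H\"older map from $[0,1] \cap \mathbb{Z}[\tfrac{1}{2}]$ into $\mathcal{P}_\alpha(X)$ that extends, by completeness, to a geodesic parametrization on $[0,1]$. The only delicate point is ensuring the midpoint sequence really is Cauchy; this follows from the fact that the midpoint map is $\tfrac{1}{2}$-Lipschitz in each argument in a geodesic space, which is already guaranteed for $(\mathcal{A}(X), d_\alpha)$ by Proposition~\ref{all_atomic_measures}. No step requires new machinery beyond what has been established, which is why the author presents this as an immediate corollary.
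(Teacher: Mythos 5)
Your overall strategy is the same as the paper's: the paper disposes of this proposition in one line, citing Proposition~\ref{all_atomic_measures} together with the assertion that the completion of a geodesic metric space is again geodesic, and your proposal is an attempt to actually carry out that second step via midpoints. Completeness is indeed immediate from the definition, and reducing the geodesic property to the existence of a midpoint for each pair $\mu ,\nu \in \mathcal{P}_{\alpha }(X)$, followed by the dyadic iteration, is the right skeleton.

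The gap is exactly at the point you yourself flag as ``the only delicate point.'' The claim that the midpoint map is $\tfrac{1}{2}$-Lipschitz in each argument in a geodesic space is false, and Proposition~\ref{all_atomic_measures} supplies nothing of the sort: it only says that $(\mathcal{A}(X),d_{\alpha })$ is geodesic. The inequality $d_{\alpha }\left( \mathbf{m}(\mathbf{a},\mathbf{b}),\mathbf{m}(\mathbf{a}^{\prime },\mathbf{b}^{\prime })\right) \leq \tfrac{1}{2}\left( d_{\alpha }(\mathbf{a},\mathbf{a}^{\prime })+d_{\alpha }(\mathbf{b},\mathbf{b}^{\prime })\right) $ is a convexity (Busemann nonpositive-curvature) condition on the metric, not a consequence of being geodesic: already on the circle of circumference $2\pi $ with the arclength metric, the midpoint of $0$ and $\pi -\epsilon $ and the midpoint of $0$ and $\pi +\epsilon $ are nearly antipodal although the second endpoints are only $2\epsilon $ apart. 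In $(\mathcal{A}(X),d_{\alpha })$ optimal transport paths are typically non-unique and their branching topology can change discontinuously, so there is no reason any selection of midpoints $\mathbf{m}_{n}$ of $(\mathbf{a}_{n},\mathbf{b}_{n})$ should be Cauchy. What your argument does correctly establish is that every pair in $\mathcal{P}_{\alpha }(X)$ admits $\epsilon $-approximate midpoints, i.e.\ that $\mathcal{P}_{\alpha }(X)$ is a complete length space; passing from approximate midpoints to genuine ones requires an additional ingredient (local compactness and Hopf--Rinow, or a compactness argument on the minimizing paths themselves). To be fair, the paper's one-line proof asserts the fact ``completion of a geodesic space is geodesic'' without supplying that ingredient either, but your proposal cannot be accepted as written because the step on which everything hinges is justified by a false general principle.
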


In other words, for any two probability measures $\mu ^{+},\mu ^{-}\in
\mathcal{P}_{\alpha }(X)$ with $0\leq \alpha <1$, there exists an optimal
transport path (i.e. a geodesic) from $\mu ^{+}$ to $\mu ^{-}$. In
particular, since atomic measures are contained in $\mathcal{P}_{\alpha }(X)$%
, there exists an $\alpha -$optimal transport path from any probability
measure $\mu \in \mathcal{P}_{\alpha }(X)$ to $\delta _{p}$ for any $p\in X$.

A positive Borel measure $\mu $ on $X$ is said to be \textit{concentrated}
on a Borel set $A$ if $\mu (X\setminus A)=0$. The following proposition says
that if $\alpha $ is nonpositive, then any element of $\mathcal{P}_{\alpha
}(X)$ must be bounded.

\begin{proposition}
Suppose $\alpha \leq 0$. If $\mu \in \mathcal{P}_{\alpha }(X)$, then $\mu $
is concentrated on the closed ball $\bar{B}\left( p,d_{\alpha }\left( \mu
,\delta _{p}\right) \right) $ for any $p\in X$.
\end{proposition}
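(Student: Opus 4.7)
Fix $p \in X$ and set $r_0 := d_\alpha(\mu, \delta_p)$. The plan is to approximate $\mu$ by atomic probability measures whose supports are already forced inside a closed ball around $p$ of radius only slightly larger than $r_0$, and then transfer this concentration to $\mu$ by passing to weak convergence.

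Since $\mu$ lies in the $d_\alpha$-completion of $\mathcal{A}(X)$, I would choose a sequence $\mathbf{a}_n = \sum_i m_{n,i}\delta_{x_{n,i}} \in \mathcal{A}(X)$ with $d_\alpha(\mathbf{a}_n, \mu) \to 0$, so that $d_\alpha(\mathbf{a}_n, \delta_p) \to r_0$ by the triangle inequality in the completion. Each $\mathbf{a}_n$ has total mass one, so the hypothesis $\alpha \leq 0$ lets me apply the lower bound (\ref{lower_bound}) of Corollary \ref{negative_estimate}:
\[
\max_i d(p, x_{n,i}) \;\leq\; d_\alpha(\mathbf{a}_n, \delta_p).
\]
Consequently, for every $\epsilon > 0$, once $n$ is large enough that $d_\alpha(\mathbf{a}_n, \delta_p) < r_0 + \epsilon$, the measure $\mathbf{a}_n$ is concentrated on $\bar{B}(p, r_0 + \epsilon)$.

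To pass to the limit I would use the monotonicity recalled in the excerpt: since $\alpha \leq 0 < 1$, one has $d_\alpha \geq d_1$, where $d_1$ is the classical Monge distance on $\mathcal{A}(X)$ and $\mathcal{P}_1(X) = \mathcal{P}(X)$ is identified with the space of all probability measures on $X$. Thus $\mathbf{a}_n \to \mu$ in $d_1$ as well. Because the $\mathbf{a}_n$ are ultimately supported in the common bounded set $\bar{B}(p, r_0 + 1)$, this $d_1$-convergence yields ordinary weak convergence of the $\mathbf{a}_n$ to $\mu$ as Borel probability measures on $X$ with no extra moment hypothesis. The Portmanteau theorem applied to the closed set $\bar{B}(p, r_0 + \epsilon)$ then gives
\[
\mu(\bar{B}(p, r_0 + \epsilon)) \;\geq\; \limsup_{n \to \infty} \mathbf{a}_n(\bar{B}(p, r_0 + \epsilon)) \;=\; 1,
\]
and letting $\epsilon = 1/k \downarrow 0$, continuity of $\mu$ from above along the decreasing family $\bar{B}(p, r_0 + 1/k) \downarrow \bar{B}(p, r_0)$ yields $\mu(\bar{B}(p, r_0)) = 1$, which is the claim.

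The only conceptually non-trivial step is the promotion of $d_\alpha$-convergence to weak convergence; once the comparison $d_\alpha \geq d_1$ and the uniform support bound on the $\mathbf{a}_n$ are in hand, this is standard. The uniform support bound is supplied by the first step, so the argument avoids any tightness or moment verification, and everything else (the triangle inequality in the completion, the lower bound from Corollary \ref{negative_estimate}, and outer continuity of $\mu$ on nested closed balls) is routine.
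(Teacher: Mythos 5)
Your argument is correct and follows essentially the same route as the paper: represent $\mu$ by a $d_{\alpha}$-Cauchy sequence of atomic measures, apply Corollary \ref{negative_estimate} to confine each $\mathbf{a}_{n}$ to the ball $\bar{B}\left(p,d_{\alpha}\left(\mathbf{a}_{n},\delta_{p}\right)\right)$, and pass to the limit. The paper states that limit passage in a single sentence, whereas you correctly supply the missing details (the comparison $d_{\alpha}\geq d_{1}$, weak convergence, Portmanteau on closed balls, and outer continuity), which is exactly what is needed to justify it.
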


\begin{proof}
If $\mu \in \mathcal{P}_{\alpha }(X)$, then $\mu $ is represented by a
Cauchy sequence $\left\{ \mathbf{a}_{n}\right\} \in \mathcal{A}(X)$ with
respect to the metric $d_{\alpha }$. For any $p\in X,$ by corollary \ref%
{negative_estimate}, each $\mathbf{a}_{n}$ is concentrated on the ball $\bar{%
B}\left( p,d_{\alpha }\left( \mathbf{a}_{n},\delta _{p}\right) \right) $.
Thus, $\mu $ is concentrated on the ball $\bar{B}\left( p,d_{\alpha }\left(
\mu ,\delta _{p}\right) \right).$
\end{proof}

When $0<\alpha <1$, $\mu \in \mathcal{P}_{\alpha }(X)$ does not necessarily
imply $\mu $ is concentrated on a bounded set. For instance, let $X=\mathbb{R%
}$, and $\mu =\sum_{n=1}^{\infty }\frac{1}{2^{n}}\delta _{\left\{ n\right\}
} $, which is clearly unbounded. Then, $d_{\alpha }\left( \mu ,\delta
_{\left\{ 0\right\} }\right) =\sum_{n=1}^{\infty }\left( \sum_{k=n}^{\infty }%
\frac{1}{2^{k}}\right) ^{\alpha }\cdot 1=\sum_{n=1}^{\infty }\left( \frac{1}{%
2^{n-1}}\right) ^{\alpha }<\infty $, and $\mu \in \mathcal{P}_{\alpha }(X)$.
Nevertheless, the following proposition says that the mass of every measure
in $\mathcal{P}_{\alpha }(X)$ outside a ball decays to $0$ as the radius of
the ball increases.

\begin{proposition}
\label{general_positive_estimate}Suppose $0<\alpha <1$ and $\tilde{\mu}%
=\lambda \mu $ for some $\mu \in \mathcal{P}_{\alpha }\left( X\right) $ and $%
\lambda >0$. Then, for any point $p\in X$ and $r>0$, we have
\begin{equation*}
\left[ \tilde{\mu}(X\setminus \bar{B}(p,r))\right] ^{\alpha }\leq \frac{%
d_{\alpha }(\tilde{\mu},\lambda \delta _{p})}{r}.
\end{equation*}%
In particular, if $r\geq d_{\alpha }(\tilde{\mu},\lambda \delta
_{p})^{1-\alpha }$, we have
\begin{equation}
\tilde{\mu}(X\setminus \bar{B}(p,r))\leq d_{\alpha }(\tilde{\mu},\lambda
\delta _{p}).  \label{mu_d_alpha}
\end{equation}
\end{proposition}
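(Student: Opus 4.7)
The plan is to upgrade Corollary~\ref{positive_estimate} from atomic to general measures in $\mathcal P_\alpha(X)$ by density, after first eliminating the scaling factor $\lambda$.

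First I would reduce to the case $\lambda=1$. By the scaling identity in the remark following Definition~\ref{d_alpha}, $d_\alpha(\tilde\mu,\lambda\delta_p)=\lambda^\alpha d_\alpha(\mu,\delta_p)$, while $\tilde\mu(X\setminus\bar B(p,r))=\lambda\mu(X\setminus\bar B(p,r))$. Raising the latter to the power $\alpha$ and substituting, the inequality for $\tilde\mu$ is equivalent to the same inequality for the probability measure $\mu$, so I work with $\mu\in\mathcal P_\alpha(X)$ of unit mass.

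By definition of $\mathcal P_\alpha(X)$ as the $d_\alpha$-completion of $\mathcal A(X)$, pick atomic probability measures $\mathbf a_n=\sum_i m_i^{(n)}\delta_{x_i^{(n)}}$ with $d_\alpha(\mathbf a_n,\mu)\to 0$. For each $n$, Corollary~\ref{positive_estimate} applied to $\mathbf a_n$ reads
\[
\bigl[\mathbf a_n(X\setminus\bar B(p,r))\bigr]^\alpha=\Bigl[\sum_{d(p,x_i^{(n)})>r} m_i^{(n)}\Bigr]^\alpha\le\frac{d_\alpha(\mathbf a_n,\delta_p)}{r},
\]
whose right-hand side converges to $d_\alpha(\mu,\delta_p)/r$ by the triangle inequality for $d_\alpha$. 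The heart of the argument is therefore to justify the lower-semicontinuity inequality
\[
\mu(X\setminus\bar B(p,r))\le\liminf_{n\to\infty}\mathbf a_n(X\setminus\bar B(p,r)),
\]
which is exactly the portmanteau property on the open set $X\setminus\bar B(p,r)$.

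To get weak convergence of $\mathbf a_n$ to $\mu$ I would invoke three facts: (i) the monotonicity $d_\alpha\ge d_1$ from the remark following the definition of $\mathcal P_\alpha(X)$; (ii) that $d_1$ on $\mathcal A(X)$ coincides with the classical Monge--Kantorovich distance $W_1$, since $\mathbf M_1(G)=\sum_e w(e)\,\mathrm{length}(e)$; and (iii) that $W_1$-convergence of probability measures implies weak convergence on a Polish space. Together these give $\mathbf a_n\rightharpoonup\mu$ weakly, and the portmanteau inequality on the open set $X\setminus\bar B(p,r)$ yields the desired bound. This compatibility between the abstract $d_\alpha$-completion and weak convergence of measures is the main technical point I expect to have to check carefully.

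Once the main estimate is established, the ``in particular'' assertion is a short algebraic consequence: from $\tilde\mu(X\setminus\bar B(p,r))\le\bigl[d_\alpha(\tilde\mu,\lambda\delta_p)/r\bigr]^{1/\alpha}$, the bound $\bigl[d_\alpha/r\bigr]^{1/\alpha}\le d_\alpha$ is equivalent to $r\ge d_\alpha(\tilde\mu,\lambda\delta_p)^{1-\alpha}$, the degenerate case $d_\alpha(\tilde\mu,\lambda\delta_p)=0$ being trivial since then $\tilde\mu=\lambda\delta_p$ and both sides of~\eqref{mu_d_alpha} vanish.
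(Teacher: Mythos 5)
Your proposal is correct and rests on the same two ingredients as the paper's proof: Corollary \ref{positive_estimate} for the case $\lambda=1$, and the scaling identity $d_{\alpha}(\lambda\mu,\lambda\delta_{p})=\lambda^{\alpha}d_{\alpha}(\mu,\delta_{p})$ to remove the normalization. The difference is one of completeness rather than of route. The paper's proof simply writes ``by Corollary \ref{positive_estimate}, $[\mu(X\setminus\bar B(p,r))]^{\alpha}\leq d_{\alpha}(\mu,\delta_{p})/r$'' for a general $\mu\in\mathcal{P}_{\alpha}(X)$, even though that corollary is stated and proved only for atomic measures $\mathbf{a}\in\mathcal{A}(X)$; the passage from $\mathcal{A}(X)$ to its $d_{\alpha}$-completion is left implicit. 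You make exactly this step explicit: approximate $\mu$ by atomic $\mathbf{a}_{n}$ in $d_{\alpha}$, use $d_{\alpha}\geq d_{1}$ and the identification of $d_{1}$ with the Monge--Kantorovich distance to get weak convergence $\mathbf{a}_{n}\rightharpoonup\mu$, and then apply the portmanteau lower-semicontinuity on the open set $X\setminus\bar B(p,r)$ together with the monotonicity and continuity of $t\mapsto t^{\alpha}$. This is precisely the compatibility between the abstract completion and the measure-theoretic interpretation of its elements that the paper relies on when it says each element of $\mathcal{P}_{\alpha}(X)$ ``can be viewed as a probability measure,'' so your extra work is a legitimate (and welcome) filling-in of a gap rather than a detour; the only cosmetic caveat is that $X$ need not be separable, but the portmanteau inequality you need follows directly from convergence of integrals of bounded Lipschitz functions under $W_{1}$, so nothing breaks. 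Your handling of the ``in particular'' clause, including the degenerate case $d_{\alpha}(\tilde{\mu},\lambda\delta_{p})=0$, is also correct.
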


\begin{proof}
By corollary \ref{positive_estimate},%
\begin{equation*}
\left[ \mu (X\setminus \bar{B}(p,r))\right] ^{\alpha }\leq \frac{d_{\alpha
}(\mu ,\delta _{p})}{r}.
\end{equation*}%
Now, for any $\lambda >0$,%
\begin{equation*}
\left[ \tilde{\mu}(X\setminus \bar{B}(p,r))\right] ^{\alpha }=\left[ \lambda
\mu (X\setminus \bar{B}(p,r))\right] ^{\alpha }\leq \lambda ^{\alpha }\frac{%
d_{\alpha }(\mu ,\delta _{p})}{r}=\frac{d_{\alpha }(\tilde{\mu},\lambda
\delta _{p})}{r}.
\end{equation*}
\end{proof}

\section{Transport Dimension of measures on a metric space}

Now, a natural question is to describe properties of measures that lie in
the space $\mathcal{P}_{\alpha }(X)$. The answer to this question crucially
related to the dimensional information of the measure $\mu $. So, we study
the dimension of measures in this section. To do it, we first study
properties of measures belong to a special subset of $\mathcal{P}_{\alpha
}\left( X\right) $, and then using it to define the transport dimension of
measures. The work in this section generalize results of \cite{xia6} from
Euclidean spaces to a complete geodesic metric space, while the study of %
\cite{xia6} is motivated by the work of \cite{Solimini}.

\subsection{$d_{\protect\alpha }-$\textbf{admissible Cauchy sequence}}

\begin{definition}
\label{admissible}Suppose $X$ is a complete geodesic metric space. Let $\{%
\mathbf{a}_{k}\}_{k=1}^{\infty }$be a sequence of atomic measures on $X$ of
equal total mass in the form of
\begin{equation*}
\mathbf{a}_{k}=\sum_{i=1}^{N_{k}}m_{i}^{\left( k\right) }\delta
_{x_{i}^{\left( k\right) }}
\end{equation*}%
for each $k$, and $\alpha <1$. We say that this sequence is a $d_{\alpha }-$%
\textbf{admissible Cauchy sequence} if for any $\epsilon >0$, there exists
an $N$ such that for all $n>k\geq N$ there exists a partition of
\begin{equation*}
\mathbf{a}_{n}=\sum_{i=1}^{N_{k}}\mathbf{a}_{n,i}^{(k)}
\end{equation*}%
with respect to $\mathbf{a}_{k}$ as sums of disjoint atomic measures and a
path (see figure \ref{admissible_path})
\begin{equation*}
G_{n,i}^{k}\in Path(m_{i}^{(k)}\delta _{x_{i}^{(k)}},\mathbf{a}_{n,i}^{(k)})
\end{equation*}%
for each $i=1,2,\cdots ,N_{k}$ such that
\begin{equation*}
\sum_{i=1}^{N_{k}}\mathbf{M}_{\alpha }\left( G_{n,i}^{k}\right) \leq
\epsilon \text{.}
\end{equation*}%
Also, we denote $G_{n}^{k}=\sum_{i=1}^{N_{k}}G_{n,i}^{k}$, which is a path
from $\mathbf{a}_{k}$ to $\mathbf{a}_{n}$ with $\mathbf{M}_{\alpha
}(G_{n}^{k})\leq \epsilon $. Each $d_{\alpha }-$\textbf{admissible Cauchy
sequence }corresponds to an element in $\mathcal{P}_{\alpha }(X)$. Let
\begin{equation*}
\mathcal{D}_{\alpha }(X)\subseteq \mathcal{P}_{\alpha }(X)
\end{equation*}%
be the set of all probability measures $\mu $ which corresponds to a $%
d_{\alpha }$ admissible Cauchy sequence of probability measures. For
simplicity, we may write $\mathcal{D}_{\alpha }(X)$ as $\mathcal{D}_{\alpha
} $.
\end{definition}

\begin{figure}[h!]
\caption{An example of a transport path between $a_k$ and $a_n$.}
\label{admissible_path}\centering
\includegraphics[height=1.8in]{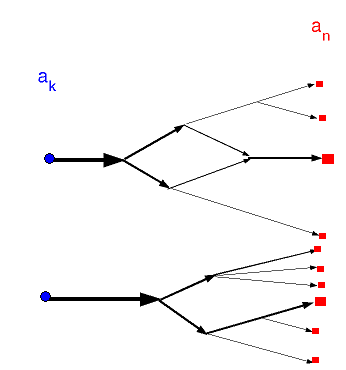}
\end{figure}

It is easy to see that if for each $k$, there is a partition of $\mathbf{a}%
_{k+1}=\sum_{i=1}^{N_{k}}\mathbf{a}_{k+1,i}^{(k)}$ with respect to $\mathbf{a%
}_{k}$ as sums of disjoint atomic measures and a path $G_{k+1,i}^{k}\in
Path\left( m_{i}^{(k)}\delta _{x_{i}^{(k)}},\mathbf{a}_{k+1,i}^{(k)}\right) $
for each $i=1,2,\cdots ,N_{k}$ such that
\begin{equation*}
\sum_{k=1}^{\infty }\left( \sum_{i=1}^{N_{k}}\mathbf{M}_{\alpha }\left(
G_{k+1,i}^{k}\right) \right) <+\infty ,
\end{equation*}%
then $\left\{ \mathbf{a}_{n}\right\} $ is a $d_{\alpha }$-admissible Cauchy
sequence.

Also, note that if $\mu ,\nu \in \mathcal{D}_{\alpha }(X)$, one
automatically has $d_{\alpha }\left( \mu ,\nu \right) <\infty $.

\subsection{Relation with Hausdorff dimension of measures}

Let $\mathcal{H}^{s}$ denote $s$ dimensional Hausdorff measure on $X$ for
each $s\geq 0$. By means of corollary \ref{negative_estimate} and (\ref%
{mu_d_alpha}), the proof of \cite[theorem 3.2.1]{xia6} is also valid for the
following theorem:

\begin{theorem}
\label{1.2} Suppose $X$ is a complete geodesic metric space. If $\mu \in
\mathcal{D}_{\alpha }\left( X\right) $ for some $\alpha \in \left( -\infty
,1\right) $, then $\mu $ is concentrated on a subset $A$ of $X$ with $%
\mathcal{H}^{\frac{1}{1-\alpha }}\left( A\right) =0$.
\end{theorem}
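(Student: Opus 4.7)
The plan is to adapt the proof of \cite[Theorem 3.2.1]{xia6}, substituting Corollary \ref{negative_estimate} and Proposition \ref{general_positive_estimate} for their Euclidean analogues. Set $s:=\frac{1}{1-\alpha}$, so the identity $s(1-\alpha)=1$ (equivalently $\alpha s=s-1$) holds. Let $\{\mathbf{a}_k\}$ be the $d_\alpha$-admissible Cauchy sequence defining $\mu$. For each $m\in\mathbb{N}$, admissibility with $\epsilon=2^{-m}$ yields a threshold $N_m$; choose $k_m:=\max(N_m,m)$, so $k_m\to\infty$. For every $n>k_m$ there is then a partition $\mathbf{a}_n=\sum_{i=1}^{N_{k_m}}\mathbf{a}_{n,i}^{(k_m)}$ and paths $G_{n,i}^{k_m}\in Path(m_i^{(k_m)}\delta_{x_i^{(k_m)}},\mathbf{a}_{n,i}^{(k_m)})$ with $\sum_i\epsilon_i^{(n,m)}\le 2^{-m}$, where $\epsilon_i^{(n,m)}:=\mathbf{M}_\alpha(G_{n,i}^{k_m})\ge d_\alpha(\mathbf{a}_{n,i}^{(k_m)},m_i^{(k_m)}\delta_{x_i^{(k_m)}})$.

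The key localization step places each $\mathbf{a}_{n,i}^{(k_m)}$ in a ball of radius $r_i^{(n,m)}$ around $x_i^{(k_m)}$. If $\alpha\le 0$, Corollary \ref{negative_estimate} forces $\mathrm{supp}(\mathbf{a}_{n,i}^{(k_m)})\subset\bar{B}(x_i^{(k_m)},r_i^{(n,m)})$ with $r_i^{(n,m)}:=\epsilon_i^{(n,m)}/(m_i^{(k_m)})^\alpha$, and H\"older's inequality with conjugate exponents $1/s$ and $1/(1-s)$ (valid since $0<s\le 1$) yields
\[
\sum_i(r_i^{(n,m)})^s=\sum_i(\epsilon_i^{(n,m)})^s(m_i^{(k_m)})^{1-s}\le\Bigl(\sum_i\epsilon_i^{(n,m)}\Bigr)^s\Bigl(\sum_i m_i^{(k_m)}\Bigr)^{1-s}\le 2^{-ms}.
\]
If $0<\alpha<1$, Proposition \ref{general_positive_estimate} applied with $\lambda=m_i^{(k_m)}$ shows the choice $r_i^{(n,m)}:=(\epsilon_i^{(n,m)})^{1-\alpha}$ leaves mass at most $\epsilon_i^{(n,m)}$ of $\mathbf{a}_{n,i}^{(k_m)}$ outside $\bar{B}(x_i^{(k_m)},r_i^{(n,m)})$, and $\sum_i(r_i^{(n,m)})^s=\sum_i\epsilon_i^{(n,m)}\le 2^{-m}$ by $s(1-\alpha)=1$. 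In either regime, $\max_i r_i^{(n,m)}\to 0$ uniformly as $m\to\infty$.

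To transfer these bounds from $\mathbf{a}_n$ to $\mu$, I use that $d_\alpha\ge d_1$ forces $\mathbf{a}_k\to\mu$ weakly. For each fixed $m$, the radii $\{r_i^{(n,m)}\}_n$ are uniformly bounded, so Bolzano--Weierstrass produces a subsequence $n_j\to\infty$ with $r_i^{(n_j,m)}\to r_i^{(m)}$ for every $i$; by Fatou, $\sum_i(r_i^{(m)})^s$ remains bounded by the same quantity. Applying the portmanteau inequality on the closed enlargements $V_m^\eta:=\bigcup_i\bar{B}(x_i^{(k_m)},r_i^{(m)}+\eta)$ and letting $\eta\downarrow 0$ (which nests them down to $V_m:=\bigcup_i\bar{B}(x_i^{(k_m)},r_i^{(m)})$) gives $\mu(V_m)\ge 1-2^{-m}$. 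Setting $A:=\limsup_m V_m=\bigcap_{j\ge 1}\bigcup_{m\ge j}V_m$, reverse Fatou yields $\mu(A)\ge\limsup_m\mu(V_m)=1$; for any $\delta>0$, choosing $j$ with $2\max_i r_i^{(m)}<\delta$ for all $m\ge j$ gives $\mathcal{H}^s_\delta(A)\le\sum_{m\ge j}2^{-m}\to 0$, forcing $\mathcal{H}^s(A)=0$.

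The main obstacle is the case split at $\alpha=0$: admissibility literally confines supports into balls only when $\alpha\le 0$, whereas for $\alpha>0$ it merely controls the mass outside balls, so the second regime needs the extra mass-defect bookkeeping that produces the factor $1-2^{-m}$ in $\mu(V_m)$. A subsidiary difficulty is transferring cover estimates from atomic approximations to $\mu$ uniformly over shrinking scales $\delta_m\to 0$, resolved by the Bolzano--Weierstrass plus limsup-of-covers construction above. The identity $s(1-\alpha)=1$ is precisely what makes the $s$-Hausdorff premeasure of each $V_m$ collapse to a constant multiple of $\sum_i\epsilon_i^{(n,m)}$; any other exponent would break the balance between ball radii and covering cost.
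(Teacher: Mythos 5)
Your proof is correct and follows essentially the same route the paper intends: it reconstructs the argument of \cite[Theorem 3.2.1]{xia6} that the paper cites, using Corollary \ref{negative_estimate} to confine the pieces $\mathbf{a}_{n,i}^{(k)}$ into balls when $\alpha\le 0$ and the estimate (\ref{mu_d_alpha}) to control the escaping mass when $0<\alpha<1$, exactly the two substitutions the paper names. The exponent bookkeeping via $s(1-\alpha)=1$, the H\"older step, and the covering/limsup argument all check out (up to the harmless factor $2^{s}$ from passing from radii to diameters).
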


\begin{definition}
For any probability measure $\mu $ on a complete geodesic metric space $X$,
the Hausdorff dimension of $\mu $ is defined to be
\begin{equation*}
\dim _{H}\left( \mu \right) =\inf \left\{ \dim _{H}\left( A\right) :\mu
\left( X\backslash A\right) =0\right\} ,
\end{equation*}%
where $\dim _{H}(A)$ is the Hausdorff dimension of a set $A$.
\end{definition}

Thus, by Theorem \ref{1.2}, we have

\begin{corollary}
Suppose $X$ is a complete geodesic metric space. For any $\alpha <1$ and any
$\mu \in \mathcal{D}_{\alpha }(X)$, we have
\begin{equation*}
\dim _{H}(\mu )\leq \frac{1}{1-\alpha }.
\end{equation*}
\end{corollary}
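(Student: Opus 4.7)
The plan is to read this statement as an essentially immediate consequence of Theorem \ref{1.2}, so the proposal is short and mostly a matter of unwinding definitions. Given $\mu \in \mathcal{D}_\alpha(X)$ with $\alpha<1$, Theorem \ref{1.2} produces a Borel set $A \subset X$ on which $\mu$ is concentrated, i.e.\ $\mu(X \setminus A)=0$, and such that $\mathcal{H}^{1/(1-\alpha)}(A)=0$.

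Next I would invoke the standard fact that if $\mathcal{H}^{s}(A)=0$ for some $s\geq 0$, then $\dim_H(A)\leq s$. Applied with $s = 1/(1-\alpha)$, this yields $\dim_H(A) \leq \frac{1}{1-\alpha}$. Since $A$ is an admissible competitor in the infimum defining $\dim_H(\mu)$, namely
\[
\dim_H(\mu) = \inf\{\dim_H(A') : \mu(X\setminus A')=0\},
\]
we conclude $\dim_H(\mu)\leq \dim_H(A)\leq \frac{1}{1-\alpha}$, which is the desired bound.

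There is essentially no obstacle here, since both the existence of the concentrating set $A$ and the vanishing of its Hausdorff measure of exponent $1/(1-\alpha)$ are handed to us by Theorem \ref{1.2}, and the passage from $\mathcal{H}^{s}(A)=0$ to $\dim_H(A)\leq s$ is the defining property of Hausdorff dimension. The only point worth a line of verification is that the argument is valid uniformly across the whole range $\alpha < 1$ (including $\alpha \leq 0$), but this is already built into the statement of Theorem \ref{1.2}, so no case split is required.
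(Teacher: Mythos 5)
Your argument is correct and is exactly the paper's (the paper states the corollary as an immediate consequence of Theorem \ref{1.2} together with the definition of $\dim_H(\mu)$ as an infimum over concentration sets). The only step you add explicitly, that $\mathcal{H}^{1/(1-\alpha)}(A)=0$ forces $\dim_H(A)\leq \frac{1}{1-\alpha}$, is the standard characterization of Hausdorff dimension and needs no further justification.
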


\subsection{Minkowski dimension of measures}

A \textit{nested collection}
\begin{equation}
\mathcal{F}=\left\{ Q_{i}^{n}:i=1,2,\cdots ,N_{n}\text{ and }n=1,2,\cdots
\right\}  \label{nested_collection}
\end{equation}%
\textit{of cubes} in $X$ is a collection of Borel subsets of $X$ with the
following properties:

\begin{enumerate}
\item for each $Q_{i}^{n}$, its diameter
\begin{equation}
C_{1}\sigma ^{n}\leq diam\left( Q_{i}^{n}\right) \leq C_{2}\sigma ^{n}
\label{condition_1}
\end{equation}%
for some constants $C_{2}\geq C_{1}>0$ and some $\sigma \in \left(
0,1\right) $.

\item for any $k,l,i,j$ with $l\geq k$, either $Q_{i}^{k}\cap
Q_{j}^{l}=\emptyset $ or $Q_{i}^{k}\subseteq Q_{j}^{l};$

\item for each $Q_{j}^{n+1}$ there exists exactly one $Q_{i}^{n}$ (parent of
$Q_{j}^{n+1}$) such that $Q_{j}^{n+1}\subseteq Q_{i}^{n}$;

\item for each $Q_{i}^{n}$ there exists at least one $Q_{j}^{n+1}$ (child of
$Q_{i}^{n}$) such that $Q_{j}^{n+1}\subseteq Q_{i}^{n}$;
\end{enumerate}

Each $Q_{i}^{n}$ is called a cube of generation $n$ in $\mathcal{F}$. If two
different cubes $Q_{i}^{n}$ and $Q_{j}^{n}$ of generation $n$ have the same
parent, then they are called \textit{brothers} to each other.

In next section, we will see that for each bounded subset of a complete
geodesic doubling metric space, there always exists a nested collection of
cubes which covers the set.

\begin{definition}
For any nested collection $\mathcal{F}$, we define its Minkowski dimension
\begin{equation}
\dim _{M}\left( \mathcal{F}\right) :=\lim_{n\rightarrow \infty }\frac{\log
\left( N_{n}\right) }{\log \left( \frac{1}{\sigma ^{n}}\right) }
\label{cubical_dim}
\end{equation}%
provided the limit exists, where $N_{n}$ is the total number of cubes of
generation $n$.
\end{definition}

\begin{definition}
A Radon measure $\mu $ on $X$ is said to be \textit{concentrated} on a
nested collection $\mathcal{F}$ in $X$ if for each $n$,
\begin{equation*}
\mu \left( X\setminus \left( \bigcup_{i=1}^{N_{n}}Q_{i}^{n}\right) \right)
=0.
\end{equation*}
\end{definition}

\begin{definition}
For any Radon measure $\mu $, we define the Minkowski dimension of the
measure $\mu $ to be
\begin{equation*}
\dim _{M}\left( \mu \right) :=\inf \left\{ \dim _{M}\left( \mathcal{F}%
\right) \right\}
\end{equation*}%
where the infimum is over all nested collection $\mathcal{F}$ that $\mu $ is
concentrated on.
\end{definition}

The proof of \cite[theorem 3.3.5]{xia6} is still valid for the following
theorem:

\begin{theorem}
\label{positive_minkowski} Suppose $\mu $ is a probability measure on a
complete geodesic metric space $\left( X,d\right) $. If $\dim _{M}(\mu )<%
\frac{1}{1-\alpha }$ for some $0\leq \alpha <1$, then $\mu \in \mathcal{D}%
_{\alpha }\left( X\right) $.
\end{theorem}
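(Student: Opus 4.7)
\medskip

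The plan is to exploit the hypothesis directly by constructing an explicit $d_{\alpha}$-admissible Cauchy sequence for $\mu$ using a nested collection $\mathcal{F}=\{Q_i^n\}$ on which $\mu$ is concentrated and whose Minkowski dimension satisfies $s:=\dim_M(\mathcal{F})<\tfrac{1}{1-\alpha}$. Pick any $s'\in(s,\tfrac{1}{1-\alpha})$; by definition of $\dim_M(\mathcal{F})$, the number $N_n$ of generation-$n$ cubes satisfies $N_n\le C\sigma^{-ns'}$ for all sufficiently large $n$.

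For each $n$, choose a representative point $x_i^n\in Q_i^n$ and define the atomic probability measure
\begin{equation*}
\mathbf{a}_n:=\sum_{i=1}^{N_n} m_i^{(n)}\,\delta_{x_i^n},\qquad m_i^{(n)}:=\mu(Q_i^n),
\end{equation*}
which is indeed a probability measure since $\mu$ is concentrated on $\bigcup_i Q_i^n$. The nested structure of $\mathcal{F}$ gives a canonical partition: for each parent $Q_i^k$, let $\mathbf{a}_{k+1,i}^{(k)}$ be the sum of those atoms $m_j^{(k+1)}\delta_{x_j^{k+1}}$ of $\mathbf{a}_{k+1}$ with $Q_j^{k+1}\subseteq Q_i^k$. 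Because $X$ is geodesic, I can connect $x_i^k$ by geodesic segments to each such $x_j^{k+1}$, weighted by $m_j^{(k+1)}$, forming a star-shaped transport path $G_{k+1,i}^{k}\in\mathrm{Path}(m_i^{(k)}\delta_{x_i^k},\mathbf{a}_{k+1,i}^{(k)})$; the balance equation holds because $\mu(Q_i^k)=\sum_{j:\,Q_j^{k+1}\subseteq Q_i^k}\mu(Q_j^{k+1})$, and each segment has length at most $\mathrm{diam}(Q_i^k)\le C_2\sigma^k$.

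The cost estimate is then straightforward. Summing over all parents $i$ at generation $k$ and all their children $j$, and noting that the union over $i$ of the children indexes all cubes of generation $k+1$,
\begin{equation*}
\sum_{i=1}^{N_k}\mathbf{M}_{\alpha}\!\left(G_{k+1,i}^{k}\right)\le C_2\sigma^k\sum_{j=1}^{N_{k+1}}\bigl(m_j^{(k+1)}\bigr)^{\alpha}\le C_2\sigma^k\,N_{k+1}^{1-\alpha},
\end{equation*}
where the last inequality is Jensen applied to the concave function $t\mapsto t^{\alpha}$ together with $\sum_j m_j^{(k+1)}=1$. Using $N_{k+1}\le C\sigma^{-(k+1)s'}$ for $k$ large, the total cost is controlled by
\begin{equation*}
\sum_{k}\sum_{i=1}^{N_k}\mathbf{M}_{\alpha}\!\left(G_{k+1,i}^{k}\right)\le C'\sum_{k}\sigma^{k\bigl(1-s'(1-\alpha)\bigr)},
\end{equation*}
which converges because $s'(1-\alpha)<1$. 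By the sufficient condition stated right after Definition~\ref{admissible}, $\{\mathbf{a}_n\}$ is a $d_{\alpha}$-admissible Cauchy sequence.

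It remains to identify the limit in $\mathcal{P}_{\alpha}(X)$ with $\mu$. Since $w\le 1$ and $0\le\alpha<1$ imply $w^{\alpha}\ge w$, we have $\mathbf{M}_{\alpha}\ge \mathbf{M}_1$ and hence $d_{\alpha}\ge d_1$, where $d_1$ is Monge's distance; thus $\{\mathbf{a}_n\}$ is also $d_1$-Cauchy and converges weakly. On the other hand, $\mathbf{a}_n$ and $\mu$ assign the same mass to each generation-$n$ cube, and $\mathrm{diam}(Q_i^n)\to 0$, so $\mathbf{a}_n\to\mu$ weakly by a standard partition-of-unity argument. This identifies the limit as $\mu$, proving $\mu\in\mathcal{D}_{\alpha}(X)$. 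The main obstacle is the cost estimate at each scale: ensuring the exponent $1-s'(1-\alpha)$ is strictly positive, which is precisely what the dimensional hypothesis guarantees and what forces the choice $s'<\tfrac{1}{1-\alpha}$.
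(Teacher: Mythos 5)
Your proof is correct and follows essentially the same route as the argument the paper invokes (from the cited reference for Theorem~\ref{positive_minkowski}): atomic approximations carried by representative points of the cubes of a nested collection of near-optimal Minkowski dimension, star-shaped transport paths between consecutive generations, the concavity bound $\sum_j (m_j^{(k+1)})^{\alpha}\le N_{k+1}^{1-\alpha}$ (which is exactly where $\alpha\ge 0$ enters), and summability of the geometric series $\sum_k\sigma^{k(1-s'(1-\alpha))}$. The identification of the limit with $\mu$ via $d_{\alpha}\ge d_{1}$ is also handled appropriately, so there is nothing to add.
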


\subsection{Evenly concentrated measures}

Now, we aim at achieving a similar result as in theorem \ref%
{positive_minkowski} for the case $\alpha <0$. To do it, we introduce the
following definition:

\begin{definition}
Let $\left( X,d\right) $ be a complete geodesic metric space. A Radon
measure $\mu $ on $X$ is \textit{evenly concentrated} on a nested collection
$\mathcal{F}$ in $X$ if for each cube $Q_{i}^{n}$ of generation $n$ in $%
\mathcal{F}$, either $Q_{i}^{n}$ has no brothers or $\mu \left(
Q_{i}^{n}\right) \geq \frac{\lambda }{N_{n}}$ for some constant $\lambda >0$.
\end{definition}

Here, $Q_{i}^{n}$ has no brothers means that the parent of $Q_{i}^{n}$ has
only one child, namely $Q_{i}^{n}$ itself.

Some examples of evenly concentrated measures have been given in \cite{xia6}%
. In particular, if $\mu $ is an Ahlfors regular measure concentrated on a
nested collection $\mathcal{F}$ in $X$, then $\mu $ is evenly concentrated
on $\mathcal{F}$.

\begin{definition}
For any Radon measure $\mu $, we define
\begin{equation*}
\dim _{U}\left( \mu \right) :=\inf \left\{ \dim _{M}\left( \mathcal{F}%
\right) \right\}
\end{equation*}%
where the infimum is over all nested collection $\mathcal{F}$ that $\mu $ is
evenly concentrated on.
\end{definition}

Obviously,
\begin{equation*}
\dim _{M}\left( \mu \right) \leq \dim _{U}\left( \mu \right) .
\end{equation*}%
The proof of \cite[theorem 3.4.6]{xia6} is also valid for the following
theorem:

\begin{theorem}
\label{negative}Let $\left( X,d\right) $ be a complete geodesic metric
space. Suppose $\mu $ is a probability measure with $\dim _{U}(\mu )<\frac{1%
}{1-\alpha }$ for some $\alpha <1$, then $\mu \in \mathcal{D}_{\alpha
}\left( X\right) $.
\end{theorem}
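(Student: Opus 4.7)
The plan is to imitate the Euclidean argument of \cite[Theorem 3.4.6]{xia6}: produce a $d_\alpha$-admissible Cauchy sequence of atomic measures converging to $\mu$. Since $\dim_U(\mu)<\frac{1}{1-\alpha}$, I first fix a nested collection $\mathcal{F}=\{Q_i^n\}$ on which $\mu$ is evenly concentrated and whose Minkowski dimension $s$ satisfies $s(1-\alpha)<1$. By definition of $\dim_M(\mathcal{F})$, I may then assume $N_n\leq \sigma^{-sn}$ for all sufficiently large $n$.

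The second step is to build the discretizations. In each cube $Q_i^n$ I select a representative point $x_i^n\in Q_i^n$, but compatibly: whenever $Q_j^{n+1}$ has no brothers, I take $x_j^{n+1}$ to be the representative of its (unique-child) parent. Set $\mathbf{a}_n:=\sum_{i=1}^{N_n}\mu(Q_i^n)\,\delta_{x_i^n}$. Since $\mu$ is Radon and concentrated on $\mathcal{F}$, the sequence $\mathbf{a}_n$ converges weakly to $\mu$, and its total mass is $1$.

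The third step is the cost estimate. I define a transport path $G_{n+1}^n\in \text{Path}(\mathbf{a}_n,\mathbf{a}_{n+1})$ by drawing, for every parent cube $Q_i^n$ and every child $Q_j^{n+1}\subseteq Q_i^n$, a geodesic edge from $x_i^n$ to $x_j^{n+1}$ of weight $\mu(Q_j^{n+1})$; the balance equation at each $x_i^n$ is then the additivity of $\mu$ over the children. Edges corresponding to cubes without brothers have length $0$ by our compatible choice of representatives, so only the set $B_{n+1}$ of cubes of generation $n+1$ that \emph{do} have brothers contributes:
\begin{equation*}
\mathbf{M}_\alpha(G_{n+1}^n)\;\leq\; C_2\sigma^{n}\sum_{Q_j^{n+1}\in B_{n+1}}\mu(Q_j^{n+1})^\alpha.
\end{equation*}
Here I used that both endpoints of each nontrivial edge lie in the parent cube, which has diameter $\leq C_2\sigma^n$.

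The final step uses the two hypotheses together. Even concentration gives $\mu(Q_j^{n+1})\geq \lambda/N_{n+1}$ for $Q_j^{n+1}\in B_{n+1}$. When $\alpha<0$, the decreasing function $t\mapsto t^\alpha$ yields $\mu(Q_j^{n+1})^\alpha\leq \lambda^\alpha N_{n+1}^{-\alpha}$, so
\begin{equation*}
\mathbf{M}_\alpha(G_{n+1}^n)\;\leq\; C_2\lambda^\alpha\, N_{n+1}^{1-\alpha}\,\sigma^{n}\;\leq\; C\,\sigma^{\,n\,(1-s(1-\alpha))},
\end{equation*}
which is summable because $\sigma<1$ and $1-s(1-\alpha)>0$. (For $0\leq \alpha<1$ one may either invoke Theorem \ref{positive_minkowski} directly or repeat the same estimate using the trivial inequality $\mu(Q_j^{n+1})^\alpha\leq 1$, since then only $N_{n+1}\sigma^n$ matters.) Concatenating the paths and applying the criterion noted just after Definition \ref{admissible} identifies $\{\mathbf{a}_n\}$ as a $d_\alpha$-admissible Cauchy sequence whose limit in $\mathcal{P}_\alpha(X)$ is $\mu$, proving $\mu\in\mathcal{D}_\alpha(X)$.

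The main obstacle, as in the Euclidean case, is controlling the factor $\mu(Q_j^{n+1})^\alpha$ when $\alpha<0$: small children mass would blow up this term, and the Minkowski bound on $N_n$ alone cannot save the sum. The whole point of introducing $\dim_U$ and even concentration is to provide the uniform lower bound $\lambda/N_n$ on the mass of every non-singleton cube, which is exactly what converts the combinatorial count $N_n^{1-\alpha}$ into a geometric term that beats $\sigma^n$. Everything else — the geodesic metric structure is used only to guarantee that the edges can be chosen as geodesic segments of length $\leq \text{diam}(Q_i^n)$ — passes verbatim from the Euclidean proof.
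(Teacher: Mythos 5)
Your construction is the one the paper intends (the paper simply imports the proof of \cite[Theorem 3.4.6]{xia6}): discretize $\mu $ along a nested collection $\mathcal{F}$ witnessing $\dim _{U}(\mu )<\frac{1}{1-\alpha }$, join each parent representative to each child representative by a geodesic edge of weight $\mu (Q_{j}^{n+1})$, and use even concentration to control $\mu (Q_{j}^{n+1})^{\alpha }$ when $\alpha <0$. The central estimate $\mathbf{M}_{\alpha }(G_{n+1}^{n})\leq C\,N_{n+1}^{1-\alpha }\sigma ^{n}\lesssim \sigma ^{n(1-s(1-\alpha ))}$ is correct and is exactly where the hypothesis enters. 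Two points, however, need repair.

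First, your parenthetical treatment of $0\leq \alpha <1$ is wrong: the bound $\mu (Q_{j}^{n+1})^{\alpha }\leq 1$ only yields $\mathbf{M}_{\alpha }(G_{n+1}^{n})\leq CN_{n+1}\sigma ^{n}\approx \sigma ^{n(1-s)}$, summable only when $s<1$, whereas the hypothesis allows $s$ up to $\frac{1}{1-\alpha }>1$. Either use concavity, $\sum_{j}\mu (Q_{j}^{n+1})^{\alpha }\leq N_{n+1}^{1-\alpha }\bigl(\sum_{j}\mu (Q_{j}^{n+1})\bigr)^{\alpha }\leq N_{n+1}^{1-\alpha }$, or --- as your primary suggestion correctly does --- dispose of this case at once via $\dim _{M}(\mu )\leq \dim _{U}(\mu )$ and Theorem \ref{positive_minkowski}; the real content of the theorem is $\alpha <0$.

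Second, the ``compatible representatives'' device can break the identification of the limit with $\mu $. If some cube admits an infinite descending chain of brother-less descendants (which happens precisely when $\mu $ has an atom at the intersection point $p$ of that chain), your rule freezes the representative at the top ancestor's point $x_{i}^{n_{0}}$, which need not equal $p$; then every $\mathbf{a}_{n}$ carries the mass $\mu (Q_{i}^{n_{0}})$ at $x_{i}^{n_{0}}$ forever, and the $d_{1}$ (hence weak) limit of $\{\mathbf{a}_{n}\}$ is not $\mu $ --- test this with $\mu =\delta _{p}$ and the collection $Q^{n}=\bar{B}(p,\sigma ^{n})$. Two standard repairs: either anchor the representative of each cube at a point of the deepest cube of its maximal brother-less descending chain (the intersection point when that chain is infinite), or drop the device entirely and note that the mass lower bound propagates up brother-less chains: since $\mu (Q_{j}^{n+1})=\mu (\text{parent})$ whenever $Q_{j}^{n+1}$ has no brothers, and $N_{n}$ is nondecreasing, every positive-mass cube of generation $n$ satisfies $\mu (Q_{i}^{n})\geq \tilde{\lambda}/N_{n}$ for a suitable $\tilde{\lambda}>0$. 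With the latter repair you may take $x_{i}^{n}\in Q_{i}^{n}$ throughout, the same cost estimate applies to every edge, and $d_{1}(\mathbf{a}_{n},\mu )\leq C_{2}\sigma ^{n}\rightarrow 0$ identifies the limit. With either fix the argument is complete.
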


\subsection{Transport dimension of measures}

We now introduce the following concept:

\begin{definition}
Suppose $X$ is a complete geodesic metric space. For any probability measure
$\mu $ on $X$, we define the transport dimension of $\mu $ to be
\begin{equation*}
\dim _{T}\left( \mu \right) :=\inf_{\alpha <1}\left\{ \frac{1}{1-\alpha }%
:\mu \in \mathcal{D}_{\alpha }(X)\right\} .
\end{equation*}
\end{definition}

Note that if $\frac{1}{1-\alpha }>\dim _{T}\left( \mu \right) $, then $\mu
\in \mathcal{D}_{\alpha }(X)$, and thus $d_{\alpha }\left( \mu ,\delta
_{O}\right) <+\infty $ for any fixed point $O\in X$. If in addition $\alpha
\geq 0$, then there exists an $\alpha -$optimal transport path from $\mu $
to $\delta _{O}$.

By theorems \ref{positive_minkowski}, \ref{negative} and \ref{1.2}, we have
(see \cite[theorem 3.5.2]{xia6})

\begin{theorem}
\label{main theorem} Suppose $X$ is a complete geodesic metric space. Let $%
\mu $ be any probability measure on $X$, then
\begin{equation*}
\dim _{H}\left( \mu \right) \leq \dim _{T}\left( \mu \right) \leq \max
\{\dim _{M}(\mu ),1\}\text{.}
\end{equation*}%
Moreover, we also have%
\begin{equation*}
\dim _{H}\left( \mu \right) \leq \dim _{T}(\mu )\leq \dim _{U}(\mu ).
\end{equation*}
\end{theorem}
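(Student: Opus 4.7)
The theorem is a direct synthesis of the three earlier results that relate membership in $\mathcal{D}_\alpha(X)$ to the various dimensions, read off the definition $\dim_T(\mu) = \inf_{\alpha<1}\{1/(1-\alpha) : \mu \in \mathcal{D}_\alpha(X)\}$. My plan is to prove each of the three inequalities separately by a short infimum argument.

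For the lower bound $\dim_H(\mu) \leq \dim_T(\mu)$, I would invoke the corollary to Theorem \ref{1.2} directly: whenever $\mu \in \mathcal{D}_\alpha(X)$ for some $\alpha < 1$, the measure is concentrated on a set of Hausdorff dimension at most $1/(1-\alpha)$, hence $\dim_H(\mu) \leq 1/(1-\alpha)$. Taking the infimum over all admissible $\alpha$ gives the desired bound. This step is essentially mechanical.

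For the upper bound $\dim_T(\mu) \leq \dim_U(\mu)$, I would apply Theorem \ref{negative}: for any $\epsilon > 0$, pick $\alpha < 1$ with $\dim_U(\mu) < 1/(1-\alpha) < \dim_U(\mu) + \epsilon$; then $\mu \in \mathcal{D}_\alpha(X)$, so $\dim_T(\mu) \leq 1/(1-\alpha) < \dim_U(\mu) + \epsilon$, and letting $\epsilon \to 0$ finishes the argument.

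The bound $\dim_T(\mu) \leq \max\{\dim_M(\mu),1\}$ is the part that needs mild care, because Theorem \ref{positive_minkowski} requires $0 \leq \alpha < 1$ (equivalently $1/(1-\alpha) \geq 1$), so we cannot push $1/(1-\alpha)$ below $1$. I would split into two cases. If $\dim_M(\mu) \geq 1$, pick $\alpha \in [0,1)$ with $\dim_M(\mu) < 1/(1-\alpha) < \dim_M(\mu) + \epsilon$; Theorem \ref{positive_minkowski} gives $\mu \in \mathcal{D}_\alpha(X)$, hence $\dim_T(\mu) \leq \dim_M(\mu) + \epsilon$. If instead $\dim_M(\mu) < 1$, pick $\alpha \in [0,1)$ with $1 < 1/(1-\alpha) < 1 + \epsilon$; then $\dim_M(\mu) < 1/(1-\alpha)$, so again $\mu \in \mathcal{D}_\alpha(X)$ and $\dim_T(\mu) \leq 1 + \epsilon$. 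In both cases $\dim_T(\mu) \leq \max\{\dim_M(\mu),1\} + \epsilon$, and sending $\epsilon \to 0$ concludes. This threshold $1$ produced by the restriction $\alpha \geq 0$ is the only subtlety; everything else is an immediate consequence of the previously established theorems.
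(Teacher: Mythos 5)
Your proposal is correct and is exactly the argument the paper intends: the paper proves this theorem by simply citing Theorems \ref{positive_minkowski}, \ref{negative} and \ref{1.2} (deferring details to the earlier work), and your infimum arguments, including the case split forced by the restriction $\alpha\geq 0$ in Theorem \ref{positive_minkowski} which produces the threshold $1$, fill in precisely those details.
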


In \cite[Example 3.5.3]{xia6}, we showed that for the Cantor measure $\mu $,
we have
\begin{equation*}
\dim _{H}\left( \mu \right) =\dim _{T}(\mu )=\dim _{U}(\mu )=\frac{\ln 2}{%
\ln 3}.
\end{equation*}

\subsection{The Dimensional Distance between probability measures}

In this subsetion, we will give a geometric meaning to the transport
dimension of measures.

Let $(X,d)$ be a complete geodesic metric space. For any $\alpha <1$, let
\begin{equation*}
\mathcal{S}_{\alpha }\left( X\right) =\left\{ \Lambda \left( \mu -\nu
\right) :\Lambda \geq 0,\mu ,\nu \in \mathcal{D}_{\alpha }(X)\right\}
\end{equation*}%
be a collection of signed measures. Clearly, $\mathcal{S}_{\alpha
_{1}}\left( X\right) \subseteq \mathcal{S}_{\alpha _{2}}\left( X\right) $ if
$\alpha _{1}\leq \alpha _{2}$.

\begin{definition}
\label{Def_D} Let $(X,d)$ be a complete geodesic metric space. For any two
probability measures $\mu ,\nu $ on $X$, we define%
\begin{equation*}
D(\mu ,\nu ):=\inf_{\alpha <1}\{\frac{1}{1-\alpha }:\mu -\nu \in \mathcal{S}%
_{\alpha }\left( X\right) \}.
\end{equation*}
\end{definition}

The proof of \cite[proposition 4.05]{xia6} still valid for the following
proposition

\begin{proposition}
\label{pseudometric} Let $(X,d)$ be a complete geodesic metric space. Then, $%
D\,$ is a pseudometric\footnote[1]{%
A pseudometric $D$ means that it is nonnegative, symmetric, satisfies the
triangle inequality, and $D\left( \mu ,\mu \right) =0$. But $D\left( \mu
,\nu \right) =0$ does not imply $\mu =\nu $.} on the space of probability
measures on $X$.
\end{proposition}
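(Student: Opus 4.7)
The plan is to verify the four pseudometric axioms in turn. Nonnegativity is immediate from $\frac{1}{1-\alpha}>0$ for all $\alpha<1$. Symmetry follows from the observation that $\mathcal{S}_\alpha(X)$ is closed under negation: swapping the roles of $\mu'$ and $\nu'$ in an expression $\Lambda(\mu'-\nu')$ sends $\sigma \in \mathcal{S}_\alpha$ to $-\sigma$, so $\mu-\nu \in \mathcal{S}_\alpha$ if and only if $\nu-\mu \in \mathcal{S}_\alpha$, giving $D(\mu,\nu)=D(\nu,\mu)$. For the diagonal, every finite atomic probability measure is the limit of a trivial constant admissible Cauchy sequence, so $\mathcal{A}(X)\subseteq \mathcal{D}_\alpha(X)$ and hence $0=\delta_p-\delta_p=\mu-\mu\in \mathcal{S}_\alpha$ for every $\alpha<1$; letting $\alpha\to-\infty$ forces $D(\mu,\mu)=0$.

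The substantive content is the triangle inequality $D(\mu,\eta)\leq D(\mu,\nu)+D(\nu,\eta)$. The plan is to combine two facts: (a) the nested inclusion $\mathcal{S}_{\alpha_1}(X)\subseteq \mathcal{S}_{\alpha_2}(X)$ for $\alpha_1\leq\alpha_2$, already noted right after Definition \ref{Def_D}, and (b) closure of each $\mathcal{S}_\alpha(X)$ under addition of signed measures. Granting both, pick $\alpha_i<1$ with $\mu-\nu\in \mathcal{S}_{\alpha_1}$, $\nu-\eta\in \mathcal{S}_{\alpha_2}$ and $\tfrac{1}{1-\alpha_i}$ close to the respective infima. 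With $\alpha:=\max(\alpha_1,\alpha_2)$, fact (a) places both differences in $\mathcal{S}_\alpha$, fact (b) yields $\mu-\eta=(\mu-\nu)+(\nu-\eta)\in \mathcal{S}_\alpha$, and since $x\mapsto\tfrac{1}{1-x}$ is increasing,
\[
D(\mu,\eta)\leq \frac{1}{1-\alpha}=\max\!\left(\frac{1}{1-\alpha_1},\frac{1}{1-\alpha_2}\right)\leq \frac{1}{1-\alpha_1}+\frac{1}{1-\alpha_2};
\]
letting $\alpha_i$ approach the infima closes the argument.

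The main obstacle is thus fact (b). Given $\sigma_i=\Lambda_i(\mu_i-\nu_i)$ with $\mu_i,\nu_i\in \mathcal{D}_\alpha$, I would set $\Lambda:=\Lambda_1+\Lambda_2$ (assume $\Lambda>0$, else both $\sigma_i$ vanish) and write $\sigma_1+\sigma_2=\Lambda(\mu-\nu)$ with convex combinations $\mu:=(\Lambda_1\mu_1+\Lambda_2\mu_2)/\Lambda$ and $\nu:=(\Lambda_1\nu_1+\Lambda_2\nu_2)/\Lambda$. Both are probability measures, so fact (b) reduces to closure of $\mathcal{D}_\alpha(X)$ under convex combinations. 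For this, given admissible Cauchy sequences $\{\mathbf{a}_k\}$ for $\mu_1$ and $\{\mathbf{b}_k\}$ for $\mu_2$, the candidate sequence for $t\mu_1+(1-t)\mu_2$ (with $t=\Lambda_1/\Lambda$) is $\{t\mathbf{a}_k+(1-t)\mathbf{b}_k\}$. Its admissibility transport paths are obtained by rescaling the original admissibility paths by weights $t$ and $1-t$ and taking their union; using the scaling identity $\mathbf{M}_\alpha(\lambda G)=\lambda^\alpha\mathbf{M}_\alpha(G)$, the combined cost is at most $(t^\alpha+(1-t)^\alpha)$ times the sum of the original costs, and can therefore be driven to zero. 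The delicate bookkeeping step is handling shared support points between $\mathbf{a}_k$ and $\mathbf{b}_k$, where the two partitions of $t\mathbf{a}_n+(1-t)\mathbf{b}_n$ must be glued consistently at each common point of $t\mathbf{a}_k+(1-t)\mathbf{b}_k$; subadditivity $(w_1+w_2)^\alpha\leq w_1^\alpha+w_2^\alpha$ on $(0,\infty)$ (valid for $\alpha\leq 1$) guarantees that merging overlapping edges only decreases the cost bound.
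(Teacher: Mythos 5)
Your proposal is correct and takes essentially the same route as the paper, which does not spell out the argument but defers to the proof of \cite[Proposition 4.05]{xia6}: the only substantive axiom is the triangle inequality, which, as you observe, reduces via the monotonicity $\mathcal{S}_{\alpha_{1}}(X)\subseteq\mathcal{S}_{\alpha_{2}}(X)$ to closure of $\mathcal{S}_{\alpha}(X)$ under addition, i.e.\ to convexity of $\mathcal{D}_{\alpha}(X)$, which is established by rescaling and superposing admissible sequences exactly as you describe. Your handling of the diagonal, of the normalization $\Lambda=\Lambda_{1}+\Lambda_{2}$, and of the cost bound $t^{\alpha}\mathbf{M}_{\alpha}(G)$ together with subadditivity at shared atoms matches the intended argument.
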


In general, $D$ is not necessarily a metric. Indeed, for any two atomic
probability measures $\mathbf{a,b}$, we have $\mathbf{a-b}\in \mathcal{S}%
_{\alpha }\left( X\right) $ for any $\alpha <1$. Thus, $D\left( \mathbf{a,b}%
\right) =0$ while $\mathbf{a}$ and $\mathbf{b}$ are not necessarily the same
measure. Nevertheless, we may easily extend the pseudometric $D$ to a metric
on equivalent classes of measures. To this end we define a notion of the
equivalent class on measures.

\begin{definition}
For any two probability measures $\mu $ and $\nu $ on $X$, we say
\begin{equation*}
\mu \sim \nu \text{\ if }D(\mu ,\nu )=0.
\end{equation*}%
The equivalent class of $\mu $ is denoted by $[\mu ]$.
\end{definition}

For instance, all atomic probability measures are equivalent to each other.

\begin{definition}
For any equivalent class $[\mu]$ and $[\nu]$, define
\begin{equation*}
\mathbf{D}([\mu],[\nu])=D(\mu,\nu).
\end{equation*}
\end{definition}

From this definition and proposition \ref{pseudometric}, clearly, we have
the following theorem.

\begin{theorem}
Let $(X,d)$ be a complete geodesic metric space. Then, $\mathbf{D}$ is a
metric on $\mathcal{P}\left( X\right) /\sim $ .
\end{theorem}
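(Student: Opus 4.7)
The plan is to reduce everything to Proposition \ref{pseudometric}, which already establishes that $D$ is a pseudometric on the space of probability measures. The task on the quotient $\mathcal{P}(X)/\sim$ is thus bookkeeping: confirm well-definedness, then check the three remaining metric axioms, noting that the failure of $D$ to separate points is precisely what passing to $\sim$-equivalence classes repairs.

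First I would verify that $\mathbf{D}$ is well-defined, i.e.\ that the value $D(\mu,\nu)$ depends only on the classes $[\mu]$ and $[\nu]$. Suppose $\mu\sim\mu'$ and $\nu\sim\nu'$, so $D(\mu,\mu')=D(\nu,\nu')=0$. By the triangle inequality for the pseudometric $D$,
\begin{equation*}
D(\mu',\nu')\leq D(\mu',\mu)+D(\mu,\nu)+D(\nu,\nu')=D(\mu,\nu),
\end{equation*}
and the reverse inequality follows by interchanging the roles. Hence $D(\mu,\nu)=D(\mu',\nu')$ and $\mathbf{D}([\mu],[\nu])$ is unambiguous.

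Next I would transfer the pseudometric axioms. Nonnegativity $\mathbf{D}([\mu],[\nu])\geq 0$, symmetry $\mathbf{D}([\mu],[\nu])=\mathbf{D}([\nu],[\mu])$, reflexivity $\mathbf{D}([\mu],[\mu])=0$, and the triangle inequality
\begin{equation*}
\mathbf{D}([\mu],[\nu])\leq \mathbf{D}([\mu],[\eta])+\mathbf{D}([\eta],[\nu])
\end{equation*}
all follow immediately from the corresponding properties of $D$ in Proposition \ref{pseudometric} after unpacking the definition $\mathbf{D}([\mu],[\nu]):=D(\mu,\nu)$.

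The only remaining axiom is the identity of indiscernibles, and here the construction of $\sim$ does all the work. By the definition of the equivalence relation, $\mathbf{D}([\mu],[\nu])=0$ means $D(\mu,\nu)=0$, which is exactly the condition $\mu\sim\nu$, i.e.\ $[\mu]=[\nu]$. Conversely, if $[\mu]=[\nu]$ then $\mu\sim\nu$, so $D(\mu,\nu)=0$ and $\mathbf{D}([\mu],[\nu])=0$. There is no genuine obstacle here; the substance of the theorem was already absorbed into Proposition \ref{pseudometric} (whose proof is imported from \cite{xia6}), and the present statement is the standard quotient construction turning a pseudometric into a metric.
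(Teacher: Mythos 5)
Your proposal is correct and follows the same route the paper intends: the paper states the theorem as an immediate consequence of Proposition \ref{pseudometric} and the definition of $\sim$, and your write-up simply makes explicit the standard well-definedness check and the transfer of the pseudometric axioms to the quotient. Nothing is missing.
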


\begin{definition}
The metric $\mathbf{D}$ is called the dimensional distance on the space $%
\mathcal{P}\left( X\right) /\sim $ \ of equivalent classes of probability
measures on $X$.
\end{definition}

We now give a geometric meaning to transport dimension of measures.

\begin{theorem}
Let $(X,d)$ be a complete geodesic metric space. For any positive
probability measure $\mu $, we have
\begin{equation*}
\dim _{T}(\mu )=\mathbf{D}\left( \left[ \mu \right] ,\left[ \mathbf{a}\right]
\right) =D(\mu ,\mathbf{a})
\end{equation*}%
where $\mathbf{a}$ is any atomic probability measure.
\end{theorem}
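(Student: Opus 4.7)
The plan is to reduce the theorem to showing $\dim_T(\mu)=D(\mu,\mathbf{a})$, since the identity $\mathbf{D}([\mu],[\mathbf{a}])=D(\mu,\mathbf{a})$ is immediate from the definition of $\mathbf{D}$. Both quantities are infima of $\frac{1}{1-\alpha}$ over $\alpha<1$ subject to a condition on $\mu$ (respectively $\mu\in\mathcal{D}_\alpha(X)$ and $\mu-\mathbf{a}\in\mathcal{S}_\alpha(X)$), so it suffices to show these two conditions cut out the same set of values of $\alpha$.

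For $D(\mu,\mathbf{a})\leq\dim_T(\mu)$, the key observation is that every atomic probability measure lies in $\mathcal{D}_\alpha(X)$ for every $\alpha<1$: the constant sequence $\mathbf{a}_n\equiv\mathbf{a}$ is $d_\alpha$-admissible via the partition $\mathbf{a}_{n,i}^{(k)}=m_i^{(k)}\delta_{x_i^{(k)}}$ and trivial single-vertex paths of zero $\mathbf{M}_\alpha$-cost. Therefore, whenever $\mu\in\mathcal{D}_\alpha$, one also has $\mathbf{a}\in\mathcal{D}_\alpha$, and hence $\mu-\mathbf{a}=1\cdot(\mu-\mathbf{a})\in\mathcal{S}_\alpha$; the inequality follows after taking infima.

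For the reverse $\dim_T(\mu)\leq D(\mu,\mathbf{a})$, assume $\mu-\mathbf{a}=\Lambda(\mu_1-\nu_1)$ with $\mu_1,\nu_1\in\mathcal{D}_\alpha$ and $\Lambda\geq 0$; the task is to produce a $d_\alpha$-admissible Cauchy sequence of atomic probability measures converging to $\mu$. The argument rests on two preliminary facts. First, $\mathcal{D}_\alpha$ is closed under convex combinations of probability measures: given admissible sequences $\{\mathbf{a}_n^{(j)}\}\to\rho_j$ for $j=1,2$ and $\lambda\in[0,1]$, the sequence $\lambda\mathbf{a}_n^{(1)}+(1-\lambda)\mathbf{a}_n^{(2)}$ is admissible via the disjoint union of the two given families of paths with weights rescaled by $\lambda$ and $1-\lambda$, which multiplies the $\mathbf{M}_\alpha$-cost by the bounded factor $\lambda^\alpha+(1-\lambda)^\alpha$. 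Second, the hypothesis rewrites as $\mu+\Lambda\nu_1=\mathbf{a}+\Lambda\mu_1$, exhibiting both sides as the same positive measure of total mass $1+\Lambda$ whose normalization lies in $\mathcal{D}_\alpha$ by the first fact.

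The main obstacle is the final \emph{extraction} step: from admissible approximations of $\mathbf{a}+\Lambda\mu_1$ and of $\Lambda\nu_1$, produce an admissible sequence $\{\mathbf{c}_n\}$ of atomic probability measures converging to the positive difference $\mu=(\mathbf{a}+\Lambda\mu_1)-\Lambda\nu_1$. The idea is to pair each atom of the $\nu_1$-approximation with a mass-matched portion of the $\mu_1$-approximation by a finite-cost transport plan---the extra $\mathbf{M}_\alpha$-cost controlled by $\Lambda^\alpha\, d_\alpha(\mu_1,\nu_1)<\infty$---and cancel that paired mass against the corresponding piece of $\mathbf{a}+\Lambda\mathbf{a}_n^{(1)}$, leaving $\mathbf{c}_n$ as the residue. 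Verifying that $\{\mathbf{c}_n\}$ satisfies the admissibility condition simultaneously at every pair of scales $k<n$, rather than only for consecutive indices, is the technical core; once this is in hand, $\mu\in\mathcal{D}_\alpha$, and consequently $\dim_T(\mu)\leq\frac{1}{1-\alpha}$ for every $\alpha<1$ with $\mu-\mathbf{a}\in\mathcal{S}_\alpha$, completing the proof.
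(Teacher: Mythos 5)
Your proposal takes essentially the same route as the paper: both reduce the theorem to the set identity $\{\alpha<1:\mu-\mathbf{a}\in\mathcal{S}_{\alpha}(X)\}=\{\alpha<1:\mu\in\mathcal{D}_{\alpha}(X)\}$ and then take infima of $\frac{1}{1-\alpha}$. The paper's own proof is in fact terser than yours: it simply notes that $\mathbf{a}\in\mathcal{D}_{\alpha}(X)$ for every $\alpha<1$ and passes directly from one infimum to the other, treating the equivalence as immediate. You are right that only the inclusion $\supseteq$ (i.e.\ $\mu\in\mathcal{D}_{\alpha}\Rightarrow\mu-\mathbf{a}\in\mathcal{S}_{\alpha}$) is genuinely immediate from $\mathbf{a}\in\mathcal{D}_{\alpha}$, and your verification of that fact via the constant admissible sequence is exactly what the paper relies on. The converse inclusion --- from $\mu-\mathbf{a}=\Lambda(\mu_{1}-\nu_{1})$ back to $\mu\in\mathcal{D}_{\alpha}$ --- is the one place where your write-up is honest about a difficulty the paper silently absorbs (it is the content of a cancellation lemma in the cited work \cite{xia6}, not reproved here). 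Your sketch of that step (closure of $\mathcal{D}_{\alpha}$ under convex combinations, the rewriting $\mu+\Lambda\nu_{1}=\mathbf{a}+\Lambda\mu_{1}$, and extraction of the positive difference by pairing and cancelling mass) is the right strategy, but as written it stops at ``once this is in hand,'' so the extraction argument --- in particular the uniform control of the admissibility condition over all pairs $k<n$ --- remains to be carried out if one wants a self-contained proof rather than a citation.
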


\begin{proof}
Since $\mathbf{a\in }\mathcal{D}_{\alpha }\left( X\right) $ for any $\alpha
<1$, we have
\begin{eqnarray*}
D(\mu ,\mathbf{a}) &=&\inf_{\alpha <1}\{\frac{1}{1-\alpha }:\mu -\mathbf{a}%
\in S_{\alpha }\} \\
&=&\inf_{\alpha <1}\{\frac{1}{1-\alpha }:\mu \in \mathcal{D}_{\alpha }\left(
X\right) \}=\dim _{T}\left( \mu \right).
\end{eqnarray*}
\end{proof}

This theorem says that the transport dimension of a probability measure $\mu
$ is the distance from $\mu $ to any atomic measure with respect to the
dimensional distance. In other words, the dimension information of a measure
tells us quantitatively how far the measure is from being an atomic measure.

\section{Measures on a complete doubling metric space}

In this section, we will show that there exists an $\alpha -$optimal
transport path between any two probability measures on a compact doubling
geodesic metric space $X$ whenever $\max \left\{ 1-\frac{1}{m},0\right\}
<\alpha <1$, where $m$ is the Assouad dimension of $X$.

Recall that a \textit{space of homogeneous type} (\cite{coifman}) is a
quasimetric space\ $X$ equipped with a doubling measure $\nu $, which is a
Radon measure on $X$ satisfying
\begin{equation*}
\nu \left( B\left( x,2r\right) \right) \leq C\nu \left( B\left( x,r\right)
\right)
\end{equation*}%
for any ball $B\left( x,r\right) $ in $X$ and for some constant
$C>0$. When $(X,d)$ is a metric space equipped with a doubling
measure $\nu $, then the triple $(X,d,\nu)$ is called a metric
measures space. Recently, many works (see \cite{cheeger},
\cite{heinonen}, etc) have been done on studying analysis on metric
measure spaces, in particular, when the measure $\mu$ is doubling
and satisfying the Poincaré inequality.

In %
\cite{christ6} and \cite{christ7}, Christ introduced a decomposition of a
space of homogeneous type as cubes and proved the following proposition:

\begin{proposition}
Suppose $\left( X,\nu \right) $ is a space of homogeneous type. For any $%
k\in \mathbb{Z}$, there exists a set, at most countable $I_{k}$ and a family
of subsets $Q_{\theta }^{k}\subseteq X$ with $\theta \in I_{k}$, such that
\end{proposition}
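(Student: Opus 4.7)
The plan is to follow the classical construction of Christ \cite{christ6,christ7} for dyadic--like cubes in a space of homogeneous type. First I would fix a parameter $\delta\in(0,1)$ sufficiently small relative to the doubling constant of $\nu$ (for concreteness $\delta\le 1/1000$ suffices for the constants below). For each $k\in\mathbb{Z}$ I would select, by Zorn's lemma or a transfinite greedy procedure, a maximal $\delta^{k}$-separated subset $\{z_{\theta}^{k}\}_{\theta\in I_{k}}\subset X$. Maximality guarantees that $X=\bigcup_{\theta\in I_{k}}B(z_{\theta}^{k},\delta^{k})$, while the separation makes the balls $B(z_{\theta}^{k},\delta^{k}/2)$ pairwise disjoint; the doubling property of $\nu$ then gives that $I_{k}$ is at most countable on each bounded piece of $X$.

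The next step, which is the heart of the construction, is to build a tree of parent--child relations on $\bigsqcup_{k}I_{k}$ \emph{before} defining any cubes. For each $\theta\in I_{k+1}$ I would pick exactly one parent index $p(\theta)\in I_{k}$ such that $d(z_{\theta}^{k+1},z_{p(\theta)}^{k})$ is (approximately) minimal. Iterating, every index $\theta\in I_{k+j}$ acquires a unique ancestor in $I_{k}$ for every $j\ge 0$. I would then define the cubes by declaring, for every point $x\in X$, its scale-$k$ cube to be the one indexed by the ancestor (at level $k$) of the index at a sufficiently fine scale whose center is closest to $x$; equivalently, $Q_{\theta}^{k}$ is the union of those fine-scale Voronoi cells $\{x:z_{\beta}^{k+j}\text{ nearest to }x\}$ whose index $\beta$ is a $j$-fold descendant of $\theta$. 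By construction, (ii) and (iii) --- the nested property and uniqueness of ancestors --- are automatic.

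The remaining properties (the diameter bound $\operatorname{diam}(Q_{\theta}^{k})\le C_{2}\delta^{k}$, the inner ball containment $B(z_{\theta}^{k},c_{1}\delta^{k})\subseteq Q_{\theta}^{k}$, and the null-set property (i)) would then be verified by the triangle inequality together with the geometric series $\sum_{j\ge 0}\delta^{j}=(1-\delta)^{-1}$. Specifically, a point assigned to $Q_{\theta}^{k}$ through a chain of descendants has distance to $z_{\theta}^{k}$ bounded by $\sum_{j\ge 0}\delta^{k+j}=\delta^{k}/(1-\delta)$, giving the diameter estimate; the inner ball estimate follows because a point sufficiently close to $z_{\theta}^{k}$ cannot be re-assigned to a sibling cube as one passes to finer scales, provided $\delta$ was chosen small enough relative to the doubling constant. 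Property (i) holds because the complement of $\bigcup_{\theta}Q_{\theta}^{k}$ lies in the boundary set where ties in the nearest-center selection occur, and doubling makes this set $\nu$-negligible.

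The main obstacle is the tension between properties (ii)--(iii) and property (v): a direct Voronoi partition at each scale is nested neither \emph{a priori} nor up to small error, so one really must fix the combinatorial parent map on indices first and only \emph{then} define the cubes as unions of descendant cells; the quantitative burden is to show that this combinatorial choice does not destroy the geometric ball-containment. Once $\delta$ is taken small enough that the descendant tails $\sum_{j\ge 1}\delta^{j}$ are much smaller than the separation at the coarse scale, everything fits together, and I would at that point cite \cite{christ6,christ7} for the detailed bookkeeping rather than reproduce it.
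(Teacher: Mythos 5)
The paper does not actually prove this proposition --- it is quoted directly from Christ's work \cite{christ6}, \cite{christ7} --- and your sketch is precisely the standard construction from those references (maximal $\delta^{k}$-separated nets at each scale, a parent map fixed on the index tree before any cube is defined, cubes built as unions of descendant cells), so your approach and the paper's coincide. The one point worth flagging is that the null-set property $\nu\left( X\setminus \cup_{\theta}Q_{\theta}^{k}\right) =0$ is the genuinely delicate part of Christ's argument, resting on a quantitative thin-boundary estimate rather than on doubling alone rendering the tie set negligible; but since you explicitly defer that bookkeeping to \cite{christ6}, \cite{christ7}, exactly as the paper does, this is acceptable.
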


\begin{enumerate}
\item $\nu \left( X\setminus \cup _{\theta }Q_{\theta }^{k}\right) =0$, $%
\forall k\in \mathbb{Z}$;

\item for any $k,l,\theta ,\eta $ with $l\leq k$, either $Q_{\theta
}^{k}\cap Q_{\eta }^{l}=\emptyset $ or $Q_{\theta }^{k}\subseteq Q_{\eta
}^{l};$

\item for each $Q_{\eta }^{n+1}$ there exists exactly one $Q_{\theta }^{n}$
(parent of $Q_{\eta }^{n+1}$) such that $Q_{\eta }^{n+1}\subseteq Q_{\theta
}^{n}$;

\item for each $Q_{\theta }^{n}$ there exists at least one $Q_{\eta }^{n+1}$
(child of $Q_{\theta }^{n}$) such that $Q_{\eta }^{n+1}\subseteq Q_{\theta
}^{n}$;
\end{enumerate}

These open subsets of the kind $Q_{\theta }^{k}$ are called dyadic
cubes of generation $k$ due to the analogous between them and the
standard Euclidean dyadic cubes. A useful property regarding such
dyadic cubes is: there exists a point $x_{\theta }^{k}\in X$ for
each cube $Q_{\theta }^{k}$ such that
\begin{equation*}
B\left( x_{\theta }^{k},C_{0}\sigma ^{k}\right) \subseteq Q_{\theta
}^{k}\subseteq B\left( x_{\theta }^{k},C_{1}\sigma ^{k}\right)
\end{equation*}%
for some constants $C_{0},C_{1}$ and $\sigma \in \left( 0,1\right) $.
Moreover, for any $x_{\theta }^{k}$ and $x_{\eta }^{k}$, $d\left( x_{\theta
}^{k},x_{\eta }^{k}\right) \geq \sigma ^{k}$.

Since $\nu $ is a doubling measure, from $\left( 1\right) $, we see that $%
X=\cup _{\theta }\bar{Q}_{\theta }^{k}$, where $\bar{Q}_{\theta }^{k}$
denotes the closure of $Q_{\theta }^{k}$. Then, it is easy to see that there
exists a family of Borel subsets $\left\{ B_{\theta }^{k}\right\} $ with $%
\theta \in I_{k}$ such that $Q_{\theta }^{k}\subseteq B_{\theta
}^{k}\subseteq \bar{Q}_{\theta }^{k}$ with $X=\cup _{\theta }B_{\theta }^{k}$
for each $k$, and $\left\{ B_{\theta }^{k}\right\} $ still satisfy
conditions (2,3,4) above.

A very useful fact is pointed out in \cite[theorem 13.3]{heinonen}: every
complete doubling metric space $\left( X,d\right) $ has a nontrivial
doubling measure on it. Thus, one may also construct a family of disjoint
Borel subsets $\left\{ B_{\theta }^{k}\right\} $ for $\left( X,d\right) $ as
above. Now, for any bounded subset $K$ of $X$, we set $\mathcal{F}_{K}$ to
be the collection of all dyadic cubes $B_{\theta }^{k}$ that has a nonempty
intersection with $K$. It is easy to check that $\mathcal{F}_{K}$ is a
nested collection of cubes as defined in (\ref{nested_collection}).
Moreover, for any $\beta >\dim _{A}\left( K\right) $, from the definition of
Assouad dimension, we see that the cardinality $N_{n}$ of all dyadic cubes
of generation $n$ that intersect with the set $K$ is bounded above by $%
C_{\beta }\left( \sigma ^{n}\right) ^{-\beta }$ for some constant $C_{\beta
}>0$. Thus,
\begin{equation*}
\dim _{M}\mathcal{F}\leq \lim \frac{\log C_{\beta }\left( \sigma ^{n}\right)
^{-\beta }}{\log \frac{1}{\sigma ^{n}}}=\beta .
\end{equation*}%
This shows that $\dim _{M}\mathcal{F}_{K}\leq \dim _{A}\left( K\right) $.

\begin{proposition}
Suppose $\left( X,d\right) $ is a complete geodesic doubling metric space.
If $\mu $ is a probability measure concentrated on a bounded subset $K$ of $%
X $, then
\begin{equation*}
\dim _{M}\left( \mu \right) \leq \dim _{A}\left( K\right) .
\end{equation*}%
If in addition, $\mu $ is Ahlfors regular, then
\begin{equation*}
\dim _{U}\left( \mu \right) \leq \dim _{A}\left( K\right) .
\end{equation*}

\end{proposition}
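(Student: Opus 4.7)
The strategy is to reuse the nested collection $\mathcal{F}_K$ constructed in the paragraph preceding the proposition, together with the bound $\dim_M \mathcal{F}_K \leq \dim_A(K)$ already established there. For the first part, I would first verify that $\mu$ is concentrated on $\mathcal{F}_K$. Since $K \subseteq \bigcup_\theta B_\theta^k$ for every generation $k$, and $\mathcal{F}_K$ consists of exactly those $B_\theta^k$ meeting $K$, we have $K \subseteq \bigcup_{Q \in \mathcal{F}_K,\ \mathrm{gen}(Q)=n} Q$ for each $n$, so $\mu(X \setminus \bigcup_{i} Q_i^n) \leq \mu(X \setminus K) = 0$. Thus $\mu$ is concentrated on $\mathcal{F}_K$, and by definition of $\dim_M(\mu)$ as an infimum we obtain $\dim_M(\mu) \leq \dim_M \mathcal{F}_K \leq \dim_A(K)$.

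For the Ahlfors regular case, I need to upgrade ``concentrated'' to ``evenly concentrated.'' Suppose $\mu$ is $s$-Ahlfors regular, so $c_1 r^s \leq \mu(B(x,r)) \leq c_2 r^s$ for all balls of small enough radius and $x$ in the support. For each cube $Q_\theta^n \in \mathcal{F}_K$ of generation $n$, using the inclusion $B(x_\theta^n, C_0 \sigma^n) \subseteq Q_\theta^n \subseteq B(x_\theta^n, C_1 \sigma^n)$ from Christ's construction gives the two-sided estimate $c_1 C_0^s \sigma^{ns} \leq \mu(Q_\theta^n) \leq c_2 C_1^s \sigma^{ns}$. Summing the upper bound over all $N_n$ cubes of generation $n$ yields $\mu(K) \leq c_2 C_1^s N_n \sigma^{ns}$, hence $N_n \sigma^{ns} \geq \mu(K)/(c_2 C_1^s)$. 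Plugging this into the lower bound gives a constant $\lambda > 0$ (independent of $n$ and $\theta$) such that $\mu(Q_\theta^n) \geq \lambda/N_n$, which is exactly the even-concentration condition (the ``no brothers'' alternative is not even needed here). Therefore $\mu$ is evenly concentrated on $\mathcal{F}_K$, and $\dim_U(\mu) \leq \dim_M \mathcal{F}_K \leq \dim_A(K)$.

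The only subtle point is choosing the doubling measure in the construction of the dyadic cubes. The cubes $B_\theta^k$ come from Christ's decomposition applied to some auxiliary doubling measure on $X$ (which exists by the cited Theorem 13.3 in \cite{heinonen}), not from $\mu$ itself; however, the comparability $B(x_\theta^n,C_0\sigma^n) \subseteq Q_\theta^n \subseteq B(x_\theta^n,C_1\sigma^n)$ is a purely metric statement about the cubes, so Ahlfors regularity of $\mu$ still gives the desired two-sided estimate on $\mu(Q_\theta^n)$. The main obstacle I anticipate is just making sure the constants $C_0, C_1, \sigma$ in Christ's construction can be chosen uniformly so that the comparison between ball and cube measures holds at every generation once $n$ is large enough; this is routine but must be checked. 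Everything else then follows by quoting the already-computed bound $\dim_M \mathcal{F}_K \leq \dim_A(K)$.
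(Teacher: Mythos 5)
Your proposal is correct and follows the paper's own proof exactly: both parts reduce to the observation that $\mu$ is concentrated on the nested collection $\mathcal{F}_{K}$ and to the bound $\dim _{M}\mathcal{F}_{K}\leq \dim _{A}\left( K\right) $ established in the paragraph preceding the proposition. The only difference is that where the paper simply invokes the previously cited fact that an Ahlfors regular measure concentrated on a nested collection is evenly concentrated on it, you verify this directly from the ball inclusions $B\left( x_{\theta }^{n},C_{0}\sigma ^{n}\right) \subseteq Q_{\theta }^{n}\subseteq B\left( x_{\theta }^{n},C_{1}\sigma ^{n}\right) $ of the Christ cubes — a harmless expansion (modulo the routine caveat that the lower Ahlfors bound is applied at points of the support).
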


\begin{proof}
Since $\mu $ is concentrated on $K$, we have $\mu $ is concentrated on the
associated nested collection $\mathcal{F}_{K}$. Thus,%
\begin{equation*}
\dim _{M}\left( \mu \right) \leq \dim _{M}\mathcal{F}_{K}\leq \dim
_{A}\left( K\right) .
\end{equation*}%
When $\mu $ is Ahlfors regular, $\mu $ is evenly concentrated on $\mathcal{F}%
_{K}$, thus $\dim _{U}\left( \mu \right) \leq \dim _{A}\left( K\right) $.
\end{proof}

In particular, we have

\begin{theorem}
Suppose $X$ is a complete geodesic doubling metric space with Assouad
dimension $m$, and $\mu $ is any probability measure on $X$ with a compact
support. Let $1-\frac{1}{m}<\alpha <1$. Then,

\begin{enumerate}
\item $\mu \in \mathcal{D}_{\alpha }\left( X\right) $ if $\alpha >0$. In
particular, if in addition $X$ is compact, then $\mathcal{D}_{\alpha }\left(
X\right) =\mathcal{P}_{\alpha }\left( X\right) =\mathcal{P}\left( X\right) $.

\item $\mu \in \mathcal{D}_{\alpha }\left( X\right) $ if $\mu $ is Ahlfors
regular.
\end{enumerate}
\end{theorem}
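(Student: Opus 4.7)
The plan is to derive this theorem as an immediate consequence of the preceding Proposition, combined with Theorems \ref{positive_minkowski} and \ref{negative}. The key arithmetic observation is that the hypothesis $1 - \frac{1}{m} < \alpha < 1$ is equivalent to $\frac{1}{1-\alpha} > m$, which is precisely the threshold required to invoke the dimension-based admissibility criteria established earlier.

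For part (1), I would argue as follows. Let $K = \mathrm{supp}(\mu)$, which is bounded since it is compact. Because $K$ is a subset of a doubling space whose Assouad dimension is $m$, the monotonicity of Assouad dimension under taking subsets gives $\dim_A(K) \leq m$. The preceding Proposition then yields
\begin{equation*}
\dim_M(\mu) \leq \dim_A(K) \leq m < \frac{1}{1-\alpha}.
\end{equation*}
Since $\alpha > 0$, Theorem \ref{positive_minkowski} applies directly to conclude $\mu \in \mathcal{D}_\alpha(X)$. For the "in particular" clause, when $X$ is compact every probability measure on $X$ has compact support (namely, a subset of $X$ itself), so what we just proved shows $\mathcal{P}(X) \subseteq \mathcal{D}_\alpha(X)$; the reverse inclusions $\mathcal{D}_\alpha(X) \subseteq \mathcal{P}_\alpha(X) \subseteq \mathcal{P}(X)$ are automatic from the definitions of $\mathcal{D}_\alpha$, $\mathcal{P}_\alpha$ (the completion of $\mathcal{A}(X)$) and the fact that its elements can be identified with probability measures when $\alpha < 1$.

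For part (2), the Ahlfors regularity hypothesis upgrades the conclusion of the preceding Proposition from $\dim_M$ to $\dim_U$: the same chain of inequalities gives
\begin{equation*}
\dim_U(\mu) \leq \dim_A(K) \leq m < \frac{1}{1-\alpha}.
\end{equation*}
Now Theorem \ref{negative} applies (which does not require $\alpha > 0$) to yield $\mu \in \mathcal{D}_\alpha(X)$, covering the case $0 < \alpha < 1$ without a positivity assumption on $\alpha$ in the proof itself, though the statement only claims it for $1 - \frac{1}{m} < \alpha < 1$.

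Since each step is a direct invocation of an already-established result, there is no real obstacle; the only thing worth verifying carefully is that the Assouad dimension is monotone under subsets, which follows because any covering function of $X$ restricts to a covering function of $K$. The conceptual content of the theorem is already absorbed into the preceding Proposition (bounding $\dim_M$ and $\dim_U$ by $\dim_A$ of the support via Christ's dyadic cube decomposition) and into the two admissibility theorems, so this final statement is essentially a bookkeeping corollary.
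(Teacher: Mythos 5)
Your proof is correct and follows essentially the same route as the paper: bound $\dim_M(\mu)$ (resp.\ $\dim_U(\mu)$) by $\dim_A(\mathrm{supp}\,\mu)\leq \dim_A(X)=m<\frac{1}{1-\alpha}$ via the preceding proposition, then invoke the dimension-based admissibility criteria. The only cosmetic difference is that the paper passes through the transport dimension and Theorem \ref{main theorem} rather than citing Theorems \ref{positive_minkowski} and \ref{negative} directly, which amounts to the same argument.
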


\begin{proof}
Let $K$ be the support of $\mu $. Then, $\dim _{M}\left( \mu \right) \leq
\dim _{A}\left( K\right) \leq \dim _{A}\left( X\right) =m$. By theorem \ref%
{main theorem}, $\dim _{T}\left( \mu \right) \leq \max \left\{ 1,\dim
_{M}\left( \mu \right) \right\} \leq \max \left\{ 1,m\right\} $. Therefore,
for any $\max \left\{ 1-\frac{1}{m},0\right\} <\alpha <1$, we have $\frac{1}{%
1-\alpha }>\max \left\{ 1,m\right\} \geq \dim _{T}\left( \mu \right) $, and
thus $\mu \in \mathcal{D}_{\alpha }\left( X\right) $. When $\mu $ is Ahlfors
regular on $X$, we have $\dim _{T}\left( \mu \right) \leq \dim _{U}\left(
\mu \right) \leq \dim _{A}\left( K\right) \leq m$. Thus, if $\frac{1}{%
1-\alpha }>m$, then $\mu \in \mathcal{D}_{\alpha }\left( X\right) $.
\end{proof}

Thus, by proposition \ref{p_alpha_geodesic}, we have

\begin{corollary}
Suppose $X$ is a compact geodesic doubling metric space with Assouad
dimension $m$. Then, the space $\left( \mathcal{P}\left( X\right) ,d_{\alpha
}\right) $ of probability measures on $X$ is a complete geodesic metric
space whenever $\max \left\{ 1-\frac{1}{m},0\right\} <\alpha <1$. In other
words, there exists an $\alpha -$optimal transport path between any two
probability measures on $X$.
\end{corollary}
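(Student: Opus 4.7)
The plan is to deduce this directly from the two results immediately preceding it. Proposition \ref{p_alpha_geodesic} tells us that for any complete geodesic metric space $X$ and any $\alpha \in [0,1)$, the completed space $(\mathcal{P}_{\alpha}(X), d_{\alpha})$ is a complete geodesic metric space. Thus it suffices to show that, under the hypotheses of the corollary, the set $\mathcal{P}_{\alpha}(X)$ coincides with the full space $\mathcal{P}(X)$ of probability measures on $X$.

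First I would observe that since $X$ is compact, it is in particular complete, so Proposition \ref{p_alpha_geodesic} applies and gives us that $(\mathcal{P}_{\alpha}(X), d_{\alpha})$ is a complete geodesic metric space. Next, I would invoke the preceding theorem: because $X$ is compact, every probability measure $\mu \in \mathcal{P}(X)$ trivially has compact support (namely a closed subset of $X$ itself). Combined with the assumption $\max\{1 - 1/m, 0\} < \alpha < 1$, part (1) of that theorem gives $\mathcal{D}_{\alpha}(X) = \mathcal{P}_{\alpha}(X) = \mathcal{P}(X)$.

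Putting these two ingredients together, $(\mathcal{P}(X), d_{\alpha}) = (\mathcal{P}_{\alpha}(X), d_{\alpha})$ is a complete geodesic metric space. In particular, between any $\mu^{+}, \mu^{-} \in \mathcal{P}(X)$ there is a geodesic in $(\mathcal{P}_{\alpha}(X), d_{\alpha})$, which by the definition preceding Proposition \ref{p_alpha_geodesic} is precisely an $\alpha$-optimal transport path from $\mu^{+}$ to $\mu^{-}$.

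There is no real obstacle here; the corollary is a bookkeeping combination of the preceding theorem (which identifies $\mathcal{P}_{\alpha}$ with all of $\mathcal{P}$ in the stated dimensional regime) and Proposition \ref{p_alpha_geodesic} (which gives the geodesic structure on $\mathcal{P}_{\alpha}$). The only minor point to check is the automatic compactness of the support of $\mu$ when $X$ itself is compact, which justifies the application of the theorem to every $\mu \in \mathcal{P}(X)$.
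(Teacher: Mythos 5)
Your proposal is correct and follows exactly the paper's route: the paper derives this corollary by combining part (1) of the preceding theorem (which, for compact $X$ and $\alpha$ in the stated range, gives $\mathcal{D}_{\alpha}(X)=\mathcal{P}_{\alpha}(X)=\mathcal{P}(X)$) with Proposition \ref{p_alpha_geodesic}. Your additional remarks about compactness implying completeness and every measure having compact support are exactly the small checks needed to make that combination legitimate.
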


\end{document}